\newcommand{\kk}{\mathbb{K}}
\newcommand{\m}{\mathbf{m}}
\newcommand{\Q}{\mathcal{Q}}
\newcommand{\cD}{\mathcal{D}}
\newcommand{\cJ}{\mathcal{J}}
\newcommand{\cR}{\mathcal{R}}
\newcommand{\R}{\mathbb{R}}
\newcommand{\Z}{\mathbb{Z}}
\newcommand{\A}{\mathcal{A}}
\newcommand{\B}{\mathcal{B}}
\newcommand{\cS}{\mathcal{S}}
\DeclareMathOperator{\rk}{rk}
\DeclareMathOperator{\syz}{syz}
\newcommand{\reg}{\mbox{\emph{reg} }}
\newcommand{\lt}{L_2^{\text{\tiny{trip}}}}
\newcommand{\V}{\mathcal{V}}
\newcommand{\dx}{\frac{\partial}{\partial x}}
\newcommand{\dy}{\frac{\partial}{\partial y}}
\newtheorem{thm}{Theorem}[section]
\newtheorem{cor}[thm]{Corollary}
\newtheorem{lem}[thm]{Lemma}
\newtheorem{prop}[thm]{Proposition}
\theoremstyle{definition}
\newtheorem{defn}[thm]{Definition}
\newtheorem{exm}[thm]{Example}
\newtheorem{remark}[thm]{Remark}
\newtheorem{ques}[thm]{Question}
\begin{document}

\title{A homological characterization for freeness of multi-arrangements}
\author{Michael DiPasquale}
\address{Michael DiPasquale\\     
	Department of Mathematics\\     
	Oklahoma State University\\     
	Stillwater\\
	OK \ 74078-1058\\     
	USA}     
\email{mdipasq@okstate.edu}
\thanks{\noindent 2010 \textit{Mathematics Subject Classification.}
	Primary 14Q10, Secondary 13P20, 13D02, 13N15\\
\textit{Key words and phrases:} multi-arrangements, logarithmic derivations, k-formality}
\urladdr{\url{http://math.okstate.edu/people/mdipasq/}}

\maketitle

\begin{abstract}
Building on work of Brandt and Terao in their study of $k$-formality, we introduce a co-chain complex associated to a multi-arrangement and prove that its cohomologies determine freeness of the associated module of multi-derivations.  This provides a new homological method for determining freeness of arrangements and multi-arrangements.  We work out many applications of this homological method.  For instance, we prove that if a multi-arrangement is free then the underlying arrangement is $k$-formal for all $k\ge 2$.  We also use this method to completely characterize freeness of certain families of multi-arrangements in moduli, showcasing how the geometry of multi-arrangements with the same intersection lattice may have considerable impact on freeness.  New counter-examples to Orlik's conjecture also arise in connection to this latter analysis.


\end{abstract}

\setcounter{tocdepth}{1}
\tableofcontents

\section{Introduction}
A central hyperplane arrangement, which we will denote by $\A$, is a union of hyperplanes passing through the origin in a vector space $V\cong\kk^\ell$, where $\kk$ is a field.  Write $S$ for the symmetric algebra of $V^*$, which is isomorphic to a polynomial ring in $\ell$ variables.  Then $\A$ is the union of the zero-locus of linear forms $\alpha_H$, one for each hyperplane $H$ in $\A$.  The module of logarithmic $\A$-derivations, denoted $D(\A)$, consists of derivations $\theta\in \mbox{Der}_\kk(S)$ satisfying $\theta(\alpha_H)\in \alpha_H S$ for every $H\in\A$.  Study of this module was initiated by Saito~\cite{S80}; it is of particular interest to know when $D(\A)$ is a free $S$-module.  In this case $\A$ is called a free arrangement.  One of the central open questions in the theory of hyperplanes, due to Terao, is whether freeness of an arrangement is combinatorial, meaning that it can be detected from the lattice of intersections.

Let $\m:\A\rightarrow \Z_{>0}$ be a function, called a multiplicity, associating to each hyperplane $H$ a positive integer $\m(H)$; the pair $(\A,\m)$ is called a multi-arrangement.  The module of derivations of $(\A,\m)$, denoted $D(\A,\m)$, consists of those derivations $\theta\in \mbox{Der}_\kk(S)$ satisfying $\theta(\alpha_H)\in \alpha_H^{\m(H)}S$ for every $H\in\A$.  If $D(\A,\m)$ is a free $S$-module we say $(\A,\m)$ free and $\m$ is a free multiplicity of $\A$.  Due to a criterion stated by Ziegler~\cite{ZieglerMulti} and later improved by Yoshinaga~\cite{YoshCharacterizationFreeArr}, freeness of multi-arrangements is closely linked to freeness of arrangements.

There have been major advances in the understanding of multi-arrangements during the last decade.  The characteristic polynomial has been defined for multi-arrangements by Abe, Terao, and Wakefield~\cite{TeraoCharPoly} and they show that Terao's factorization theorem holds for this characteristic polynomial.  
Moreover, the addition-deletion theorem has also been extended by Abe, Terao, and Wakefield to multi-arrangements~\cite{EulerMult}.  This improved theory of multi-arrangements has recently led to remarkable progress in understanding freeness of arrangements and of Terao's question in particular~\cite{AbeDivisional,AbeDeletion}.

In this paper we add to the list of available tools for studying multi-arrangements by introducing a homological characterization for freeness.  The characterization involves building a co-chain complex which we denote $\cD^\bullet(\A,\m)$ from modules constructed by Brandt and Terao~\cite{BrandtTerao} to study $k$-formality (see Definition~\ref{defn:DerivationComplex} for details).  Chain complexes having very similar properties to $\cD^\bullet(\A,\m)$ appear in the theory of algebraic splines~\cite{Homology,LCoho}; applying techniques of Schenck and Stiller~\cite{Spect,CohVan} yields our main result, stated below.

\begin{thm}[Homological characterization of freeness]\label{thm:Free}
The multi-arrangement $(\A,\m)$ is free if and only if $H^k(\cD^\bullet(\A,\m))=0$ for $k> 0$.  Moreover, $D(\A,\m)$ is locally free if and only if $H^k(\cD^\bullet(\A,\m))$ has finite length for all $k>0$.
\end{thm}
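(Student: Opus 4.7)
My plan is to carry the local-cohomology and depth argument that Schenck and Stiller developed for spline complexes over to the multi-arrangement setting. The complex $\cD^\bullet(\A,\m)$ comes equipped with a natural identification of $D(\A,\m)$ as the kernel of the initial differential (equivalently, as $H^0$ after a shift), while the remaining terms are assembled from the Brandt-Terao derivation modules attached to the flats of $\A$ of increasing codimension. The key structural feature to exploit is that these higher terms, being pulled back from derivations on proper flats, have good depth properties.

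The first step is to break $\cD^\bullet(\A,\m)$ into short exact sequences of consecutive syzygies. Each short exact sequence, combined with the depth lemma, yields a lower bound on $\mathrm{depth}(D(\A,\m))$ in terms of the depths of the intermediate terms and of the interposed cohomology modules $H^k(\cD^\bullet(\A,\m))$. Splicing these bounds together, vanishing of all $H^k$ with $k>0$ forces $\mathrm{depth}(D(\A,\m))=\ell$, and hence freeness by Auslander-Buchsbaum. For the converse, assuming $D(\A,\m)$ is free, a direct chase through the two hyper-cohomology filtrations of $\cD^\bullet(\A,\m)$ (one by rows, one by columns of an associated double complex) should show that each $H^k$ with $k>0$ must vanish.

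For the local freeness statement, I would localize the entire complex at a prime $\mathfrak{p}\ne \mathfrak{m}$ and apply the first part pointwise: since localization is exact and commutes with cohomology of the complex, $D(\A,\m)_\mathfrak{p}$ is free over $S_\mathfrak{p}$ if and only if $H^k(\cD^\bullet(\A,\m))_\mathfrak{p}=0$ for every $k>0$. Hence local freeness of $D(\A,\m)$ is equivalent to each $H^k(\cD^\bullet(\A,\m))$ being supported only on the irrelevant maximal ideal, which for finitely generated graded $S$-modules is the same as having finite length.

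The main technical obstacle will be verifying that the Brandt-Terao terms in $\cD^\bullet(\A,\m)$ really do possess the expected depth behavior along every localization, and that the identification of $D(\A,\m)$ with the relevant (co)homology of the complex is compatible with localization at arbitrary primes. Once those compatibilities are secured, the Schenck-Stiller template should run essentially formally; what will take the most care is translating the bookkeeping used in the spline setting into the language of derivation modules and flats for multi-arrangements so that the depth bookkeeping stays tight at every splice.
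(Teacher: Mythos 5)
Your overall template (Schenck--Stiller) is the right one, and your forward direction is fine in outline: if $H^k(\cD^\bullet(\A,\m))=0$ for $k>0$, splitting the exact complex into short exact sequences and splicing depth bounds does give $\mbox{depth}(D(\A,\m))=\ell$ and hence freeness by Auslander--Buchsbaum --- \emph{provided} you actually prove that each term $\cD^k$ has depth $\ell-k$. That is not a formality you can defer: it is the content of Proposition~\ref{prop:CMCodimK}, proved by projecting $\A_X$ to an essential arrangement, exhibiting a regular sequence, and controlling associated primes under the flat extension, together with the locality statement of Lemma~\ref{lem:local}.

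The genuine gap is in the converse and in the local-freeness statement. Freeness of $H^0$ does \emph{not} formally imply vanishing of the higher cohomology of a complex whose terms are Cohen--Macaulay of codimension $k$: over $S=\kk[x,y]$ the two-term complex $S\xrightarrow{\,y\,}S/(x)$ has $H^0\cong S$ free but $H^1\cong S/(x,y)\neq 0$. So no ``chase through the two hyper-cohomology filtrations'' can succeed without an additional input, and indeed the finite-length half of the theorem already tells you it cannot: locally free but non-free $(\A,\m)$ have free localizations away from $\mathfrak m$ yet nonzero (finite-length) $H^k$. The missing input is the support condition that $H^k(\cD^\bullet(\A,\m))$ is supported in codimension at least $k+2$ for $k>0$, which is exactly the second hypothesis of Theorem~\ref{thm:Schenck}. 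Verifying it is where the arrangement theory enters: one localizes at a prime $P$ of codimension $\le k+1$, uses Lemma~\ref{lem:local} to identify $\cD^\bullet(\A,\m)_P$ with $\cD^\bullet(\A_{X(P)},\m_{X(P)})_P$, and observes that at $P=I(X)$ with $X\in L_{k+1}$ the relevant three-term piece localizes to the short exact sequence built into Definition~\ref{defn:Dk}, hence is exact; at smaller codimension the localized differential $\delta^{k-1}_P$ is an isomorphism. Your proposal never isolates this condition, and your treatment of local freeness (``apply the first part pointwise at each $\mathfrak p\neq\mathfrak m$'') silently assumes the prime-by-prime biconditional whose hard direction needs the same support argument over $S_P$; as written it restates the second half of the theorem rather than proving it. Supply the codimension-$(k+2)$ support verification via locality and the defining exact sequences, and then either quote Theorem~\ref{thm:Schenck} or run its hyperExt/local-cohomology spectral sequence honestly; without that step the argument does not close.
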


Weaker versions of this statement have been proved recently and used to classify free multiplicities on several rank three arrangements~\cite{GSplinesGraphicArr,A3MultiBraid,X3}.  For simple arrangements, the forward direction of the first statement in Theorem~\ref{thm:Free} follows from work of Brandt and Terao~\cite{BrandtTerao}.  Homological methods are not new in the study of freeness of arrangements; besides the aforementioned work of Brandt and Terao, Yuzvinsky developed and studied the theory of cohomology of sheaves of differentials on arrangement lattices to great effect in~\cite{YuzCohoLocal,YuzFormal,YuzModuli}.  While we will not attempt to generalize this framework to multi-arrangements, Yuzvinsky's work, along with Brandt and Terao's, is an important motivation for this paper.

The remainder of the paper is devoted to applications of this homological criterion.  In \S~\ref{sec:ChainComplex} we extend a combinatorial bound on projective dimension of $D(\A,\m)$ due to Kung and Schenck in the case of simple arrangements.  In \S~\ref{sec:Formal} we elucidate the connection to $k$-formality and use the homological characterization of Theorem~\ref{thm:Free} to extend a result of Brandt and Terao~\cite{BrandtTerao} to multi-arrangements in Corollary~\ref{cor:MultifreeFormal}.



Following the initial applications of this homological characterization of freeness, we describe in \S~\ref{sec:Computations} how the chain complex $\cD^\bullet(\A,\m)$ can be concretely computed.  We have implemented this construction in the computer algebra system Macaulay2~\cite{M2}.  The code for constructing the chain complex, as well as a file working through many of the examples in this paper, may be found on the author's website: \href{https://math.okstate.edu/people/mdipasq/Research/Research.html}{math.okstate.edu/$\sim$mdipasq}.  In \S~\ref{sec:Computations} we also explicitly work out the structure of $\cD^\bullet(\A,\m)$ for graphic arrangements and show that Theorem~\ref{thm:Free} recovers the main result of~\cite{GSplinesGraphicArr}.

In \S~\ref{sec:TF2}, we study a class of arrangements which we call $TF_2$ arrangements; these are formal arrangements whose relations of length three are linearly independent.  We believe this study is well-motivated by the interesting behavior of multi-$TF_2$ arrangements in moduli as well as additional counter-examples to Orlik's conjecture which arise in the process.  We illustrate this in \S~\ref{sec:Examples} before proceeding to the body of the paper.  If $\A$ is a $TF_2$ arrangement, freeness of $(\A,\m)$ is determined by the vanishing of the single cohomology module $H^1(\cD^\bullet(\A,\m))$, making these arrangements well-suited to the homological methods afforded by Theorem~\ref{thm:Free}.  We show that a $TF_2$ arrangement is free if and only if it is supersolvable.  We completely classify free multiplicities on non-free $TF_2$ arrangements in Proposition~\ref{prop:H2Pres} and Theorem~\ref{thm:FreeMultNonFreeTF2}.  Moreover, we show that free multiplicities of free $TF_2$ arrangements can be determined in a combinatorial fashion from the exponents of its rank two sub-arrangements in Theorem~\ref{thm:FreeMultFreeTF2}.

We also give in \S~\ref{sec:SyzygiesTeraoConj} a syzygetic criterion for freeness of a multi-arrangement of lines, generalizing a criterion for freeness of $A_3$ multi-arrangements from~\cite{A3MultiBraid}.  Specializing to simple line arrangements gives an equivalent formulation of Terao's question for line arrangements, phrased in terms of syzygies of a certain module presented by a matrix of linear forms (Question~\ref{ques}).

\textbf{Acknowledgements:} I am indebted to Stefan Tohaneanu for pointing out his paper~\cite{StefanFormal}, which provided the inspiration to generalize the homological arguments in~\cite{GSplinesGraphicArr}.  The current work would not be possible without the collaboration of Chris Francisco, Jeff Mermin, Jay Schweig, and Max Wakefield on previous papers~\cite{A3MultiBraid,X3}.  Takuro Abe has been a consistent source of inspiring discussions and many patient explanations via e-mail.  Computations in the computer algebra system Macaulay2~\cite{M2} were very useful at all stages of research.

\subsection{Examples}\label{sec:Examples}
In this section we illustrate results which can be obtained by applying the homological criterion for freeness (Theorem~\ref{thm:Free}).  The three examples in this section are $TF_2$ arrangements, the definition and analysis of which appears in \S~\ref{sec:TF2}.

\begin{exm}\label{ex:multiplicitiessupersolvable}
Consider the line arrangement $\A(\alpha,\beta)$ defined by $xyz(x-\alpha z)(x-\beta z)(y-z)$ where $\alpha,\beta\in\kk$.  See Figure~\ref{fig:multiplicitiessupersolvable} for a projective picture of this arrangement over $\R$.  Clearly if $\alpha\neq\beta,\alpha\neq0,$ and $\beta\neq 0$, then the intersection lattice $L(\A(\alpha,\beta))$ does not change.  In fact, the arrangements $\A(\alpha,\beta)$ with $\alpha\neq\beta,\alpha\neq0,$ and $\beta\neq 0$ comprise the moduli space of this lattice (see Appendix~\ref{app:Moduli} for a brief summary of the moduli space of a lattice).  It is easily checked that $\A(\alpha,\beta)$ is supersolvable.

We will see in Theorem~\ref{thm:FreeMultFreeTF2} that the freeness of the multi-arrangement $(\A(\alpha,\beta),\m)$ can be determined if the exponents of the rank two sub multi-arrangements are known.  Write $\m(x),\m(y),\ldots$ for the multiplicity assigned to, respectively, $x=0,y=0,\ldots$.  There are two rank-two sub multi-arrangements of $(\A(\alpha,\beta),\m)$ defined by
\[
\begin{array}{rl}
\tilde{X}_1= & y^{\m(y)}z^{\m(z)}(y-z)^{\m(y-z)}\mbox{ and} \\
\tilde{X}_2= & x^{\m(x)}z^{\m(z)}(x-\alpha z)^{\m(x-\alpha z)}(x-\beta z)^{\m(x-\beta z)}.\\ 
\end{array}
\]
In Example~\ref{ex:multiplicitiessupersolvable0}, we deduce from Theorem~\ref{thm:FreeMultFreeTF2} that $(\A(\alpha,\beta),\m)$ is free if and only if either $\tilde{X}_1$ or $\tilde{X}_2$ has $\m(z)$ as an exponent.  This property is sensitive to the characteristic of $\kk$; we will assume in the remainder of this example that $\kk$ has characteristic zero.

Write $M_1=\m(y)+\m(z)+\m(y-z)$ and $M_2=\m(x)+\m(z)+\m(x-\alpha z)+\m(x-\beta z)$.  If $\kk$ has characteristic zero, the exponents of the multi-arrangement $\tilde{X}_1$ are known~\cite{Wakamiko}; $\m(z)$ is an exponent if and only if $M_1\le 2\m(z)+1$.  So we assume $M_1>2\m(z)+1$ and determine when $\tilde{X}_2$ has an exponent of $\m(z)$.

It is not difficult to show that if $\m(z)$ is an exponent of $\tilde{X}_2$, then $\m(z)=\max\{\m(x),\m(z),\m(x-\alpha z),\m(x-\beta z)\}$ (see Lemma~\ref{lem:Boolean}).  From~\cite{DerProjLine} it is known that $\m(z)$ is an exponent of $\tilde{X}_2$ if $M_2\le 2\m(z)+1$.  Moreover it follows from~\cite[Theorem~1.6]{AbeFreeness3Arrangements} that $\m(z)$ is not an exponent of $\tilde{X}_2$ if $M_2>2+2\m(z)$ (this also requires that $\kk$ has characteristic zero).  However if $M_2=2+2\m(z)$ then it is only known that $\m(z)$ is not an exponent of $\tilde{X}_2$ for \textit{generic} choices of $\alpha$ and $\beta$ (at least if $\kk=\mathbb{C}$~\cite{DerProjLine}).

To see what can happen if $M_2=2+2\m(z)$, consider the multi-arrangement $(\A(\alpha,\beta),\m)$ defined by
\[
x^3y^3z^3(x-\alpha z)(x-\beta z)(y-z)^3.
\]
Then $\tilde{X}_1=x^3y^3(y-z)^3$ and $\tilde{X}_2=x^3z^3(x-\alpha z)(x-\beta z)$.  The exponents of $\tilde{X}_1$ are $(4,5)$, while the exponents of $\tilde{X}_2$ are $(4,4)$ if $\alpha\neq-\beta$ and $(3,5)$ if $\alpha=-\beta$ (see~\cite{ZieglerMulti} or Lemma~\ref{lem:nn11exp}).  By Theorem~\ref{thm:FreeMultFreeTF2}, $(\A(\alpha,\beta),\m)$ is free if and only if $\alpha=-\beta$.

As a consequence, we see that for a fixed multiplicity $\m$ the free multi-arrangements $(\A,\m)$ in the moduli space of $L(\A)$ can form a non-empty proper Zariski closed subset, even when $\A$ is supersolvable over a field of characteristic zero.  In contrast, Yuzvinsky has shown that free arrangements form a Zariski open subset of the moduli space of $L(\A)$~\cite{YuzModuli}.
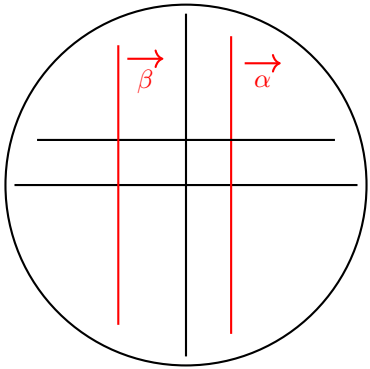
\begin{figure}
	
	\begin{tikzpicture}[scale=.6]
	
	\draw[thick] (0,0) circle (4cm);
	\draw[thick] (-3.8,0)--(3.8,0);
	\draw[thick] (0,-3.8)--(0,3.8);
	\draw[red,thick] (1,3.3)--(1,-3.3);
	\draw[thick] (3.3,1)--(-3.3,1);
	\draw[red,thick] (-1.5,-3.1)--(-1.5,3.1);
	\draw[->,red,thick] (1.3,2.7)--(2.1,2.7);
	\draw[->,red,thick] (-1.3,2.8)--(-.5,2.8);
	\node[red] at (1.7,2.3) {$\alpha$};
	\node[red] at (-.9,2.3) {$\beta$};
	
	\end{tikzpicture}
	\caption{A projective picture emphasizing the moduli in Example~\ref{ex:multiplicitiessupersolvable}}\label{fig:multiplicitiessupersolvable}
\end{figure}
\end{exm}

\begin{exm}\label{ex:TotallyNonFree}
Let $\A(\alpha,\beta)$ be the arrangement with defining polynomial $\Q(\A(\alpha,\beta))=xyz(x-\alpha y)(x-\beta y)(y-z)(x-z)$, where $\alpha,\beta\in\kk$.  See Figure~\ref{fig:TotallyNonFree} for a projective drawing of this arrangement over $\mathbb{R}$.  It is straightforward to show that if $\alpha\neq 1,\beta\neq 1,$ and $\alpha\neq\beta$, then the lattice $L(\A(\alpha,\beta))$ does not change.  Just as in Example~\ref{ex:multiplicitiessupersolvable}, these arrangements comprise the moduli space of this lattice.  It is easily checked that $\A(\alpha,\beta)$ is not free for any choice of $\alpha,\beta$ since its characteristic polynomial does not factor.

We will see in Theorem~\ref{thm:FreeMultNonFreeTF2} that if $\kk$ has characteristic $0$, the multi-arrangement $(\A(\alpha,\beta),\m)$ is free if and only if its defining equation has the form
\[
\Q(\A,\m)=x^ny^nz^n(x-\alpha y)(x-\beta y)(y-z)(x-z),
\]
where $n>1$ is an integer and $\alpha^{n-1}=\beta^{n-1}\neq 1$.  In particular, if $\alpha/\beta$ is not a root of unity in $\kk$, then $\A$ is totally non-free, meaning it does not admit any free multiplicities.  For instance, if $\kk=\R$, then $\A$ admits a free multiplicity if and only if $\alpha=-\beta$ (the free multiplicities occur precisely when $n>1$ is odd).  Since the arrangements $\A(\alpha,\beta)$ with $\alpha\neq 1,\beta\neq 1,$ and $\alpha\neq\beta$ all have the same intersection lattice, this shows that the property of being totally non-free is not combinatorial.  In contrast, Abe, Terao, and Yoshinaga have shown that the property of being totally free is combinatorial~\cite{TeraoTotallyFree}.

\begin{figure}[htp]
	
	\begin{tikzpicture}[scale=.6]
	
	\draw[thick] (0,0) circle (4cm);
	\draw[thick] (-3.8,0)--(3.8,0);
	\draw[thick] (0,-3.8)--(0,3.8);
	\draw[thick] (1,3.3)--(1,-3.3);
	\draw[thick] (3.3,1)--(-3.3,1);
	\draw[red,thick] (-1.2*1.3,2.4*1.3)--(1.2*1.3,-2.4*1.3);
	\draw[red,thick] (-2*1.7,-.7*1.7)--(2*1.7,.7*1.7);
	\draw[->,red,thick] (-1.6,2.6) to [out=200, in=70] (-2.7,1.3);
	\draw[->,red,thick] (-2.8,-1.3) to [out=300, in=180] (-1.5,-2.3);
	\node[red] at (-2,1.8) {$\alpha$};
	\node[red] at (-2,-1.7) {$\beta$};
	
	\end{tikzpicture}
	\caption{A projective picture emphasizing the moduli in Example~\ref{ex:TotallyNonFree}}\label{fig:TotallyNonFree}
\end{figure}
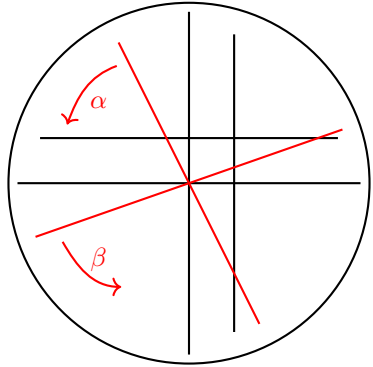
\end{exm}

\begin{exm}\label{ex:RestrictionHighPdim}
Let $S=\kk[x_0,\ldots,x_r]$ and let $\A\subset\kk^{r+1}$ be the arrangement defined by
\[
\Q(\A)=x_0\left(\prod\limits_{i=1}^r (x^2_i-x^2_0) \right) (x_1-x_2)\cdots (x_{r-1}-x_r)(x_r+x_1)
\]
Let $H$ be the hyperplane defined by $x_0$.  In Proposition~\ref{prop:Xr}, we will show that $\A$ is free using Yoshinaga's theorem~\cite{YoshCharacterizationFreeArr} and Theorem~\ref{thm:FreeMultNonFreeTF2}.  Moreover, we will prove that $\mbox{pdim}(D(\A^H))=r-3$, the largest possible.  In fact, we will show more: the minimal free resolution of $D(\A^H)$ is a truncated and shifted Koszul complex, so it is linear.  As with the previous two examples, the key to our analysis is that the restriction $\A^H$ is a $TF_2$ arrangement, which is particularly well suited to the homological methods we introduce in this paper.

This family of examples is interesting because it adds to a short list of arrangements known to fail Orlik's conjecture.  This conjecture states that $\A^H$ is free whenever $\A$ is free~\cite{OrlikArr}.  The only counterexamples to this conjecture of which we are aware appear in work of Edelman and Reiner~\cite{ReinerCounterEx,ReinerAnBn}.  For the small ranks that we have been able to compute, our examples differ from theirs in that $D(\A^H)$ for the examples of Edelman and Reiner seems to be always `almost free' - that is $D(\A^H)$ has only one more generator than the rank of $\A^H$ and there is only a single relation among these generators.  This latter behavior has been studied in a recent article of Abe~\cite{AbeDeletion}.
\end{exm}

\section{Preliminaries}\label{sec:Preliminaries}
Fix a field $\kk$, let $V$ be a $\kk$-vector space of dimension $\ell$, and $V^*$ the dual vector space.  Set $S=\mbox{Sym}(V^*)$, the symmetric algebra on $V^*$.  A hyperplane arrangement $\A\subset V$ is a union of hyperplanes $H$ defined by the vanishing of the affine linear form $\alpha_H\in V^*$; the \textit{defining polynomial} of $\A$ is $\Q(\A)=\prod_{H\in\A} \alpha_H$.  We will consistently abuse notation and write $H\in\A$ if $H$ is one of the hyperplanes whose union forms $\A$.  Moreover, we will write $|\A|$ for the number of hyperplanes in $\A$.

The \textit{rank} of a hyperplane arrangement $\A\subset V$ is $r=r(\A):=\dim V-\dim(\cap_{H\in \A} H)$.  The arrangement $\A\subset V$ is called \textit{essential} if $r(\A)=\dim V$ and \textit{central} if $\cap_i H_i\neq\emptyset$.  We will always assume $\A$ is a central hyperplane arrangement.  We refer the reader to the landmark book of Orlik and Terao~\cite{Arrangements} for further details on arrangements.

The intersection lattice $L=L(\A)$ of $\A$ is the lattice whose elements (flats) are all possible intersections of the hyperplanes of $\A$, ordered with respect to reverse inclusion.  We will use $<$ to denote the ordering on the lattice, so if $X,Y\in L(\A)$ and $X\subseteq Y$ as intersections, then $Y\le X$ in $L(\A)$.  This is a ranked lattice with rank function the codimension of the flat; we denote by $L_i=L_i(\A)$ the flats $X\in L(\A)$ with rank $i$.  Given a flat $X\in L(\A)$, the (closed) subarrangement $\A_X$ is the hyperplane arrangement of those hyperplanes of $\A$ which contain $X$, and the \textit{restriction} of $\A$ to $X$, denoted $\A^X$, is the hyperplane arrangement (in linear space corresponding to $X$) with hyperplanes $\{H\cap X: H \not < X  \mbox{ in } L(\A)\}$.  If $X<Y$, the interval $[X,Y]\subset L(\A)$ is the sub-lattice of all flats $Z\in L$ so that $X\le Z\le Y$.  This is the intersection lattice of the arrangement $\A^Y_X$.

If $\A\subset V_1$ and $\B\subset V_2$ are two arrangements, then the product of $\A$ and $\B$ is the arrangement
\[
\A \times \B=\{H\oplus V_2:H\in\A\}\cup\{V_1\oplus H':H'\in\B\},
\]
and the arrangements $\A,\B$ are \textit{factors} of $\A\times\B$.  If an arrangement can be written as a product of two arrangements we say it is \textit{reducible}, otherwise we call it \textit{irreducible}.  (Notice that an arrangement is not essential if and only if it has the empty arrangement as a factor).

If $\A\subset V$ is an arrangement the module of derivations of $\A$, denoted $D(\A)$, is defined by
\[
D(\A)=\{\theta\in\mbox{Der}_\kk(S)| \theta(\alpha_H)\in\langle \alpha_H \rangle\mbox{ for all } H\in\A \}.
\]
If $D(\A)$ is free as an $S$-module, we say $\A$ is free.
\begin{defn}
A multi-arrangement $(\A,\m)$ is an arrangement $\A\subset V$, along with a function $\m:\A\rightarrow \Z_{>0}$ assigning a positive integer to every hyperplane.  The \textit{defining polynomial} of a multi-arrangement $(\A,\m)$ is $\Q(\A,\m):=\prod_{H\in\A} \alpha_H^{\m(H)}$.  The module of multi-derivations $D(\A,\m)$ is
\[
D(\A,\m)=\{\theta\in\mbox{Der}_\kk(S)|\theta(\alpha_H)\in\langle \alpha_H^{\m(H)} \rangle\mbox{ for all } H\in\A\}
\]
\end{defn}
	
\begin{lem}\label{lem:DerivationMatrix}
Let $(\A,\m)$ be a multi-arrangement in $V\cong \kk^\ell$.  Let $\alpha_i$ be the form defining the hyperplane $H_i$, and set $m_i=\m(H_i)$.  The module $D(\A,\m)$ of multiderivations on $\A$ is isomorphic to the kernel of the map
\[
\psi:S^{\ell+d}\rightarrow S^d,
\]
where $\psi$ is the matrix
\[
\begin{pmatrix}
& \vline & \alpha_1^{m_1} & & \\
B & \vline  & & \ddots & \\
& \vline & & & \alpha_k^{m_k}
\end{pmatrix}
\]
and $B$ is the matrix with entry $B_{ij}=a_{ij}$, where $\alpha_j=\sum_{i,j} a_{ij} x_i$.
\end{lem}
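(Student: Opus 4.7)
The plan is to unwind the definition of $D(\A,\m)$ by writing a generic derivation in coordinates and reading off a matrix of linear relations. First I would fix the basis $\{\partial/\partial x_1,\ldots,\partial/\partial x_\ell\}$ of $\mathrm{Der}_\kk(S)$ and note that any $\theta\in\mathrm{Der}_\kk(S)$ has a unique expression $\theta=\sum_{i=1}^\ell f_i\,\partial/\partial x_i$ with $f_i\in S$. Since $\alpha_j=\sum_i a_{ij}x_i$ is linear, one computes $\theta(\alpha_j)=\sum_{i=1}^\ell a_{ij}f_i$, which is exactly the $j$-th entry of $Bf$ where $f=(f_1,\ldots,f_\ell)^T$ (reading $B$ so that its $j$-th row lists the coordinates of $\alpha_j$).

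Next I would encode the defining condition $\theta(\alpha_j)\in\langle\alpha_j^{m_j}\rangle$ by introducing auxiliary elements $g_j\in S$ satisfying $\theta(\alpha_j)=g_j\alpha_j^{m_j}$ for $j=1,\ldots,d$. Collecting these into the column vector $g=(g_1,\ldots,g_d)^T$ and rewriting, the condition becomes
\[
B f - \mathrm{diag}(\alpha_1^{m_1},\ldots,\alpha_d^{m_d})\,g \;=\; 0,
\]
which is precisely the assertion that $(f_1,\ldots,f_\ell,-g_1,\ldots,-g_d)\in S^{\ell+d}$ lies in $\ker\psi$ for the matrix $\psi$ displayed in the statement (up to the harmless sign in the last $d$ coordinates, which can be absorbed by the $S$-module automorphism $g\mapsto -g$).

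Finally I would verify that the assignment $\theta\mapsto(f_1,\ldots,f_\ell,g_1,\ldots,g_d)$ is a well-defined $S$-module isomorphism $D(\A,\m)\xrightarrow{\sim}\ker\psi$. Well-definedness of this assignment requires the $g_j$ to be unique, which follows because $\alpha_j^{m_j}$ is a nonzerodivisor in the domain $S$, so $g_j$ is determined by $\theta(\alpha_j)$. Linearity over $S$ is immediate from the coordinate description. For the inverse, given any $(f_1,\ldots,f_\ell,g_1,\ldots,g_d)\in\ker\psi$, set $\theta=\sum_i f_i\,\partial/\partial x_i$; the kernel relations force $\theta(\alpha_j)=g_j\alpha_j^{m_j}\in\langle\alpha_j^{m_j}\rangle$, so $\theta\in D(\A,\m)$. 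Injectivity in this direction is clear since the $f_i$ recover $\theta$ and then the $g_j$ are recovered as above.

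Honestly, I do not expect a genuine obstacle: the lemma is a formal translation between derivations and syzygies, and the only point that deserves a moment of care is the uniqueness of $g_j$ (via the nonzerodivisor property) so that the correspondence truly defines a map of $S$-modules in both directions, rather than just a set-theoretic bijection.
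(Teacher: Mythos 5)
Your proof is correct and is essentially the same argument as the one the paper relies on: the paper gives no proof of its own, merely citing the remarks preceding Theorem~4.6 of Billera--Rose, where exactly this coordinate translation (a derivation $\theta=\sum_i f_i\,\partial/\partial x_i$ with $\theta(\alpha_j)=g_j\alpha_j^{m_j}$ corresponds to a syzygy $(f,-g)$ of the matrix $(B\mid \mathrm{diag}(\alpha_j^{m_j}))$) is carried out. Your observation that the $g_j$ are unique because $\alpha_j^{m_j}$ is a nonzerodivisor in $S$ is precisely the point that makes the projection $\ker\psi\to D(\A,\m)$ an $S$-module isomorphism rather than merely a surjection, so nothing is missing.
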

\begin{proof}
See the comments preceding~\cite[Theorem~4.6]{DimSeries}.
\end{proof}

If $D(\A,\m)$ is free as an $S$-module then we say that the multi-arrangement $(\A,\m)$ is free and $\m$ is a \textit{free multiplicity} of $\A$.  If $D(\A,\m)$ is free there is (by definition) a \textit{basis} of derivations $\theta_1,\ldots,\theta_\ell\in D(\A,\m)$ so that every other $\theta\in D(\A,\m)$ can be written uniquely as a polynomial combination of $\theta_1,\ldots,\theta_\ell$.  If $\A$ is central (which we will assume throughout), we may assume these derivations are homogeneous with degrees $d_i=\deg(\theta_i)$.  The set $(d_1,\ldots,d_\ell)$ are called the \textit{exponents} of $D(\A,\m)$.  We will always assume $d_1\ge d_2\ge\cdots\ge d_\ell$.  Write $|\m|$ for $\sum_{H\in\A}\m(H)$.  It follows from Saito's criterion (below) that if $D(\A,\m)$ is free with exponents $(d_1,\ldots,d_\ell)$ then $\sum_{i=1}^\ell d_i=|\m|$.

\begin{prop}[Saito's criterion]\label{prop:Saito}
Let $(\A,\m)$ be a central arrangement in a vector space $V$ of dimension $\ell$, and write $\kk[x_1,\ldots,x_\ell]$ for $\mbox{Sym}(V^*)$.  Suppose $\theta_1,\ldots,\theta_\ell$ are derivations with $\theta_i=\sum_{j=1}^\ell \theta_{ij}\frac{\partial}{\partial x_i}$.  Write $M=M(\theta_1,\ldots,\theta_\ell)$ for the $\ell\times\ell$ matrix of coefficients $M_{ij}=\theta_{ij}$.  Then $D(\A,\m)$ is free with basis $\theta_1,\ldots,\theta_\ell$ if and only if $\det(M)$ is a scalar multiple of the defining polynomial $\Q(\A,\m)$.
\end{prop}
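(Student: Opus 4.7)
The plan is to derive both directions from a single divisibility lemma together with a local analysis, followed by Cramer's rule.

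The core observation is: for any $\theta_1,\ldots,\theta_\ell \in D(\A,\m)$ (not assumed to be a basis), with Saito matrix $M$, we have $\Q(\A,\m) \mid \det(M)$ in $S$. To see this, fix a hyperplane $H$ and choose coordinates so that $\alpha_H = x_1$. Then $\theta_i(x_1) = \theta_{i1}$, and the defining condition $\theta_i(\alpha_H) \in \alpha_H^{\m(H)} S$ forces $x_1^{\m(H)} \mid \theta_{i1}$ for every $i$. Hence the first column of $M$ is divisible by $x_1^{\m(H)}$, so $\alpha_H^{\m(H)} \mid \det(M)$. Coprimality of the $\alpha_H$ for distinct $H$ then yields $\Q(\A,\m) \mid \det(M)$.

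For the forward direction, it remains to show $g := \det(M)/\Q(\A,\m)$ is a nonzero scalar when $\theta_1,\ldots,\theta_\ell$ form a basis of $D(\A,\m)$. A global basis is a local basis at every prime of $S$. At a point off all hyperplanes, $D(\A,\m)$ localizes to $\mbox{Der}_\kk(S)$, so $M$ is invertible locally and $g$ is a unit. At a generic point of $V(\alpha_H)$, working in coordinates with $\alpha_H = x_1$, the tuple $\alpha_H^{\m(H)}\partial_1, \partial_2, \ldots, \partial_\ell$ is a local basis of $D(\A,\m)$ whose Saito matrix has determinant $\alpha_H^{\m(H)}$; expressing the given basis in terms of this local one gives $\det(M) = (\text{unit}) \cdot \alpha_H^{\m(H)}$ locally, so $g$ is again a unit. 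Since $S$ is a UFD and $g$ is a unit at every height-one prime, $g$ has no irreducible factor, and thus $g$ is a nonzero constant.

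For the converse, assume $\theta_i \in D(\A,\m)$ with $\det(M) = c \cdot \Q(\A,\m)$ and $c \neq 0$. Nonvanishing of $\det(M)$ gives $S$-linear independence. For any $\eta \in D(\A,\m)$, Cramer's rule expresses $\eta = \sum f_i \theta_i$ with $f_i = \det(M_i)/\det(M) \in \mathrm{Frac}(S)$, where $M_i$ is obtained from $M$ by replacing its $i$th row with the coefficient vector of $\eta$. But $M_i$ is itself a Saito matrix of $\ell$ derivations in $D(\A,\m)$, so the divisibility lemma gives $\Q(\A,\m) \mid \det(M_i)$, and hence $f_i \in S$. Thus $\theta_1,\ldots,\theta_\ell$ freely generate $D(\A,\m)$. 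The main obstacle is the scalar step in the forward direction, since divisibility alone only shows $\det(M)$ is an $S$-multiple of $\Q(\A,\m)$; I handle this via the local-freeness argument above, which sidesteps the need to independently establish the degree identity $\sum \deg(\theta_i) = |\m|$.
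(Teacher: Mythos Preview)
The paper does not supply a proof of Proposition~\ref{prop:Saito}; Saito's criterion is stated as a classical result and used without argument. Your proof is correct and self-contained: the divisibility lemma plus Cramer's rule is exactly the standard route for the converse, and your local analysis at height-one primes is a clean way to handle the forward direction without first establishing the degree identity $\sum_i\deg(\theta_i)=|\m|$.

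One remark on presentation rather than substance: in the divisibility step you silently change coordinates so that $\alpha_H=x_1$. This is fine because the Saito determinant transforms by a nonzero scalar under a linear change of basis, but since you later rely on this same manoeuvre in the local argument at $(\alpha_H)$ it would be worth saying once that $\det(M)$ is well defined up to $\kk^\times$ under coordinate change. Also, the proposition as stated does not explicitly require $\theta_i\in D(\A,\m)$; you correctly read this hypothesis as implicit (otherwise the converse fails trivially), and it would do no harm to say so.
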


If $X\in L(\A)$, we write $(\A_X,\m_X)$ for the multi-arrangement $\A_X$ with multiplicity function $\m_X=\m|_{\A_X}$.  If $(\A_X,\m_X)$ is free for every $X\neq \cap_{H\in\A} H \in L$, then we say $(\A,\m)$ is \textit{locally free}; equivalently the associated sheaf $\widetilde{D(\A,\m)}$ is a vector bundle on $\mathbb{P}^{\ell-1}$.

\begin{prop}\cite[Proposition~1.7]{AbeSignedEliminable}\label{prop:pdimLB}
Let $(\A,\m)$ be a multi-arrangement, $X\in L(\A)$, and $(\A_X,\m_X)$ the corresponding closed subarrangement with restricted multiplicities.  Then $\mbox{pdim}(D(\A,\m))\ge \mbox{pdim}(D(\A_X,\m_X))$.
\end{prop}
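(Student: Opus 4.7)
The plan is to localize at the prime ideal $P_X \subset S$ defining the flat $X$ and exploit the fact that projective dimension cannot increase under localization. I expect the main work to lie in showing that \emph{equality} (rather than a strict inequality) holds for $D(\A_X,\m_X)$ under this localization, so that the drop in projective dimension that I am willing to tolerate for $D(\A,\m)$ is not also incurred by the subarrangement.

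First I would verify the identification $D(\A,\m)_{P_X} \cong D(\A_X,\m_X)_{P_X}$ of $S_{P_X}$-modules. For any $H \in \A \setminus \A_X$ the defining form $\alpha_H$ does not vanish on $X$, so $\alpha_H \notin P_X$ and becomes a unit after localization; the constraint $\theta(\alpha_H) \in \alpha_H^{\m(H)} S_{P_X}$ is then automatic, leaving only the conditions coming from hyperplanes of $\A_X$. By exactness of localization applied to the kernel description of the derivation module, this yields the claimed isomorphism. Applying the standard inequality $\mbox{pdim}_S(M) \ge \mbox{pdim}_{S_{P_X}}(M_{P_X})$ (obtained by localizing a finite free resolution of $M$) to $M = D(\A,\m)$ then yields
\[
\mbox{pdim}_S(D(\A,\m)) \ge \mbox{pdim}_{S_{P_X}}(D(\A_X,\m_X)_{P_X}).
\]

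To close the argument it remains to show $\mbox{pdim}_{S_{P_X}}(D(\A_X,\m_X)_{P_X}) = \mbox{pdim}_S(D(\A_X,\m_X))$. After choosing coordinates so that $X = V(x_1,\ldots,x_r)$ with $r = \mbox{codim}(X)$, every hyperplane of $\A_X$ is defined by a linear form in $S' := \kk[x_1,\ldots,x_r]$, and a direct computation gives the decomposition
\[
D(\A_X,\m_X) \cong \bigl(S \otimes_{S'} D(\A'_X,\m'_X)\bigr) \oplus S^{\ell-r},
\]
where $(\A'_X,\m'_X)$ denotes the essentialization of $(\A_X,\m_X)$ in $V/X$. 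A minimal graded free resolution of $D(\A'_X,\m'_X)$ over $S'$ has differentials with entries in the ideal $(x_1,\ldots,x_r)$, and this containment is preserved upon tensoring with either $S$ or $S_{P_X}$. Since $(x_1,\ldots,x_r)$ lies in both the irrelevant maximal ideal of $S$ and in the maximal ideal $P_X S_{P_X}$, the tensored resolutions stay minimal, so both projective dimensions equal $\mbox{pdim}_{S'}(D(\A'_X,\m'_X))$. Chaining this equality with the previous display would give the proposition.

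The main obstacle I anticipate is verifying the decomposition and the preservation of minimality in the last step; both amount to careful bookkeeping, namely writing $S$ as a free $S'$-module on the monomials in $x_{r+1},\ldots,x_\ell$ and checking that the derivation conditions for hyperplanes in $\A_X$ split componentwise across this basis.
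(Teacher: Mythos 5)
Your proposal is correct, but there is nothing in the paper to compare it against: the paper does not prove this proposition at all, it simply quotes it from Abe--Nuida--Numata~\cite{AbeSignedEliminable}. What you have written is the standard localization argument for statements of this kind, and it is consistent in spirit with the locality facts the paper does record (Lemma~\ref{lem:local}, used in the proof of Proposition~\ref{prop:CMCodimK}). All three of your steps check out: (i) with $P_X=I(X)$, every $\alpha_H$ with $H\notin\A_X$ is a unit in $S_{P_X}$, so localizing the description of $D(\A,\m)$ as an intersection of the submodules $D(\A_H,\m(H))\subset\mbox{Der}_\kk(S)$ (or the kernel description of Lemma~\ref{lem:DerivationMatrix}) gives $D(\A,\m)_{P_X}\cong D(\A_X,\m_X)_{P_X}$; (ii) $\mbox{pdim}_S(M)\ge \mbox{pdim}_{S_{P_X}}(M_{P_X})$ follows by localizing a finite free resolution; (iii) after choosing coordinates with $X=V(x_1,\ldots,x_r)$ one has $P_X=(x_1,\ldots,x_r)$, the splitting $D(\A_X,\m_X)\cong\bigl(S\otimes_{S'}D(\A'_X,\m'_X)\bigr)\oplus S^{\ell-r}$ holds because the defining forms of $\A_X$ lie in $S'=\kk[x_1,\ldots,x_r]$ and divisibility by $\alpha_H^{\m(H)}\in S'$ can be tested coefficientwise on the $S'$-basis of monomials in $x_{r+1},\ldots,x_\ell$, and since $S$ and $S_{P_X}$ are flat over $S'$ while $(x_1,\ldots,x_r)$ is contained both in the irrelevant ideal of $S$ and in $P_XS_{P_X}$, a minimal graded $S'$-free resolution of $D(\A'_X,\m'_X)$ remains a minimal resolution after either base change; the free summand does not affect projective dimension. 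Chaining (i)--(iii) yields $\mbox{pdim}(D(\A,\m))\ge \mbox{pdim}_{S_{P_X}}(D(\A_X,\m_X)_{P_X})=\mbox{pdim}(D(\A_X,\m_X))$, as claimed, so your argument is a complete and self-contained proof of the cited result.
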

					
\begin{lem}[Ziegler~\cite{ZieglerMulti}]\label{lem:globalUB}
For any arrangement $\A\subset V,\mbox{pdim}(D(\A,\m))\le r(\A)-2$.  In particular, if $r(\A)\le 2$ then $(\A,\m)$ is free.
\end{lem}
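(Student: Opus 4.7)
The plan is to reduce to the essential case and then invoke the Auslander--Buchsbaum formula, the crucial input being that $D(\A,\m)$ has depth at least $2$ as an $S$-module. If $\A$ is not essential, I would write $V = V_1 \oplus V_2$ with $V_2 = \cap_{H \in \A} H$. Since every $\alpha_H$ lies in $V_1^*$, a derivation $\theta = \theta_1 + \theta_2$, split along the decomposition $\mbox{Der}_\kk(S) = (S\otimes V_1) \oplus (S\otimes V_2)$, belongs to $D(\A,\m)$ if and only if $\theta_1$ does. Expanding coefficients in the $V_2$-coordinates shows that the $V_1$-part of $D(\A,\m)$ is $D(\A',\m) \otimes_{S_1} S$, where $\A' \subset V_1$ is the essentialization of rank $r$ and $S_1 = \mbox{Sym}(V_1^*)$. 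Altogether,
\[
D(\A,\m) \;\cong\; \bigl(D(\A',\m)\otimes_{S_1} S\bigr) \,\oplus\, S^{\ell-r},
\]
and flatness of $S$ over $S_1$ preserves projective dimension, reducing the problem to the case $\ell = r$.

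In the essential case with $\ell = r \geq 2$, the key step is to consider the short exact sequence
\[
0 \longrightarrow D(\A,\m) \longrightarrow \mbox{Der}_\kk(S) \xrightarrow{\;\Phi\;} Q \longrightarrow 0,
\]
where $\Phi(\theta) = \bigl(\theta(\alpha_H) \bmod \alpha_H^{\m(H)}\bigr)_{H\in\A}$ and $Q := \mathrm{image}(\Phi) \subset \bigoplus_{H\in\A} S/\alpha_H^{\m(H)}S$. The associated primes of the ambient direct sum are precisely the height-one primes $(\alpha_H)$; since $\ell \geq 2$ the maximal ideal is not among them, so every associated prime of the submodule $Q$ also has positive height, forcing $\mathrm{depth}(Q) \geq 1$. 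The depth lemma then yields $\mathrm{depth}(D(\A,\m)) \geq \min(\ell,\, \mathrm{depth}(Q)+1) \geq 2$, and Auslander--Buchsbaum over the regular ring $S$ delivers $\mbox{pdim}(D(\A,\m)) = \ell - \mathrm{depth}(D(\A,\m)) \leq \ell - 2 = r - 2$.

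The ``in particular'' statement then follows: when $r = 0$ or $r = 1$, the module $D(\A,\m)$ is free by direct inspection (the arrangement is either empty, giving $D(\A,\m) = \mbox{Der}_\kk(S)$, or a single multi-hyperplane, with explicit basis $\alpha_H^{\m(H)}\partial/\partial\alpha_H$ together with $\ell-1$ coordinate derivations annihilating $\alpha_H$); and when $r = 2$ the bound forces $\mbox{pdim}(D(\A,\m)) \leq 0$, hence freeness. The part I expect to require the most care is the depth estimate in the second paragraph, specifically the associated-prime argument showing $\mathrm{depth}(Q) \geq 1$; once that is in place, the rest is standard commutative-algebra machinery.
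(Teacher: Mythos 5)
Your argument is correct. Note that the paper does not prove this lemma at all --- it is quoted from Ziegler's paper on multiarrangements --- and your route is essentially the standard one behind Ziegler's result: after splitting off the inessential part, $D(\A,\m)$ is exhibited as the kernel of a map from the free module $\mbox{Der}_\kk(S)$ onto a submodule $Q$ of $\bigoplus_H S/\alpha_H^{\m(H)}S$, whose associated primes are among the height-one primes $(\alpha_H)$, so $\mathrm{depth}(Q)\ge 1$, $\mathrm{depth}(D(\A,\m))\ge 2$ (i.e.\ $D(\A,\m)$ is a second syzygy), and Auslander--Buchsbaum gives $\mbox{pdim}(D(\A,\m))\le \ell-2=r-2$. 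Your reduction to the essential case via $D(\A,\m)\cong (D(\A',\m)\otimes_{S_1}S)\oplus S^{\ell-r}$ and the separate treatment of $r\le 1$ (where the literal inequality is vacuous but freeness is checked directly) are both sound; the only inessential refinement is that flat base change gives $\mbox{pdim}_S(D(\A',\m)\otimes_{S_1}S)\le\mbox{pdim}_{S_1}(D(\A',\m))$, which is all the bound requires.
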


If $\A$ is an arrangement and $H\in\A$, we denote by $(\A^H,\m)$ the \textit{Ziegler restriction} of $\A$ to $\A^H$; this is the arrangement $\A^H$ with the multiplicity function $\m^H$ defined by
\[
\m^H(X)=\#\{H'\in\A:H'\cap H=X\}
\]
for every $X\in \A^H$.  We include the following criterion for freeness which is due to Yoshinaga~\cite{YoshCharacterizationFreeArr}; the observation that we can restrict to codimension three was made in~\cite[Theorem~4.1]{AbeYoshinaga}.

\begin{thm}\cite[Theorem~2.2]{YoshCharacterizationFreeArr}\label{thm:Yoshinaga}
An arrangement $\A$ over a field of characteristic zero is free if and only if, for some $H\in\A$:
\begin{enumerate}
\item $(\A^H,\m^H)$ is free and
\item $\A_X$ is free for every $X\neq 0\in L_3(\A)$ so that $H<X$.
\end{enumerate}
\end{thm}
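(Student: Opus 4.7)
The plan is to verify both implications separately. The forward direction follows from established tools: if $\A$ is free, then Ziegler's original theorem (in the same paper~\cite{ZieglerMulti}) gives freeness of $(\A^H,\m^H)$ for every $H\in\A$, and Proposition~\ref{prop:pdimLB} applied with multiplicity $\m\equiv 1$ forces $\mbox{pdim}(D(\A_X))=0$ for every $X\in L(\A)$, so $\A_X$ is free. Only the reverse direction requires substantial work.

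For the reverse direction, the strategy is to pass to the associated coherent sheaf $\cE:=\widetilde{D(\A)}$ on $\mathbb{P}^{\ell-1}$ and apply a Horrocks-type splitting criterion. The first step is to use condition~(2) to argue that $\cE$ is a vector bundle. By Lemma~\ref{lem:globalUB}, $D(\A)$ already has projective dimension at most $\ell-2$, so failure of local freeness of $\cE$ can only occur at closed points of $\mathbb{P}^{\ell-1}$ corresponding to rank-$3$ flats. Hypothesis~(2) rules out failure at rank-$3$ flats $X$ with $H<X$; the remaining rank-$3$ flats are handled by an auxiliary argument using that freeness of $(\A^H,\m^H)$ forces local conditions on $\cE$ along $\mathbb{P}(H)$, which together with the Euler derivation and genericity of $H$ covers the flats not containing $H$.

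The next step is to use condition~(1) to show that $\cE|_{\mathbb{P}(H)}$ is a direct sum of line bundles on $\mathbb{P}(H)\cong\mathbb{P}^{\ell-2}$. Here one sets up an exact sequence relating $D(\A)$ to $D(\A^H,\m^H)$; restricting $\theta\in D(\A)$ to $H$ induces an injection $D(\A)/\alpha_H D(\A)\hookrightarrow D(\A^H,\m^H)$ whose cokernel can be analyzed via Saito's criterion (Proposition~\ref{prop:Saito}) to control the sheafification. Then freeness of $D(\A^H,\m^H)$ translates into $\cE|_{\mathbb{P}(H)}$ splitting as a direct sum of line bundles on $\mathbb{P}^{\ell-2}$.

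The final step is to invoke a Horrocks-type splitting criterion: a vector bundle on $\mathbb{P}^{\ell-1}$ whose restriction to a hyperplane splits as a direct sum of line bundles itself splits. Combined with the fact that $D(\A)$ is reflexive (it is the kernel of the map $\psi$ in Lemma~\ref{lem:DerivationMatrix}, hence has depth at least $2$), this yields freeness of $D(\A)$. The main obstacle will be the sheaf-theoretic step: precisely identifying $\cE|_{\mathbb{P}(H)}$ with the sheafification arising from $D(\A^H,\m^H)$ through the Saito-type cokernel analysis, and then invoking the splitting criterion with correct hypotheses. Characteristic zero enters here because the Euler derivation is needed both to reduce the rank-$\ell$ bundle $\cE$ to a rank-$(\ell-1)$ summand on which the splitting criterion applies cleanly, and to ensure the restriction map to $D(\A^H,\m^H)$ has well-behaved image.
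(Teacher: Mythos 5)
Note first that the paper does not prove this statement at all: it is quoted from Yoshinaga \cite{YoshCharacterizationFreeArr}, with the reduction to codimension three taken from \cite[Theorem~4.1]{AbeYoshinaga}. Your forward direction is fine (Ziegler's theorem \cite{ZieglerMulti} plus Proposition~\ref{prop:pdimLB}), but the reverse direction has a genuine gap at its very first step. Hypotheses (1)--(2) do not make $\widetilde{D(\A)}$ a vector bundle, and your claim that non-local-freeness is confined to rank-three flats is false: Lemma~\ref{lem:globalUB} only forces freeness of localizations at flats of rank at most two, while at a flat of rank $k\ge 3$ the bound is $\mbox{pdim}\le k-2$, so local freeness can fail at flats of every rank $\ge 3$, and freeness of all rank-three localizations does not propagate upward (a rank-four generic localization with at least five hyperplanes has boolean, hence free, rank-three localizations but is itself non-free by Corollary~\ref{cor:pdimcircuit}). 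Moreover hypothesis (2) only concerns rank-three flats contained in $H$; rank-three flats not contained in $H$, and all higher-rank flats, are untouched, and there is no ``genericity of $H$'' to appeal to since $H$ is an arbitrary hyperplane of $\A$. Establishing local freeness away from $H$ is essentially equivalent to the theorem itself, so it cannot be fed as an input to a Horrocks-type restriction argument (which, incidentally, also needs $\ell-1\ge 3$; for rank three the hyperplane restriction always splits and the criterion says nothing).

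The second gap is the identification of $\widetilde{D(\A)}|_{\mathbb{P}(H)}$ with the sheaf associated to $D(\A^H,\m^H)$. Restricting derivations gives only an injection of $D(\A)/\alpha_H D(\A)$ (after splitting off the Euler derivation) into the Ziegler restriction module, and the cokernel of this map is precisely the obstruction; showing it vanishes, i.e.\ that the Ziegler restriction map is surjective, is the heart of Yoshinaga's proof. There it is achieved not by a formal Saito-type manipulation of cokernels but by comparing local and global second Chern classes (equivalently, second coefficients of characteristic polynomials via the Solomon--Terao formula), with local freeness along $H$ --- weakened to codimension three in \cite{AbeYoshinaga} --- guaranteeing that the local contributions agree; once surjectivity is known one lifts a basis of $D(\A^H,\m^H)$ and concludes with the Euler derivation and Saito's criterion (Proposition~\ref{prop:Saito}), with no splitting criterion on $\mathbb{P}^{\ell-1}$ needed. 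As written, your outline assumes exactly the two facts that carry all the content of the theorem, so it does not constitute a proof.
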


The second condition is sometimes stated as `$\A$ is locally free along $H$ in codimension three.'

\section{The homological criterion}\label{sec:ChainComplex}
Let $(\A,\m)$ be a multi-arrangement.  In this section we prove Theorem~\ref{thm:Free}; we describe the chain complex $\cD^\bullet(\A,\m)$ and prove that $(\A,\m)$ is free if and only if $H^i(\cD^\bullet(\A,\m))=0$ for all $i>0$.  The construction of the modules which comprise $\cD^\bullet(\A,\m)$ is due to Brandt and Terao if $\m\equiv 1$~\cite{TeraoPoincare,BrandtTerao}; we make the straightforward observation that the same definitions work also for multi-arrangements.  We follow the presentation given in~\cite{BrandtTerao}.


\begin{defn}\label{defn:Dk}
Set $D_0(\A,\m)=D(\A,\m)$ and for $1\le k\le r=r(\A)$ inductively define $D_k(\A,\m)$ and $K_k(\A,\m)$ as the cokernel and kernel, respectively of the map
\[
\tau_{k-1}=\tau_{k-1}(\A): D_{k-1}(\A,\m)\rightarrow\bigoplus\limits_{X\in L_{k-1}} D_{k-1}(\A_X,\m_X),
\]
where $\tau_{k}$ is a sum of maps $\phi_{k}(Y):D_{k}(\A,\m)\rightarrow D_{k}(\A_Y,\m_Y)$.  For $Y\in L$ with $r(Y)\ge k$, $\phi_k(Y)$ is defined inductively (the map for $k=0$ is the usual inclusion of derivations) via the diagram in Figure~\ref{fig:Dk}:
\begin{figure}[htp]
\centering
\begin{tikzcd}
D_{k-1}(\A,\m) \ar{r}{\tau_{k-1}(\A)} \ar{d}{\phi_{k-1}(Y)} & \bigoplus\limits_{X\in L_{k-1}} D_{k-1}(\A_X,\m_X) \ar{r}\ar{d}{p_{k-1}(Y)} & D_k(\A,\m) \ar{r}\ar{d}{\phi_k(Y)} & 0\\
D_{k-1}(\A_Y,\m_Y) \ar{r}{\tau_{k-1}(\A_Y)} & \bigoplus\limits_{\substack{X\le Y\\ r(X)=k-1}} D_{k-1}((\A_Y)_X,(\m_Y)_X) \ar{r} & D_k(\A_Y,\m_Y) \ar{r} & 0
\end{tikzcd}
\caption{Diagram for Definition~\ref{defn:Dk}}\label{fig:Dk}
\end{figure}
The center vertical map is projection, the left-hand square commutes, so $\phi_k(Y)$ may be defined so that the right-hand square commutes.
\end{defn}

\begin{remark}\label{rem:D1}
Given an arrangement $\A$, the only flat of $L$ with rank $0$ is $V$, the ambient space of $\A$.  The module $D_1(\A,\m)$ is the cokernel of the map
\[
D_0(\A,\m)\xrightarrow{\tau_0} \bigoplus\limits_{X\in L_0} D_0(\A_X,\m),
\]
in other words the cokernel of the inclusion
\[
D(\A,\m)\rightarrow D(V)=\mbox{Der}_\kk(S)\cong S^\ell,
\]
where $\ell=\dim(V)$.
\end{remark}

\begin{remark}\label{rem:LowDescription}
Fix a basis $x_1,\ldots,x_\ell$ for $S_1=\mbox{Sym}(V^*)_1$ and denote the corresponding basis of $\mbox{Der}_{\kk}(S)$ by $\partial_i=\partial/\partial x_i$.  Number the hyperplanes of $\A$ by $H_1,\ldots,H_k$.  Assume $H_j=V(\alpha_j)$, where $\alpha_j=\alpha_{H_j}=\sum_{i} a_{ij}x_i$.  For some $H=H_j\in\A$ let $\partial_H=\sum_i a_{ij}\partial_i$.

For $H\in \A$, let $J(H)=\langle \alpha_H^{\m(H)} \rangle$, the ideal generated by $\alpha_H^{\m(H)}$ in $S$.  Then $D(\A_H,\m_H)\subset \mbox{Der}_\kk(S)$ is isomorphic to $J(H)\partial_H\oplus S^{\ell-1}$, where $\ell=\dim(V)$ and $J(H)\partial_H$ denotes that $J(H)$ is living inside of the copy of $S$ corresponding to the basis element $\partial_H$.  So $D_1(\A_H,\m_H)$ is the cokernel of the inclusion $D(\A_H,\m_H)\rightarrow \mbox{Der}_{\kk}(S)\cong S^\ell,$
which may be identified as $S\partial_H/J(H)$.  There is then a natural map
\[
\mbox{Der}_{\kk}(S)\cong S^\ell\xrightarrow{B} \bigoplus_{H\in\A} \dfrac{S}{J(H)}=\bigoplus_{X\in L_1} D_1(\A_X,\m_X),
\]
where $B$ is the matrix with entries $B_{ij}=a_{ij}$.  The kernel of this map is $D(\A,\m)$, its image is $D_1(\A,\m)$, and its cokernel is $D_2(\A,\m)$.
\end{remark}

\begin{remark}
We will discuss computations of $D_k(\A,\m)$ further in \S~\ref{sec:Computations}.
\end{remark}

Extending Remark~\ref{rem:LowDescription}, we assemble the modules $\bigoplus\limits_{X\in L_k} D_k(\A_X,\m_X)$ into a chain complex.

\begin{defn}\label{defn:DerivationComplex}
Set $\cD^k=\bigoplus\limits_{X\in L_k} D_k(\A_X,\m_X)$.  Define $\delta^k:\cD^k\rightarrow \cD^{k+1}$ by the composition $\cD^k\rightarrow D_{k+1}(\A,\m) \xrightarrow{\tau_{k+1}} \cD^{k+1},$ where the first map is the natural surjection from Definition~\ref{defn:Dk}.  The derivation complex $\cD^\bullet=\cD^{\bullet}(\A,\m)$ is the chain complex with modules $\cD^k$ for $k=0,\ldots,r(\A)$ and maps $\delta^k:\cD^k\rightarrow \cD^{k+1}$ for $k=0,\ldots,r(\A)-1$.
\end{defn}

\begin{remark}\label{rem:ComplexDiagram}
The derivation complex $\cD^\bullet$ is tautologically a complex from the definitions of $D_k(\A,\m)$ and $\delta^k$.  The commutative diagram in Figure~\ref{fig:DerComplex} shows how all the definitions so far fit together.  Note that $K_i(\A,\m)$ from Definition~\ref{defn:Dk} may be identified with $H^i(\cD^\bullet)$.
\end{remark}

\begin{remark}
The chain complex $\cD^\bullet$ in Definition~\ref{defn:DerivationComplex} is essentially dual to a chain complex described in~\cite{StefanFormal}; we will describe the precise connection in \S~\ref{sec:Formal}.
\end{remark}

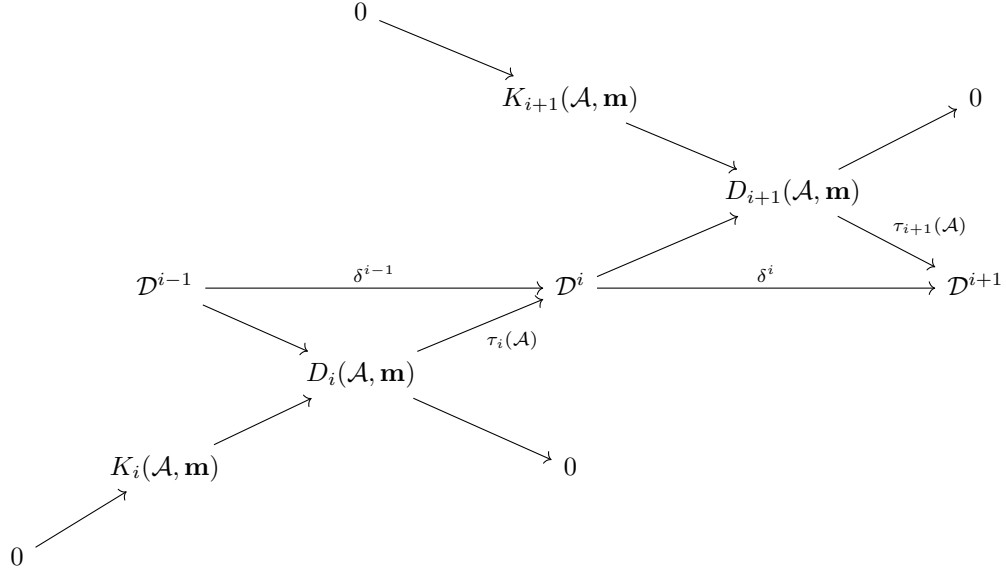
\begin{figure}	
\centering
\begin{tikzcd}
& & 0\ar[dr] & & & \\
& & & K_{i+1}(\A,\m)\ar{dr} & & 0 \\
& & & & D_{i+1}(\A,\m)\ar{ur}\ar{dr}{\tau_{i+1}(\A)} \\
& \cD^{i-1} \ar{dr}\ar{rr}{\delta^{i-1}} & & \cD^i\ar{ur}\ar{rr}{\delta^i} & & \cD^{i+1} \\
& & D_i(\A,\m) \ar{ur}[swap]{\tau_i(\A)}\ar[dr] & & & \\
& K_i(\A,\m)\ar[ur] & & 0 & &\\
0\ar[ur] & & & & &
\end{tikzcd}
\caption{Components of Definition~\ref{defn:DerivationComplex}}\label{fig:DerComplex}
\end{figure}

\begin{lem}\label{lem:H0Der}
For a multi-arrangement $(\A,\m)$, $H^0(\cD^\bullet(\A,\m))\cong D(\A,\m)$.
\end{lem}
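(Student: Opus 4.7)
The plan is to simply unpack the definitions at the $0$th spot of the complex and appeal to the already-established Remark~\ref{rem:LowDescription}. First I would identify $\cD^0$. By Definition~\ref{defn:DerivationComplex}, $\cD^0 = \bigoplus_{X \in L_0} D_0(\A_X, \m_X)$, but the only rank-zero flat of $L(\A)$ is the ambient space $V$ itself, and no hyperplane contains $V$, so $\A_V = \emptyset$. Therefore $D_0(\A_V, \m_V) = D(\emptyset) = \mbox{Der}_\kk(S) \cong S^\ell$, and $\cD^0 \cong S^\ell$. This matches the description given in Remark~\ref{rem:D1}.

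Next I would unpack $\delta^0 \colon \cD^0 \to \cD^1$. By Definition~\ref{defn:DerivationComplex}, $\delta^0$ factors as
\[
\cD^0 \twoheadrightarrow D_1(\A,\m) \xrightarrow{\tau_1} \cD^1,
\]
where the first arrow is the natural surjection coming from Definition~\ref{defn:Dk} and the fact that $D_1(\A,\m)$ is the cokernel of the inclusion $D(\A,\m) \hookrightarrow \mbox{Der}_\kk(S) = \cD^0$ (again per Remark~\ref{rem:D1}). Remark~\ref{rem:LowDescription} identifies this composition explicitly: under the isomorphisms $\cD^0 \cong S^\ell$ and $\cD^1 \cong \bigoplus_{H \in \A} S/J(H)$, the map $\delta^0$ is precisely the matrix $B$ whose $j$th component sends $\theta = \sum_i f_i \partial_i$ to $\theta(\alpha_j) \pmod{\alpha_j^{\m(H_j)}}$.

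Finally, $H^0(\cD^\bullet) = \ker(\delta^0)$, and the kernel of $B$ is, by the very definition of $D(\A,\m)$, the set of derivations $\theta \in \mbox{Der}_\kk(S)$ with $\theta(\alpha_H) \in \langle \alpha_H^{\m(H)} \rangle$ for every $H \in \A$, namely $D(\A,\m)$ itself. This completes the identification.

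There is no real obstacle here; the statement is a direct consequence of Remark~\ref{rem:LowDescription}, and the only subtlety is the book-keeping convention $\A_V = \emptyset$, which ensures that $\cD^0$ really is the full derivation module $\mbox{Der}_\kk(S)$ rather than $D(\A,\m)$. Once that is in hand, the computation of $H^0$ is immediate.
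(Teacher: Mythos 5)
Your proof is correct and is essentially the paper's argument: the paper's proof of this lemma is simply the observation that it is immediate from Remark~\ref{rem:LowDescription}, and your write-up just spells out that identification of $\cD^0\cong S^\ell$, of $\delta^0$ with the matrix $B$, and of $\ker(B)$ with $D(\A,\m)$ in detail.
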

\begin{proof}
This is immediate from Remark~\ref{rem:LowDescription}.
\end{proof}

Now we proceed to the proof of Theorem~\ref{thm:Free}.  We use a few preliminary results.

\begin{lem}\cite[Lemma~4.12]{BrandtTerao}\label{lem:local}
For any $k$, the functors $X\to D_k(\A_X,\m_X)$ for $X\in L$ are \textit{local} in the sense of~\cite[Definition~6.4]{SolomonTeraoCharPoly}.
Namely let $P\in \mbox{Spec}(S)$, $X\in L$, and set $X(P)=\bigcap\limits_{\substack{H\in\A_X\\ \alpha_H\in P}} H$.  Then 
\begin{itemize}
\item $D_k(\A_X,\m_X)_P=D_k(\A_{X(P)},\m_{X(P)})_P$ and
\item $\cD^\bullet(\A,\m)_P=\cD^\bullet(\A_{X(P)},\m_{X(P)})_P$.
\end{itemize}
\end{lem}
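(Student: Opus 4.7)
The plan is to prove both bullet points jointly by induction on $k$, closely mirroring the argument for the simple case in~\cite[Lemma~4.12]{BrandtTerao}, with the multiplicity function $\m$ carried passively throughout.

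For the base case $k = 0$, the module $D_0(\A_X, \m_X) = D(\A_X, \m_X)$ is presented as the kernel of the matrix $\psi$ described in Lemma~\ref{lem:DerivationMatrix}. Localizing at $P$ renders $\alpha_H^{\m(H)}$ a unit for every hyperplane $H$ with $\alpha_H \notin P$, so the column corresponding to such $H$ can be eliminated from $\psi$ without changing the localized kernel. The surviving columns correspond exactly to the hyperplanes of $\A_{X(P)}$, yielding $D(\A_X, \m_X)_P = D(\A_{X(P)}, \m_{X(P)})_P$.

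For the inductive step, $D_k(\A_X, \m_X)$ is by Definition~\ref{defn:Dk} the cokernel of $\tau_{k-1}(\A_X)$. Since localization is exact, $D_k(\A_X, \m_X)_P$ is the cokernel of the localized map
\[
D_{k-1}(\A_X, \m_X)_P \xrightarrow{\tau_{k-1}(\A_X)_P} \bigoplus_{\substack{Y \le X \\ r(Y) = k-1}} D_{k-1}(\A_Y, \m_Y)_P,
\]
where we used $(\A_X)_Y = \A_Y$ when $Y \le X$. Applying the inductive hypothesis to the source and to each target summand replaces $\A_X$ by $\A_{X(P)}$ and each $\A_Y$ by $\A_{Y(P)}$. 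It remains to match the resulting localized diagram with the diagram defining $D_k(\A_{X(P)}, \m_{X(P)})_P$: summands corresponding to flats $Y$ with $r(Y(P)) < k-1$ must be shown to be absorbed (as images coming from lower-rank flats via the iterated structure of $\tau_{k-1}$), while summands with $r(Y(P)) = k-1$ correspond bijectively to the rank $k-1$ flats of $\A_{X(P)}$. The second bullet point then follows formally from the first: the complex $\cD^\bullet$ is built from direct sums of the $D_k(\A_X, \m_X)$ with differentials assembled from the $\tau_k$, all of which commute with localization termwise, and summands whose indexing flats have collapsed rank are already accounted for by the first bullet point applied to each term.

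The main obstacle I anticipate is the regrouping step: verifying that summands indexed by flats $Y$ whose contraction $Y(P)$ has strictly smaller rank than $Y$ collapse harmlessly in the localized cokernel. This lattice-theoretic bookkeeping is the substance of the original argument in~\cite{BrandtTerao}; once it is in place, the multiplicity version follows with $\m$ playing no active role beyond its inertial presence in each $D_k$.
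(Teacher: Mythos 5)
Your plan is essentially the paper's own proof, which is just as terse: it invokes locality of the base functor $X\mapsto D(\A_X,\m_X)$, the right-exact sequences of Definition~\ref{defn:Dk}, and exactness of localization, deferring the remaining bookkeeping to~\cite[Lemma~4.12]{BrandtTerao} exactly as you do. The regrouping step you flag is not a genuine obstruction: by the inductive hypothesis a summand $D_{k-1}(\A_Y,\m_Y)_P$ with $r(Y(P))<k-1$ equals $D_{k-1}(\A_{Y(P)},\m_{Y(P)})_P$, which vanishes because $D_j(\B,\m)=0$ whenever $j>r(\B)$ (for $j=r(\B)+1$ the map $\tau_{r(\B)}$ is the isomorphism $\phi_{r(\B)}(T(\B))$ since $\B_{T(\B)}=\B$, and for larger $j$ the target is the empty sum), while the flats $Y$ with $r(Y(P))=k-1$ satisfy $Y(P)=Y$ and are exactly the rank $k-1$ flats of $\A_{X(P)}$, so the localized presentations coincide.
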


\begin{proof}
For the first bullet, use the fact that $X\to D(\A_X,\m)$ is local, the short exact sequences in Definition~\ref{defn:Dk}, and the fact that localization is an exact functor.  The second bullet follows from the first.
\end{proof}

\begin{prop}\label{prop:CMCodimK}
Let $X\in L_k$ and $I(X)\subset S$ denote the ideal generated by the linear forms $\alpha_H$ for all $H\le X$.  Then $D_k(\A_X,\m_X)$ is Cohen-Macaulay of codimension $k$ and $I(X)$ is its only associated prime.
\end{prop}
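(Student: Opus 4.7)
The plan is to induct on $k$, assuming for all arrangements $\mathcal{B}$ and all $Y \in L_j(\mathcal{B})$ with $j < k$ that $D_j(\mathcal{B}_Y, \m_Y)$ is CM of codimension $j$ with unique associated prime $I(Y)$. For the base case $k=0$, $X=V$ forces $\A_V = \emptyset$, giving $D_0(\A_V,\m_V) = \mbox{Der}_\kk(S) \cong S^\ell$, which is free and CM of codimension $0$ with associated prime $(0) = I(V)$. For $k=1$, Remark~\ref{rem:LowDescription} identifies $D_1(\A_H,\m_H) \cong S/\langle\alpha_H^{\m(H)}\rangle$, a hypersurface quotient of codimension $1$ with unique associated prime $\langle\alpha_H\rangle = I(H)$.

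For the inductive step with $X \in L_k$, $k\ge 2$, Definition~\ref{defn:Dk} applied to $\A_X$ yields the exact sequence
\[
D_{k-1}(\A_X,\m_X) \xrightarrow{\tau_{k-1}(\A_X)} \mathcal{M} \to D_k(\A_X,\m_X) \to 0,
\]
where $\mathcal{M} := \bigoplus_{Y\in L_{k-1}(\A_X)} D_{k-1}((\A_X)_Y,(\m_X)_Y)$. By induction, $\mathcal{M}$ is a direct sum of CM modules of codimension $k-1$ with associated primes $\{I(Y) : Y \in L_{k-1}(\A_X)\}$ and depth $\ell - k + 1$. First, I would show $\mathrm{Supp}(D_k(\A_X,\m_X)) \subseteq V(I(X))$ via Lemma~\ref{lem:local}: for any prime $P$ not containing $I(X)$, the flat $X(P) := \bigcap_{H \in \A_X,\,\alpha_H \in P} H$ strictly contains $X$, so $\A_{X(P)}$ has rank strictly less than $k$, forcing $D_k(\A_{X(P)},\m_{X(P)}) = 0$ and hence $D_k(\A_X,\m_X)_P = 0$. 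Since $I(X)$ is prime of height $k$, every associated prime of $D_k(\A_X,\m_X)$ contains $I(X)$.

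The remaining step, which I expect to be the main obstacle, is to upgrade this set-theoretic containment to Cohen--Macaulayness, i.e., $\mathrm{depth}(D_k(\A_X,\m_X)) \ge \ell - k$. The natural attempt is to apply the depth lemma to the short exact sequence
\[
0 \to \mathrm{im}(\tau_{k-1}(\A_X)) \to \mathcal{M} \to D_k(\A_X,\m_X) \to 0,
\]
which reduces the task to controlling $\mathrm{depth}(\mathrm{im}(\tau_{k-1}(\A_X)))$. I would combine this with Lemma~\ref{lem:local}: at every non-maximal prime $P$ in the support of $\mathcal{M}$, the localised chain complex coincides with that of the lower-rank arrangement $\A_{X(P)}$, for which the analogue of the proposition holds by induction; the hope is that this localisation forces $H^i_\mathfrak{m}(D_k(\A_X,\m_X)) = 0$ for $i < \ell - k$. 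Once the depth bound is in hand, combining with $\dim(S/I(X)) = \ell - k$ yields CM of codimension $k$, and unmixedness for CM modules then forces $I(X)$ to be the unique associated prime.
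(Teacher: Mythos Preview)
Your support computation is correct and in fact mirrors the paper's argument closely: localising at a prime $P$ not containing $I(X)$ drops the rank of the arrangement below $k$, and $D_k$ vanishes on arrangements of rank $<k$ (since $\tau_{k-1}$ is then the identity). So you have $\mathrm{Supp}(D_k(\A_X,\m_X))\subseteq V(I(X))$, hence $\dim D_k(\A_X,\m_X)\le \ell-k$.

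The gap is exactly where you say it is, and your proposed fix via the depth lemma on $0\to\mathrm{im}(\tau_{k-1})\to\mathcal{M}\to D_k\to 0$ is not going to close it cleanly. To extract $\mathrm{depth}(D_k)\ge\ell-k$ from that sequence you would need $\mathrm{depth}(\mathrm{im}(\tau_{k-1}))\ge \ell-k+1$, i.e.\ the image would have to be CM of codimension $k-1$. But $\mathrm{im}(\tau_{k-1})$ is a quotient of $D_{k-1}(\A_X,\m_X)$, and you have no inductive control over $D_{k-1}$ of a rank-$k$ arrangement; induction only tells you about $D_{k-1}$ of rank-$(k-1)$ subarrangements. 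The local-cohomology ``hope'' amounts to the same missing bound and does not follow from the localisation argument alone.

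The idea you are missing is to exploit the product structure of $\A_X$. After a linear change of coordinates, $X=V(x_1,\dots,x_k)$ and every $\alpha_H$ with $H\le X$ lies in $R:=\kk[x_1,\dots,x_k]$. Projecting with centre $X$ gives an essential rank-$k$ arrangement $\A^\pi$ in $\kk^k$ with $D_k(\A_X,\m_X)\cong D_k(\A^\pi,\m_X)\otimes_R S$. Now the variables $x_{k+1},\dots,x_\ell$ are a regular sequence on any $R$-module tensored up to $S$, so $\mathrm{depth}_S D_k(\A_X,\m_X)\ge \ell-k$ immediately, with no appeal to the defining short exact sequence. Combined with the dimension bound you already have, this gives Cohen--Macaulayness; flatness of $R\to S$ then transports the associated-prime statement from the essential case (where your support argument shows the module is Artinian, hence automatically CM with the maximal ideal as sole associated prime) back to $I(X)$. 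This reduction to the essential case is precisely what the paper does, and it is what replaces your depth-lemma manoeuvre.
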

\begin{remark}
Proposition~\ref{prop:CMCodimK} is implicit in the proof of~\cite[Proposition~4.13]{BrandtTerao}; we provide a proof for completeness.
\end{remark}
\begin{proof}
As usual, set $\ell=\dim(V)$.  By changing coordinates, we may assume $X=V(x_1,\ldots,x_k)$.  The result is clear if $k=0$ or $k=1$, so we assume $k\ge 2$.  Let $\pi_X:V\to X^\perp=W$ be the projection with center $X$ and set $R=\mbox{Sym}(W^*)\cong \kk[x_{k+1},\ldots,x_\ell]$.  Then we observe that
\begin{itemize}
\item $\A^\pi=\pi_X(\A_X)$ is an essential arrangement in $W$ of rank $\ell-k=\dim W$,
\item $D_k(\A^\pi,\m_X)\otimes_R S=D_k(\A_X,\m_X)$,
\item $x_{k+1},\ldots,x_{\ell}$ is a regular sequence on $D_k(\A_X,\m_X)$,
\item $D_k(\A_X,\m_X)/\langle x_{k+1},\ldots,x_{\ell} \rangle D_k(\A_X,\m_X)\cong D_k(\A^\pi,\m_X)$,
\item and $\mbox{Ass}(D_k(\A_X,\m_X))=\{PS|P\in\mbox{Ass}(D_k(\A^\pi,\m_X))\},$
\end{itemize}
where the final bullet point follows from~\cite[Theorem~23.2]{Matsumura}, which describes behavior of associated primes under flat extensions.  Hence it suffices to show that the only associated prime of $D_k(\A,\m)$ when $k=r(\A)=\dim V$ is the maximal ideal of $S$.  Consider the short exact sequence
\[
0\rightarrow D_{k-1}(\A,\m)\rightarrow \cD^{k-1}=\bigoplus\limits_{X\in L_{k-1}} D_{k-1}(\A_X,\m_X) \rightarrow D_k(\A,\m) \rightarrow 0
\]
from Definition~\ref{defn:Dk}, and localize at a prime $P\in\mbox{Spec}(S)$.  If $\mbox{codim}(P)\le k-1$, then by induction either $\cD^{k-1}_P$ vanishes (in which case $D_k(\A,\m)_P=0$) or $P=I(X)$ for some $X\in L$ of codimension $k-1$ and $\cD^{k-1}_P=D_{k-1}(\A_X,\m_X)_P$.  In the latter case, localizing the exact sequence above at $P=I(X)$ and using Lemma~\ref{lem:local} yields the exact sequence
\[
0\rightarrow D_{k-1}(\A_X,\m_{X})_{I(X)}\rightarrow D_{k-1}(\A_X,\m_X)_{I(X)} \rightarrow D_k(\A,\m)_{I(X)} \rightarrow 0,
\]
so clearly $D_k(\A,\m)_{I(X)}=0$.  Hence the only prime in the support of $D_k(\A,\m)$ is the homogeneous maximal ideal.
\end{proof}

\begin{proof}[Proof of Theorem~\ref{thm:Free}]
By Lemma~\ref{lem:H0Der}, $D(\A,\m)\cong H^0(\cD^\bullet(\A,\m))$.  Now we use the following result of Schenck and Stiller (see also~\cite{Spect}).
\begin{thm}\cite[Theorem~3.4]{CohVan}\label{thm:Schenck}
Suppose $C^\bullet=0\rightarrow C^0 \rightarrow C^1 \rightarrow C^2 \rightarrow \cdots\rightarrow C^t \rightarrow 0$ is a complex of $S=\kk[x_1,\ldots,x_\ell]$-modules so that, for $k=0,\ldots t$,
\begin{itemize}
\item $C^k$ is Cohen-Macaulay of codimension $k$
\item $H^k(\mathcal{C^\bullet})$ is supported in codimension $\ge k+2$.
\end{itemize}
Then $H^0(C^\bullet)$ is free if and only if $H^k(C^\bullet)=0$ for $k>0$ and locally free if and only if $H^k(C^\bullet)$ has finite length for $k>0$.
\end{thm}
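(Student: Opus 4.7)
The plan is to combine induction on the ambient dimension $\ell$ with the two hypercohomology spectral sequences for $R\Gamma_\mathfrak{m}$ applied to $C^\bullet$:
\[
{}'E_1^{p,q}=H^q_\mathfrak{m}(C^p)\quad\text{and}\quad {}''E_2^{p,q}=H^p_\mathfrak{m}(H^q(C^\bullet)),
\]
both converging to $\mathbb{H}^{p+q}_\mathfrak{m}(C^\bullet)$. Since each $C^p$ is Cohen--Macaulay of codimension $p$, $H^q_\mathfrak{m}(C^p)$ vanishes unless $q=\ell-p$, so ${}'E_1$ lives on the antidiagonal $p+q=\ell$ and every differential leaves this line; thus $\mathbb{H}^n_\mathfrak{m}(C^\bullet)=0$ for $n\neq\ell$. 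The base cases $\ell\le 2$ of the induction are immediate because the support hypothesis forces $H^k(C^\bullet)=0$ automatically for $k\ge 1$.

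The direction ``vanishing $\Rightarrow$ freeness'' is unconditional. If $H^k(C^\bullet)=0$ for all $k\ge 1$, the complex $0\to H^0(C^\bullet)\to C^0\to\cdots\to C^t\to 0$ is exact. Writing $K_k=\ker\delta^k$ (with $K_t=C^t$), the depth lemma applied to $0\to K_k\to C^k\to K_{k+1}\to 0$ together with $\mathrm{depth}(C^k)=\ell-k$ inductively yields $\mathrm{depth}(K_k)\ge \ell-k$; in particular $\mathrm{depth}(H^0(C^\bullet))\ge \ell$, so $H^0(C^\bullet)$ is free by Auslander--Buchsbaum.

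For the converse, suppose $H^0(C^\bullet)$ is free with $\ell\ge 3$. For any non-maximal prime $\mathfrak{p}\subset S$ of codimension $c<\ell$, the localized complex $C^\bullet_\mathfrak{p}$ satisfies the hypotheses of the theorem over the regular local ring $S_\mathfrak{p}$ (in ambient dimension $c$): each $C^k_\mathfrak{p}$ is either zero or Cohen--Macaulay of codimension $k$ over $S_\mathfrak{p}$, and $H^k(C^\bullet)_\mathfrak{p}$ inherits the support condition. Since $H^0(C^\bullet)_\mathfrak{p}$ is free over $S_\mathfrak{p}$, the induction hypothesis gives $H^k(C^\bullet)_\mathfrak{p}=0$ for every $k\ge 1$. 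Hence each $H^k(C^\bullet)$ with $k\ge 1$ is supported only at $\mathfrak{m}$ and has finite length.

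To upgrade finite length to zero, ${}''E$ now has a very simple shape: for $q\ge 1$, ${}''E_2^{p,q}$ vanishes except at $p=0$ where it equals $H^q(C^\bullet)$; and freeness of $H^0(C^\bullet)$ gives ${}''E_2^{p,0}=0$ for $p<\ell$. Every differential $d_r:{}''E_r^{0,q}\to {}''E_r^{r,q-r+1}$ with $1\le q\le \ell-2$ has zero target: if $q-r+1\ge 1$ then the target sits in a positive column and vanishes because the finite-length module $H^{q-r+1}(C^\bullet)$ has $H^r_\mathfrak{m}=0$ for $r\ge 1$; the one remaining case $r=q+1$ has target ${}''E_r^{q+1,0}$, which vanishes because $q+1<\ell$ and $H^0(C^\bullet)$ has depth $\ell$. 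Therefore ${}''E_\infty^{0,q}=H^q(C^\bullet)$; matching against $\mathbb{H}^q_\mathfrak{m}(C^\bullet)=0$ forces $H^q(C^\bullet)=0$ for $1\le q\le \ell-2$, and the support hypothesis handles $q\ge \ell-1$ directly. The locally-free statement falls out of the same induction: $H^0(C^\bullet)$ is locally free iff $H^0(C^\bullet)_\mathfrak{p}$ is free for every non-maximal $\mathfrak{p}$, iff (by the freeness case over $S_\mathfrak{p}$) each $H^k(C^\bullet)_\mathfrak{p}=0$ for $k\ge 1$ and non-maximal $\mathfrak{p}$, iff each $H^k(C^\bullet)$ has finite length. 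The main technical obstacle is maintaining consistent codimension, depth, and support bookkeeping under localization; the spectral-sequence steps themselves are short once the structural constraints are in place.
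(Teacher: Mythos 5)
The paper does not prove this statement itself --- it is imported verbatim as \cite[Theorem~3.4]{CohVan} --- so there is no internal proof to compare against; your argument is, however, correct and is essentially the standard Schenck--Stiller argument: the two local-cohomology hypercohomology spectral sequences, the depth chase via Auslander--Buchsbaum for the easy direction, and localization to reduce the hard direction to the finite-length situation where the second spectral sequence degenerates along the $p=0$ column. The only point worth tightening is that the induction on ambient dimension passes through the regular local rings $S_{\mathfrak{p}}$ rather than polynomial rings, so the statement being proved inductively should be phrased for regular local rings (every ingredient you use --- Auslander--Buchsbaum, depth-sensitivity of $H^i_{\mathfrak{m}}$, preservation of Cohen--Macaulayness and codimension under localization --- holds there verbatim). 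With that rephrasing the proof is complete.
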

\noindent By Proposition~\ref{prop:CMCodimK}, $\cD^k=\cD^k(\A,\m)$ is Cohen-Macaulay of codimension $k$.  So we need to show that $H^k(\cD^\bullet)$ is supported in codimension at least $k+2$.  We use the fact that taking homology commutes with localization.  So let $P$ be a prime and consider the localized complex
\[
\cD^\bullet(\A,\m)_P=\cdots\rightarrow \cD^{k-1}_P \xrightarrow{\delta^{k-1}_P} \cD^k_P \xrightarrow{\delta^k_P} \cD^{k+1}_P \rightarrow\cdots
\]
If $\mbox{codim}(P)\le k$, then we have seen in the proof of Proposition~\ref{prop:CMCodimK} that the localized map $\delta^{k-1}_P$ becomes an isomorphism, hence $H^k(\cD^\bullet)_P=H^k(\cD^\bullet_P)=0$.  Now suppose $\mbox{codim}(P)=k+1$.  If $P\neq I(X)$ for some $X\in L$ of codimension $k+1$, then let $X\in L_i$ ($i\le k$) be the flat of maximal rank so that $I(X)\subset P$.  If $r(X)\le k-1$ then $H^k(\cD^\bullet_P)=0$ by Proposition~\ref{prop:CMCodimK}.  So suppose $X$ has codimension $k$.  Then the localized map $\delta^{k-1}_P$ becomes an isomorphism again as in the proof of Proposition~\ref{prop:CMCodimK}.

Finally suppose $P=I(X)$ for some $X\in L_{k+1}$.  Localizing yields
\[
\bigoplus\limits_{\substack{Y\ge X \\ r(Y)=k-1}} D_{k-1}(\A_Y,\m_Y)_{P} \xrightarrow{\delta_P^{k-1}} \bigoplus\limits_{\substack{Z\ge X\\ r(Z)=k}} D_k(\A_Z,\m_Z)_{P} \xrightarrow{\delta^k_P} D_{k+1}(\A_X,\m_X)_{P}.
\]
By definition $\delta^{k-1}$ factors through $D_k(\A,\m)$.  Hence $H^k(\cD^\bullet)_P$ is the middle homology of the three term complex
\[
0\rightarrow D_k(\A_X,\m_X)_P \xrightarrow{(\tau_k)_P} \bigoplus\limits_{\substack{Z\ge X\\ r(Z)=k}} D_k(\A_Z,\m_Z)_{P} \xrightarrow{\delta^k_P} D_{k+1}(\A_X,\m_X)_{P}\rightarrow 0,
\]
which is exact by Definition~\ref{defn:Dk}.  It follows that $H^k(\cD^\bullet)$ is supported in codimension $\ge k+2$.
\end{proof}

\begin{remark}
In the case of a simple arrangement, the forward implication of Theorem~\ref{thm:Free} follows from~\cite[Proposition~4.13]{BrandtTerao}.
\end{remark}



Theorem~\ref{thm:Schenck} arises from a studying the hyperExt modules of $\cD^\bullet(\A,\m)$.  Without the vanishing assumptions we may obtain the following.

\begin{prop}\label{prop:BoundingProjectiveDimension}
Set $p_i=\mbox{pdim}(H^i(\cD^\bullet(\A,\m)))$ for $i>0$.  Then 
\[
\mbox{pdim}(D(\A,\m))\le \max\limits_{i>0}\{p_i-i-1\},
\]
with equality if there is a single $i>0$ for which $H^i(\cD^\bullet(\A,\m))\neq 0$.
\end{prop}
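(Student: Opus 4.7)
The idea is to dissect $\cD^\bullet(\A,\m)$ into short exact sequences and propagate projective dimensions using the long exact sequence of Tor. Let $Z^k=\ker(\delta^k)$ and $B^k=\mbox{image}(\delta^{k-1})$, with the conventions $B^0=0=B^{r+1}$ where $r=r(\A)$. This produces short exact sequences
\[
0\to Z^k\to \cD^k\to B^{k+1}\to 0 \qquad\text{and}\qquad 0\to B^k\to Z^k\to H^k(\cD^\bullet)\to 0,
\]
and $D(\A,\m)=Z^0$ by Lemma~\ref{lem:H0Der}. By Proposition~\ref{prop:CMCodimK} together with the graded Auslander-Buchsbaum formula, each $\cD^k$ has projective dimension exactly $k$.

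For the upper bound, I would iteratively apply the standard inequality $\mbox{pdim}(A)\le\max(\mbox{pdim}(B),\mbox{pdim}(C)-1)$ coming from a short exact sequence $0\to A\to B\to C\to 0$, doing descending induction on $k$ starting from $B^{r+1}=0$. This yields
\[
\mbox{pdim}(Z^k)\le\max\bigl(k,\,\max_{j>k,\,H^j\neq 0}\{p_j-(j-k)-1\}\bigr).
\]
Setting $k=0$ gives $\mbox{pdim}(D(\A,\m))\le\max(0,\max_{i>0}\{p_i-i-1\})$. The $0$ is absorbed because the proof of Theorem~\ref{thm:Free} showed that $H^j(\cD^\bullet)$ is supported in codimension at least $j+2$, so $p_j\ge j+2$ whenever $H^j\neq 0$, forcing $p_j-j-1\ge 1>0$.

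For the equality case with a unique $i>0$ satisfying $H^i\neq 0$, I would first observe that $B^k=Z^k$ for $k>i$, which combined with $\mbox{pdim}(\cD^k)=k$ and a short descending induction gives $\mbox{pdim}(Z^k)=k$ for all $i\le k\le r$. The long exact sequence of Tor applied to $0\to B^i\to Z^i\to H^i\to 0$ together with the strict inequality $p_i\ge i+2$ forces $\mbox{pdim}(B^i)=p_i-1$ (the top Tor of $H^i$ has nowhere to die). Then for $k=i-1,i-2,\ldots,0$, the inequality $\mbox{pdim}(B^{k+1})-1>\mbox{pdim}(\cD^k)=k$, which at each step reduces to $p_i\ge i+2$, upgrades the SES bound for $Z^k$ to an equality $\mbox{pdim}(Z^k)=\mbox{pdim}(B^{k+1})-1$ via the long exact sequence of Tor. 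Iterating down to $k=0$ delivers $\mbox{pdim}(D(\A,\m))=p_i-i-1$.

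The main obstacle is ensuring that the inductive SES-bound inequalities become genuine equalities in the equality-case argument; this is a direct consequence of the strict codimension bound $p_i\ge i+2$ from Theorem~\ref{thm:Free}, which guarantees that the top-degree Tor of $H^i$ survives through every propagation step down the complex.
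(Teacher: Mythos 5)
Your argument is correct, but it takes a genuinely different (and more self-contained) route than the paper, whose proof of Proposition~\ref{prop:BoundingProjectiveDimension} is by citation to the hyperExt/spectral-sequence argument of~\cite[Lemma~4.11]{Spect} (see also~\cite[\S~3]{GSplinesGraphicArr}): there one compares the two hypercohomology spectral sequences of $\cD^\bullet(\A,\m)$, using that $\mbox{Ext}^q(\cD^k,S)$ is concentrated in $q=k$ because $\cD^k$ is Cohen--Macaulay of codimension $k$. Your dévissage into the short exact sequences $0\to Z^k\to\cD^k\to B^{k+1}\to 0$ and $0\to B^k\to Z^k\to H^k\to 0$, together with the Tor long exact sequences, is an elementary repackaging of the same mechanism: the descending induction for the upper bound is exactly the $E_\infty$-degeneration estimate, and your observation that $p_j\ge j+2$ (from the support-in-codimension-$\ge j+2$ statement in the proof of Theorem~\ref{thm:Free} plus Auslander--Buchsbaum) correctly absorbs the trivial term $\mbox{pdim}(\cD^0)=0$ and, in the single-nonvanishing-cohomology case, is precisely what prevents the top Tor of $H^i$ from being cancelled, forcing $\mbox{pdim}(B^i)=p_i-1$ and then $\mbox{pdim}(Z^k)=\mbox{pdim}(B^{k+1})-1$ all the way down to $\mbox{pdim}(D(\A,\m))=p_i-i-1$. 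What your approach buys is a proof readable with only the Tor characterization of projective dimension, at the cost of some bookkeeping; the spectral sequence buys brevity and the simultaneous treatment of all cohomologies. One cosmetic caveat: the assertion $\mbox{pdim}(Z^k)=k$ for all $i\le k\le r$ should be read with $r$ replaced by the largest index $k$ with $\cD^k\neq 0$ (for instance $\cD^k=0$ for $k>2$ when $\A$ is $TF_2$ of higher rank); since $\cD^i\neq 0$ whenever $H^i\neq 0$, the truncated induction still yields the only fact you use, namely $\mbox{pdim}(Z^i)=i$, so the conclusion is unaffected.
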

\begin{proof}
See~\cite[Lemma~4.11]{Spect} or~\cite[\S~3]{GSplinesGraphicArr}.
\end{proof}

\subsection{A combinatorial bound on projective dimension}

We close this section by extending a combinatorial bound on projective dimension due to Kung and Schenck for simple arrangements~\cite[Corollary~2.3]{HalKung}.  Recall that a generic arrangement of rank $\ell$ is one in which the intersection of every subset of $k\le\ell$ hyperplanes has codimension $k$.

\begin{cor}\label{cor:pdimcircuit}
Let $(\A,\m)$ be a multi-arrangement.  If $\A_X$ is generic with $|\A_X|>r(X)$, then $\mbox{pdim}(D(\A,\m))\ge r(X)-2$.  In particular, if the matroid of $\A$ has a closed circuit of length $m$, then $\mbox{pdim}(D(\A,\m))\ge m-3$.
\end{cor}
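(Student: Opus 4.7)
The plan is to reduce via Proposition~\ref{prop:pdimLB} to the case that $\A$ itself is generic of rank $r = r(\A)$ with $|\A| = n > r$, and to establish directly that $\mbox{pdim}(D(\A, \m)) = r - 2$ by collapsing $\cD^\bullet(\A, \m)$. After essentializing (splitting off free derivations $\partial_i$ in coordinate directions orthogonal to the span of the $\alpha_H$'s, which does not affect projective dimension), I may further assume ambient dimension is $r$; the case $r \le 2$ is trivial by Lemma~\ref{lem:globalUB}, so I take $r \ge 3$.

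The central computational claim is that $\cD^k(\A, \m) = 0$ for every $k \ge 2$. For any $X \in L_k(\A)$ with $2 \le k \le r-1$, genericity of $\A$ forces $\A_X$ to be Boolean of rank $k$. Choosing coordinates so the defining forms of $\A_X$ are $x_1,\dots,x_k$, a direct chase of the diagram in Definition~\ref{defn:Dk} shows that $D_1(\A_X,\m_X) \cong \bigoplus_{i=1}^k S/(x_i^{m_i})$ and that $\tau_1 : D_1(\A_X,\m_X) \to \cD^1(\A_X,\m_X)$ is literally the identity on this direct sum, so $D_2(\A_X,\m_X) = 0$; an induction then gives $D_j(\A_X,\m_X) = 0$ for all $j \ge 2$. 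The only remaining contribution to $\cD^r(\A,\m)$ comes from the unique rank-$r$ flat (the center of $\A$) and equals $D_r(\A,\m)$, but this is the cokernel of a map out of $\cD^{r-1}(\A,\m) = 0$ and hence also vanishes.

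With $\cD^k = 0$ for $k \ge 2$, the complex collapses to $0 \to S^r \xrightarrow{\delta^0} \bigoplus_H S/(\alpha_H^{m_H}) \to 0$, where $\delta^0$ is the coefficient matrix from Remark~\ref{rem:LowDescription}. Its degree-zero piece is a $\kk$-linear map $\kk^r \to \kk^n$ which cannot be surjective since $n > r$, so $H^1(\cD^\bullet(\A,\m)) \neq 0$. By Lemma~\ref{lem:local}, the stalks of $H^1$ at primes $P$ with $X(P) \neq 0$ vanish (because $\A_{X(P)}$ is Boolean, hence free), so $H^1$ is supported only at the maximal ideal, has finite length, and satisfies $p_1 := \mbox{pdim}(H^1) = r$ by Auslander--Buchsbaum. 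Since $i = 1$ is the unique positive index with $H^i \neq 0$, Proposition~\ref{prop:BoundingProjectiveDimension} yields $\mbox{pdim}(D(\A,\m)) = p_1 - 2 = r - 2$. The ``in particular'' statement follows by noting that a closed circuit of length $m$ in the matroid of $\A$ determines a flat $X \in L_{m-1}(\A)$ for which $\A_X$ consists of exactly these $m$ hyperplanes in generic position, so the first part applies with $r(X) = m-1$. The main subtlety throughout is verifying the identity claim for $\tau_1$ on Boolean subarrangements by carefully tracking $p_0(V(x_i))$ and the induced $\phi_1(V(x_i))$ through Definition~\ref{defn:Dk}; everything else is a routine application of tools already in the paper.
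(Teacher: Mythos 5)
Your proof is correct and follows essentially the same route as the paper: reduce by Proposition~\ref{prop:pdimLB} to an essential generic arrangement, show $\cD^k(\A,\m)=0$ for $k\ge 2$ (the paper does this by checking $D_2(\A_Y,\m_Y)=0$ for all $Y\in L_2$, you via the Boolean structure of all proper subarrangements), note $H^1\neq 0$ from the degree-zero count $|\A|>r$, use Lemma~\ref{lem:local} to see $H^1$ is supported only at the maximal ideal, and conclude with Proposition~\ref{prop:BoundingProjectiveDimension}. The only nitpick is a wording slip: $D_r(\A,\m)$ is a quotient of $\cD^{r-1}(\A,\m)=0$, being the cokernel of the map $\tau_{r-1}$ \emph{into} $\cD^{r-1}$ rather than of a map out of it, but your conclusion is unaffected.
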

\begin{proof}
If $r(\A)=2$ the statement is trivial so we will assume $r(\A)>2$.  Suppose $\A_X$ is generic with $|\A_X|>r(X)$.  By Proposition~\ref{prop:pdimLB}, it suffices to show that $\mbox{pdim}(D(\A_X,\m_X))\ge r(X)-2$.  So we assume $\A=\A_X$ is essential and generic of rank $r$ with $|\A|>r$ and prove $\mbox{pdim}(D(\A,\m))=r-2$.

In this case we claim the chain complex $\cD^\bullet(\A,\m)$ has the form $S^r \xrightarrow{\delta^0} \bigoplus\limits_{H\in\A}\dfrac{S}{J(H)}$, where $J(H)=\langle \alpha_H^{\m(H)}\rangle$.  That $\cD^0=S^r$ and $\cD^1=\bigoplus_{H\in\A} S/J(H)$ follows from the definition of $\cD^\bullet$ and Remark~\ref{rem:LowDescription}.  To prove that $\cD^k=0$ for $k>1$, it suffices to show that $D_2(\A_Y,\m_Y)=0$ for all $Y\in L_2$.  We have
\[
D_2(\A_Y,\m_Y)=\mbox{coker}\left( S^{r}\xrightarrow{\delta^0_Y} \bigoplus_{H\in\A_Y}\dfrac{S}{J(H)}\right).
\]
Since $\A$ is generic, the set $\{\alpha_H:H\in\A_Y\}$ consists of $r(Y)$ linearly independent forms and the coefficient matrix $\delta^1_Y$ has full rank.  So $D_2(\A_Y,\m_Y)=0$.

It follows that $H^1(\cD^\bullet(\A,\m))=\mbox{coker}(\delta^0)$.  Since $|\A|>r$, we see that $\delta^0$ cannot be surjective, so $H^1(\cD^\bullet(\A,\m))\neq 0$.  We show that $H^1(\cD^\bullet)$ is only supported at the maximal ideal.  To this end, let $P\in\mbox{spec}(S)$ be a prime of codimension $k\le r-1$.  Write $X(P)=\bigcap\limits_{\substack{H\in\A_X\\ \alpha_H\in P}} H$.  Since $\A$ is generic, $\{\alpha_H: \alpha_H\in P\}$ consists of at most $k$ linearly independent forms, so up to a change of coordinates $\A_{X(P)}$ is union of coordinate hyperplanes.  By Lemma~\ref{lem:local}, $\cD^\bullet(\A,\m)_P\cong \cD^\bullet(\A_{X(P)},\m_{X(P)})_P$.  The chain complex $\cD^\bullet(\A_{X(P)},\m_{X(P)})$ has the form $S^r\xrightarrow{\delta^0_{X(P)}} \bigoplus\limits_{H\in\A_{X(P)}} \dfrac{S}{J(H)}$, and $\delta^0_{X(P)}$ is clearly surjective, so
\[
H^1(\cD^\bullet(\A,\m))_P\cong H^1(\cD^\bullet(\A,\m)_P)\cong H^1(\cD^\bullet(\A_{X(P)},\m_{X(P)})_P)=0.
\]
It follows that $H^1(\cD^\bullet(\A,\m))$ is only supported at the maximal ideal.  Since $H^1(\cD^\bullet(\A,\m))\neq 0$, $\mbox{pdim}(H^1(\cD^\bullet))=r$ and by Proposition~\ref{prop:BoundingProjectiveDimension}, $\mbox{pdim}(D(\A,\m))=r-2$, the maximal projective dimension.
\end{proof}

\begin{remark}
Corollary~\ref{cor:pdimcircuit} implies that generic arrangements are totally non-free; this was first proved by Yoshinaga~\cite{YoshExtendability}.
\end{remark}

\begin{remark}
Even for simple arrangements, the lower bound given by Corollary~\ref{cor:pdimcircuit} may be arbitrarily far off from the actual projective dimension.  See Remark~\ref{rem:GeneralizedX3}.
\end{remark}

\section{Multi-arrangements and $k$-formality}\label{sec:Formal}
In this section we will show that if $(\A,\m)$ is a free multi-arrangement then $\A$ is $k$-formal (in the sense of~\cite{BrandtTerao}) for $2\le k\le r-1$, where $r=r(\A)$ is the rank of $\A$ (thus generalizing the result of Brandt and Terao~\cite{BrandtTerao} to multi-arrangements).  Once we have set up the notation, this is an immediate corollary of Theorem~\ref{thm:Free}.

We again follow the presentation in~\cite{BrandtTerao}.  Fix an arrangement $\A=\cup_{H\in\A} V(\alpha_H)\subset V$.  Set $E(\A):=\bigoplus_{H\in\A} e_H\kk$ and define $\phi:E(\A)\rightarrow V^*$ by $\phi(e_H)=\alpha_H$.  Put $F(\A)=\ker(\phi)$; this is called the \textit{relation space} of $\A$.

The arrangement $\A$ is $2$-\textit{formal} (or just \textit{formal}) if the relation space is generated by relations among three linear forms.  Since three linear forms are dependent if and only if they define a codimension two flat, $2$-formality is equivalent to surjectivity of the map
\[
\pi_2:\bigoplus_{X\in L_2} F(\A_X)\rightarrow F(\A),
\]
where $\pi_2$ is the sum of natural inclusions $F(\A_X)\hookrightarrow F(\A)$ for each $X\in L_2$.

\begin{defn}\label{defn:Rk}
Set $R_0:=T(\A)^*\subset V^*$, where $T(\A)=\cap_{H\in\A} H$.  For $1\le k\le r$, recursively define $R_k(\A)$ as the kernel of the map
\[
\pi_{k-1}=\pi_{k-1}(\A):=\bigoplus_{X\in L_{k-1}} R_{k-1}(\A_X)\rightarrow R_{k-1}(\A),
\]
where $\pi_k$ is the sum of natural inclusions for $0\le k\le r-1$.  To simplify notation, set $\cR_k=\cR_k(\A)=\bigoplus_{X\in L_k} R_k(\A_X)$.
\end{defn}

\begin{remark}
After chasing through the definitions one can see that $R_1(\A)$ is the kernel of the restriction map $V^*\rightarrow T(\A)^*$ and $R_2(\A)=F(\A)$.  See~\cite{BrandtTerao} for details.
\end{remark}

\begin{defn}
The arrangement $\A$ is
\begin{itemize}
\item $2$-formal if $\A$ is formal
\item $k$-formal, for $3\le k\le r-1$, if $\A$ is $(k-1)$-formal and the map $\pi_k:\cR_k=\bigoplus_{X\in L_k} R_k(\A_X)\rightarrow R_k(\A)$ is surjective.
\end{itemize}
\end{defn}

In~\cite{StefanFormal}, Tohaneanu gives a homological formulation of $k$-formality as follows.  First, notice that there is a natural differential $\delta_k: \cR_k\rightarrow \cR_{k-1}$ (similar to the differential for $\cD^\bullet$) defined as the composition $\cR_k \rightarrow R_k(\A) \xrightarrow{\pi_{k-1}} \cR_{k-1}$.

\begin{lem}\cite[Lemma~2.5]{StefanFormal}\label{lem:HomologicalCharFormality}
With the differentials $\delta_k$, $1\le k\le r$, the vector spaces $\cR_i$ ($0\le k\le r$) form a chain complex $\cR_\bullet=\cR_\bullet(\A)$.  The arrangement $\A$ is $k$-formal if and only if $H_i(\cR_\bullet)=0$ for $i=1,\ldots,k-1$.
\end{lem}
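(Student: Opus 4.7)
The plan is to recast the differential $\delta_k$ as the composition of a canonical map $\pi_k$ onto $R_k(\A)$ with the tautological inclusion $R_k(\A)=\ker(\pi_{k-1})\hookrightarrow\cR_{k-1}$, after which the complex property and the homology identification become essentially bookkeeping. Explicitly, for each $k$ one has the map $\pi_k\colon\cR_k\to R_k(\A)$ (the sum of natural inclusions of the summands), and by the recursive definition of $R_k(\A)$ there is a canonical injection $\iota_k\colon R_k(\A)\hookrightarrow\cR_{k-1}$. First I would verify that the differential described in the excerpt agrees with $\iota_k\circ\pi_k$; this reduces to checking that each summand map $R_k(\A_X)\hookrightarrow\cR_{k-1}(\A_X)\hookrightarrow\cR_{k-1}(\A)$ lands inside $R_k(\A)\subseteq\cR_{k-1}(\A)$, which follows from compatibility of the inductively defined maps $\pi_{k-1}(\A_X)$ with $\pi_{k-1}(\A)$.

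With the factorization $\delta_k=\iota_k\circ\pi_k$ in hand, one immediately has
\[
\delta_{k-1}\circ\delta_k=\iota_{k-1}\circ(\pi_{k-1}\circ\iota_k)\circ\pi_k=0,
\]
because $\pi_{k-1}\circ\iota_k$ vanishes by the very definition of $R_k(\A)=\ker(\pi_{k-1})$. This establishes $\cR_\bullet$ as a chain complex. For the homology, since $\iota_i$ is injective one has $\ker(\delta_i)=\ker(\pi_i)=R_{i+1}(\A)$; likewise $\mathrm{im}(\delta_{i+1})=\iota_{i+1}(\mathrm{im}(\pi_{i+1}))$, which under the identification of $R_{i+1}(\A)$ with its image in $\cR_i$ is simply $\mathrm{im}(\pi_{i+1})\subseteq R_{i+1}(\A)$. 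Consequently,
\[
H_i(\cR_\bullet)\cong\mathrm{coker}(\pi_{i+1}),
\]
which vanishes if and only if $\pi_{i+1}\colon\cR_{i+1}\to R_{i+1}(\A)$ is surjective.

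To finish, I would unwind the inductive definition of formality: $\A$ is $k$-formal if and only if $\pi_j$ is surjective for every $2\le j\le k$. Re-indexing via $j=i+1$, this matches precisely with the vanishing of $H_i(\cR_\bullet)$ for $1\le i\le k-1$. The main obstacle is the very first step: tracking the several natural inclusions and verifying that the informally-stated differential in the excerpt truly factors as $\iota_k\circ\pi_k$. Once this identification is secure, both the chain-complex property and the formality criterion fall out of the single relation $\pi_{k-1}\circ\iota_k=0$.
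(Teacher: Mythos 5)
Your proof is correct and is exactly the intended argument: the paper does not reprove this lemma (it is quoted from~\cite{StefanFormal}), and the verification is just as you describe, since $\delta_k$ is by definition the composite of $\pi_k\colon\cR_k\to R_k(\A)$ with the inclusion of $R_k(\A)=\ker(\pi_{k-1})$ into $\cR_{k-1}$, so that $\delta_{k-1}\circ\delta_k=0$ and $H_i(\cR_\bullet)\cong\mbox{coker}(\pi_{i+1})$. Unwinding the inductive definition, $k$-formality is precisely surjectivity of $\pi_2,\ldots,\pi_k$ (using $R_2(\A)=F(\A)$), which matches the vanishing of $H_1,\ldots,H_{k-1}$ after your re-indexing, and the compatibility of the maps $\pi_{k-1}(\A_X)$ with $\pi_{k-1}(\A)$ that you flag is indeed the only point requiring care (it is built into the Brandt--Terao definition of the natural inclusions).
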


\begin{remark}
If $\m\equiv 1$  (so $(\A,\m)$ is a simple arrangement) we will denote $D_k(\A,\m)$ (recall Definition~\ref{defn:Dk}) and $\cD^\bullet(\A,\m)$ (recall Definition~\ref{defn:DerivationComplex}) by $D_k(\A)$ and $\cD^\bullet(\A)$, respectively.
\end{remark}

Brandt and Terao show that the vector spaces $R_k(\A)$ are dual to the degree zero part of $D_k(\A)$.

\begin{prop}\cite[Proposition~4.10]{BrandtTerao}\label{prop:Duality}
For $0\le k\le r$, $D_k(\A)_0\cong R_k(\A)^*$, where $R_k(\A)^*$ is the $\kk$-vector space dual of $R_k(\A)$.
\end{prop}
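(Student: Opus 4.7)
The plan is to prove the statement by induction on $k$, exploiting the four-term exact sequence coming from Definition~\ref{defn:Dk} together with the analogous exact sequence defining $R_k(\A)$. The base case $k=0$ is direct: $D(\A)_0$ consists of constant-coefficient derivations $\theta = \sum_i c_i \partial_i$ with $\theta(\alpha_H) \in \alpha_H S$, and since $\theta(\alpha_H)$ is a scalar the condition forces $\theta(\alpha_H) = 0$ for every $H \in \A$. Hence $D(\A)_0$ may be identified with $T(\A) = \bigcap_{H \in \A} H$, and since $T(\A)$ is finite-dimensional we have $D(\A)_0 \cong T(\A) \cong (T(\A)^*)^* = R_0(\A)^*$.

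For the inductive step, take the degree-zero part of the four-term exact sequence from Definition~\ref{defn:Dk},
\[
0 \to K_k(\A)_0 \to D_{k-1}(\A)_0 \xrightarrow{(\tau_{k-1})_0} \bigoplus_{X \in L_{k-1}} D_{k-1}(\A_X)_0 \to D_k(\A)_0 \to 0,
\]
which remains exact since all maps are graded. Each term is finite-dimensional over $\kk$, as each $D_j(\A_X,\m_X)$ is a finitely generated graded $S$-module (being built by iterated cokernels from $D(\A)$). Dualizing over $\kk$ gives
\[
0 \to D_k(\A)_0^* \to \bigoplus_{X \in L_{k-1}} D_{k-1}(\A_X)_0^* \xrightarrow{(\tau_{k-1})_0^*} D_{k-1}(\A)_0^* \to K_k(\A)_0^* \to 0.
\]
The inductive hypothesis applied to $\A$ and to each $\A_X$ identifies $D_{k-1}(\A)_0^* \cong R_{k-1}(\A)$ and $D_{k-1}(\A_X)_0^* \cong R_{k-1}(\A_X)$.

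The key step is to verify that under these identifications $(\tau_{k-1})_0^*$ agrees with $\pi_{k-1}$. This is proved simultaneously with the main duality by induction: $\tau_{k-1}$ is the sum of the maps $\phi_{k-1}(Y)$ built inductively by the commutative diagram in Figure~\ref{fig:Dk}, while $\pi_{k-1}$ is the sum of the natural maps $R_{k-1}(\A_Y) \to R_{k-1}(\A)$. For $k = 1$ one checks directly that $(\phi_0(V))_0^* \colon V^* \to T(\A)^*$ is the restriction map, matching $\pi_0$. For higher $k$ the compatibility is inherited from the previous step via the commuting square in Figure~\ref{fig:Dk}: dualizing the left square in degree zero and invoking induction converts it into the natural compatibility diagram for inclusions of $R_{k-1}$-spaces, which forces $(\phi_{k-1}(Y))_0^*$ to equal the natural inclusion $R_{k-1}(\A_Y) \hookrightarrow R_{k-1}(\A)$. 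Once this is established, $D_k(\A)_0^* \cong \ker(\pi_{k-1}) = R_k(\A)$, and taking duals again (permissible by finite-dimensionality) yields $D_k(\A)_0 \cong R_k(\A)^*$.

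The main obstacle is the inductive bookkeeping needed to identify $(\tau_{k-1})_0^*$ with $\pi_{k-1}$. The definitions on each side are packaged very differently — one via cokernels and projections from Figure~\ref{fig:Dk}, the other via inclusions of kernels — so the compatibility must be extracted carefully from the commutativity of the defining diagram rather than from any direct structural identification; all other ingredients are formal exactness and linear-algebra duality.
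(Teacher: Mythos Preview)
Your argument is correct, but there is no proof in the paper to compare it to: the proposition is quoted directly from \cite[Proposition~4.10]{BrandtTerao} and the paper supplies no argument of its own. Your inductive approach via the degree-zero piece of the four-term sequence in Definition~\ref{defn:Dk}, dualized and matched against the defining sequence for $R_k(\A)$, is exactly the natural way to establish the duality (and is in the spirit of Brandt--Terao's original treatment). The one place that deserves care is your identification of $(\tau_{k-1})_0^*$ with $\pi_{k-1}$; you handle it correctly by carrying the compatibility of $\phi_{k-1}(Y)_0^*$ with the natural inclusions through the induction, using that $\phi_k(Y)$ is \emph{uniquely} determined by the commuting right-hand square in Figure~\ref{fig:Dk}, so its dual is likewise pinned down once the middle map dualizes to the obvious inclusion $\bigoplus_{X\le Y} R_{k-1}(\A_X)\hookrightarrow \bigoplus_{X\in L_{k-1}} R_{k-1}(\A_X)$ (here one also uses $(\A_Y)_X=\A_X$ for $X\le Y$). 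One minor quibble: for $k=1$ the map $\pi_0:V^*\to T(\A)^*$ is restriction rather than an inclusion (the paper's phrase ``natural inclusions'' is slightly loose at $k=0$, as the subsequent Remark acknowledges), and you already treat it that way.
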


\begin{lem}\label{lem:DegZeroGen}
The modules $D_k(\A,\m)$ for $1\le k\le r$ are generated in degree zero.  More precisely, we have an isomorphism (as $\kk$-vector spaces) $D_k(\A,\m)_0\cong D_k(\A)_0$.
\end{lem}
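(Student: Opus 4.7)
The plan is to induct on $k$, establishing both the generation claim and the isomorphism simultaneously. Two base observations drive the argument: first, $\cD^0 \cong \mbox{Der}_\kk(S) \cong S^\ell$ is generated in degree zero by $\partial_1,\ldots,\partial_\ell$; and second, by Remark~\ref{rem:LowDescription}, $D_1(\A_H,\m_H) \cong S\partial_H/J(H)$ with $J(H)=\langle \alpha_H^{\m(H)}\rangle$, so the hypothesis $\m(H)\ge 1$ forces $J(H)_0=0$ and therefore $D_1(\A_H,\m_H)_0 = \kk\partial_H$, the same as in the simple case.

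For the generation statement, I would induct on $k\ge 1$. By Definition~\ref{defn:Dk}, $D_k(\A,\m)$ is a quotient of $\cD^{k-1}$. When $k=1$ the source $\cD^0\cong S^\ell$ is generated in degree zero, so its quotient $D_1(\A,\m)$ is as well. For $k\ge 2$, applying the inductive hypothesis to each subarrangement $\A_X$ with $X\in L_{k-1}$ gives that every summand $D_{k-1}(\A_X,\m_X)$ of $\cD^{k-1}$ is generated in degree zero, so $\cD^{k-1}$ and hence $D_k(\A,\m)$ are as well.

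For the isomorphism $D_k(\A,\m)_0 \cong D_k(\A)_0$, I would apply the exact functor $(-)_0$ (restriction to degree zero is exact on graded $S$-modules) to the right-exact sequence
\[
D_{k-1}(\A,\m) \xrightarrow{\tau_{k-1}} \cD^{k-1} \longrightarrow D_k(\A,\m) \longrightarrow 0
\]
from Definition~\ref{defn:Dk}. At the base of the induction, I would check $D(\A,\m)_0 = D(\A)_0$ directly: a degree-zero derivation $\theta=\sum c_i\partial_i$ satisfies $\theta(\alpha_H)=\sum_i c_i a_{iH} \in \alpha_H^{\m(H)}S$ iff this scalar vanishes (since $\m(H)\ge 1$), which is the same condition that cuts out $D(\A)_0$. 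For the inductive step, I would use the second base observation together with the inductive hypothesis applied to each $\A_X$ to identify $\cD^{k-1}_0$ and $D_{k-1}(\A,\m)_0$ with their simple-case counterparts, whence the right-exact sequence matches the corresponding one for $\A$ and the cokernels agree.

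The main obstacle I anticipate is verifying that the restricted map $(\tau_{k-1})_0$ literally coincides with its simple analogue under the inductive identifications, not merely that source and target do. This compatibility should follow from unwinding the construction of $\phi_k(Y)$ in Figure~\ref{fig:Dk}, which is built from natural projections and diagram-chase-induced maps that each commute with restriction to degree zero; concretely, the inclusion $D(\A,\m)\hookrightarrow D(\A)$ (an equality in degree zero) induces compatible comparison morphisms between the two complexes at every level, which after restriction to degree zero become an isomorphism by induction. With this compatibility in hand, the cokernel identification $D_k(\A,\m)_0 \cong D_k(\A)_0$ is immediate.
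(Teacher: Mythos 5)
Your proposal is correct and follows essentially the same route as the paper: induct on $k$, note that $D_k(\A,\m)$ is a quotient of $\cD^{k-1}$ whose summands are generated in degree zero by induction, and apply the (exact) degree-zero restriction to the right-exact sequence from Definition~\ref{defn:Dk}, comparing it with the simple-arrangement sequence via a commutative diagram whose first two vertical maps are isomorphisms by induction. Your extra care about the compatibility of $(\tau_{k-1})_0$ with its simple analogue is exactly the commutativity the paper asserts without elaboration, and your naturality argument for it is sound.
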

\begin{proof}
Both claims are clear for $D_1(\A,\m)$ by Remark~\ref{rem:D1}.  By Definition~\ref{defn:Dk}, $D_k(\A,\m)$ is a quotient of $\bigoplus_{X\in L_k} D_{k-1}(\A_X,\m_X)$.  Hence by induction, $D_k(\A,\m)$ is also generated in degree zero.  Now we have the following commutative diagram:
\begin{center}
\begin{tikzcd}
D_{k-1}(\A,\m)_0 \ar{r}{\tau_{k-1}(\A)} \ar{d}{\cong} & \bigoplus\limits_{X\in L_{k-1}} D_{k-1}(\A_X,\m_X)_0 \ar{r}\ar{d}{\cong} & D_k(\A,\m)_0 \ar{r} & 0\\
D_{k-1}(\A)_0 \ar{r}{\tau_{k-1}(\A)} & \bigoplus\limits_{X\in L_{k-1}} D_{k-1}(\A_X)_0 \ar{r} & D_k(\A)_0 \ar{r} & 0,
\end{tikzcd}
\end{center}
where the first two vertical maps are isomorphisms by induction.  Hence there is also an isomorphism $D_k(\A,\m)_0\cong D_k(\A)$.
\end{proof}

\begin{cor}\label{cor:HomologicalCharFormality}
An arrangement $\A$ is $k$-formal if and only if $H^i(\cD^\bullet(\A,\m)_0)=0$ for $i=1,\ldots,k-1$.
\end{cor}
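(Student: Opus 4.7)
The plan is to reduce to the simple arrangement case, use Brandt and Terao's duality between $D_k(\A)_0$ and $R_k(\A)^*$ to identify $\cD^\bullet(\A,\m)_0$ with the $\kk$-linear dual of $\cR_\bullet(\A)$, and then invoke Tohaneanu's homological criterion for $k$-formality (Lemma~\ref{lem:HomologicalCharFormality}).

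First I would observe that by Lemma~\ref{lem:DegZeroGen}, we have isomorphisms $D_k(\A,\m)_0\cong D_k(\A)_0$ for each $k$. The commutative square appearing in the proof of that lemma extends termwise to an isomorphism of cochain complexes of $\kk$-vector spaces $\cD^\bullet(\A,\m)_0\cong \cD^\bullet(\A)_0$, since the differentials $\delta^k$ in Definition~\ref{defn:DerivationComplex} are built from the maps $\tau_k(\A)$ and the natural surjections, both of which are compatible with passing between multiplicities $\m$ and $\m\equiv 1$ in degree zero. This reduces the claim to showing that for a simple arrangement, $H^i(\cD^\bullet(\A)_0)=0$ for $i=1,\ldots,k-1$ if and only if $\A$ is $k$-formal.

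Next I would apply Proposition~\ref{prop:Duality} termwise to obtain $\kk$-vector space isomorphisms
\[
\cD^k(\A)_0=\bigoplus_{X\in L_k} D_k(\A_X)_0 \;\cong\; \bigoplus_{X\in L_k} R_k(\A_X)^* \;=\; \cR_k(\A)^*.
\]
The key compatibility step is to verify that under these isomorphisms, the coboundary $\delta^k$ on $\cD^\bullet(\A)_0$ is the $\kk$-linear dual of the boundary $\delta_{k+1}$ on $\cR_\bullet(\A)$. This reduces to checking that the Brandt--Terao duality is natural with respect to the structural maps: namely, that the projection maps $p_k(Y)$ assembling into $\tau_k(\A)$ in Definition~\ref{defn:Dk} are dual to the natural inclusions $R_k(\A_X)\hookrightarrow R_k(\A)$ assembling into $\pi_k(\A)$ in Definition~\ref{defn:Rk}. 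This is essentially built into the Brandt--Terao construction, since their duality is induced inductively by the same short exact sequences, but it is the main place where one must chase diagrams carefully.

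Once naturality is established, we obtain an isomorphism of cochain complexes $\cD^\bullet(\A)_0\cong \cR_\bullet(\A)^*$ of finite-dimensional $\kk$-vector spaces. Since $\kk$-linear dualization is exact, it commutes with taking (co)homology, giving $H^i(\cD^\bullet(\A)_0)\cong H_i(\cR_\bullet(\A))^*$. Thus vanishing of $H^i(\cD^\bullet(\A,\m)_0)$ for $i=1,\ldots,k-1$ is equivalent to vanishing of $H_i(\cR_\bullet(\A))$ in the same range, which by Lemma~\ref{lem:HomologicalCharFormality} is equivalent to $k$-formality of $\A$. The main obstacle, as noted, is the naturality check identifying the two differentials under duality; everything else is formal unwinding of the definitions and results already available in the excerpt.
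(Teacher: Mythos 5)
Your proposal is correct and follows the same route as the paper, whose proof simply cites Lemma~\ref{lem:DegZeroGen}, Proposition~\ref{prop:Duality}, and Lemma~\ref{lem:HomologicalCharFormality} as an immediate combination; you merely make explicit the compatibility of differentials under the Brandt--Terao duality that the paper leaves implicit.
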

\begin{proof}
Immediate from Lemma~\ref{lem:DegZeroGen}, Lemma~\ref{lem:HomologicalCharFormality}, and Proposition~\ref{prop:Duality}.
\end{proof}

\begin{defn}\label{defn:TotallyFormal}
An arrangement $\A$ is \textit{totally formal} if $\A_X$ is $k$-formal for $2\le k\le r(X)$ for all $X\in L(\A)$.
\end{defn}

For example, a rank three arrangement is totally formal if and only if it is formal.  See Remark~\ref{rem:GraphicTotallyFormal} for further examples of totally formal arrangements.

\begin{cor}\label{cor:MultifreeFormal}
If $(\A,\m)$ is free then $\A$ is totally formal.
\end{cor}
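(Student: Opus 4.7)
The plan is to reduce the statement to Corollary~\ref{cor:HomologicalCharFormality} by applying Theorem~\ref{thm:Free} at every closed subarrangement and then passing to the degree-zero part of the derivation complex.

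First I would observe that freeness descends along closed subarrangements. By Proposition~\ref{prop:pdimLB}, for every $X\in L(\A)$ we have
\[
\mbox{pdim}(D(\A_X,\m_X))\le \mbox{pdim}(D(\A,\m))=0,
\]
so $(\A_X,\m_X)$ is itself free. Applying Theorem~\ref{thm:Free} to this localized multi-arrangement gives $H^k(\cD^\bullet(\A_X,\m_X))=0$ for all $k>0$.

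Next I would pass to degree zero. The modules $\cD^k(\A_X,\m_X)$ are graded $S$-modules and the differentials $\delta^k$ are $S$-linear, hence homogeneous of degree $0$. Consequently $\cD^\bullet(\A_X,\m_X)_0$ is a subcomplex of $\kk$-vector spaces, the complex splits as a direct sum over internal degrees, and cohomology commutes with extracting the degree-$0$ part:
\[
H^k(\cD^\bullet(\A_X,\m_X)_0)\;=\;H^k(\cD^\bullet(\A_X,\m_X))_0\;=\;0 \quad\text{for all } k>0.
\]
By Corollary~\ref{cor:HomologicalCharFormality}, this vanishing implies that $\A_X$ is $k$-formal for every $k$ in the relevant range $2\le k\le r(X)$. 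Since $X\in L(\A)$ was arbitrary, Definition~\ref{defn:TotallyFormal} yields that $\A$ is totally formal.

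The argument is essentially an assembly of three results already at our disposal, so there is no substantive obstacle; the only point requiring care is the identification $H^k(\cD^\bullet(\A_X,\m_X))_0=H^k(\cD^\bullet(\A_X,\m_X)_0)$, which follows at once from the fact that the differentials preserve internal grading. If one wished to emphasize the use of Theorem~\ref{thm:Free} rather than Proposition~\ref{prop:pdimLB} to get freeness of $(\A_X,\m_X)$, one could alternatively note that Theorem~\ref{thm:Free} combined with Lemma~\ref{lem:local} forces the homology of $\cD^\bullet(\A_X,\m_X)$ to vanish after localization at $I(X)$, and then use Proposition~\ref{prop:CMCodimK} to conclude global vanishing; but the route through Proposition~\ref{prop:pdimLB} is cleaner.
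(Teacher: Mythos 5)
Your proof is correct and uses exactly the same ingredients as the paper — Proposition~\ref{prop:pdimLB} to descend freeness to each $\A_X$, Theorem~\ref{thm:Free} for vanishing of $H^k(\cD^\bullet(\A_X,\m_X))$, and Corollary~\ref{cor:HomologicalCharFormality} via the degree-zero part; the paper merely phrases the same chain contrapositively. The grading observation you flag (that the degree-preserving differentials let one identify $H^k(\cD^\bullet(\A_X,\m_X)_0)$ with $H^k(\cD^\bullet(\A_X,\m_X))_0$) is also implicitly used in the paper's argument, so there is no real difference in approach.
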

\begin{proof}
Suppose to the contrary that $\A_X$ is not $k$-formal for some $X\in L$ and $2\le k\le r(X)-1$.  Then, by Corollary~\ref{cor:HomologicalCharFormality}, $H^i(\cD^\bullet_0(\A_X,\m_X))\neq 0$ for some $1\le i\le k-1$.  Hence by Theorem~\ref{thm:Free}, $D(\A_X,\m_X)$ is not free, whence $D(\A,\m)$ is not free by Proposition~\ref{prop:pdimLB}.
\end{proof}

\begin{remark}
We will see in Proposition~\ref{prop:H2Pres} that there are totally formal arrangements which nevertheless are totally non-free.  See also Example~\ref{ex:ZieglerPair}.
\end{remark}

\begin{remark}\label{rem:CombFreeObstFromFormality}
The ranks of the vector spaces appearing in $\cR_\bullet$ are not combinatorial in general (see Example~\ref{ex:ZieglerPair}), however if $\A$ is totally formal then these ranks are determined by $L(\A)$.  We can see this by inductively reading off the rank of $R_k(\A_X)$ $(X\in L_k)$ from the Euler characteristic of $\cR_\bullet(\A_X)$; since $\A$ is totally formal the Euler characteristic of $\cR_\bullet(\A_X)$ is zero by Lemma~\ref{lem:HomologicalCharFormality}.  This yields a number of combinatorial obstructions to freeness which can be read off $L(\A)$ (see for instance~\cite[Corollary~4.16]{BrandtTerao}).  By Corollary~\ref{cor:MultifreeFormal}, if any of these combinatorial obstructions are satisfied, the arrangement is totally non-free.
\end{remark}


In the following corollary, we call a hyperplane $H\in\A$ \textit{generic} if, for all $X\in L_2$ so that $H<X$ in $L$, there is a unique hyperplane $H'\neq H$ so that $H'<X$.  Moreover, we say $H$ is a \textit{separator} of $\A$ if $r(\A-H)<r(\A)$.  Part of the following result may be found in~\cite[Proposition~3.9]{BrandtTerao}; we provide a proof for completeness.

\begin{cor}\label{cor:genericHyperplane}
Suppose $\A$ is an arrangement of rank $\ge 2$.  If $\A$ has a generic hyperplane which is not a separator, then $\A$ is not formal.  In particular, $\A$ is totally non-free.
\end{cor}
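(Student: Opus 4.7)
The plan is to produce a direct combinatorial obstruction to $2$-formality, then invoke Corollary~\ref{cor:MultifreeFormal} to promote this to total non-freeness. Recall that $\A$ is $2$-formal precisely when the map
\[
\pi_2:\bigoplus_{X\in L_2} F(\A_X)\to F(\A)
\]
is surjective, where each map $F(\A_X)\hookrightarrow F(\A)$ is induced by the inclusion of $\A_X$ in $\A$. My strategy is to exhibit, under the hypotheses, a relation in $F(\A)$ that cannot lie in the image of $\pi_2$.

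First I would analyze the image of $\pi_2$ at the coefficient $e_H$. If $H$ is generic, then for every $X\in L_2$ with $H<X$, the closed subarrangement $\A_X$ consists of exactly two hyperplanes $H$ and $H'$; since $\alpha_H$ and $\alpha_{H'}$ are linearly independent (as $H\ne H'$ are distinct hyperplanes through a rank~$2$ flat), $F(\A_X)=0$. For $X\in L_2$ with $H\not<X$, the inclusion $F(\A_X)\hookrightarrow F(\A)$ uses only basis vectors $e_{H'}$ with $H'\ne H$. Consequently every element of $\mathrm{Im}(\pi_2)$ has zero coefficient on $e_H$.

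Next I would use the hypothesis that $H$ is not a separator, i.e.\ $r(\A-H)=r(\A)$. This means $\alpha_H$ lies in the $\kk$-span of $\{\alpha_{H'} : H'\in\A,\ H'\ne H\}$, so there exist scalars $c_{H'}$ with $\alpha_H=\sum_{H'\ne H} c_{H'}\alpha_{H'}$. The element $e_H-\sum_{H'\ne H}c_{H'}e_{H'}$ therefore lies in $F(\A)=\ker\phi$ and has nonzero $e_H$-coefficient. By the previous paragraph this element is not in $\mathrm{Im}(\pi_2)$, so $\pi_2$ fails to be surjective and $\A$ is not formal.

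Finally, for the second assertion, suppose for contradiction that $(\A,\m)$ is free for some multiplicity $\m$. By Corollary~\ref{cor:MultifreeFormal}, $\A$ is totally formal, and in particular $2$-formal, contradicting what was just shown. Hence $\A$ admits no free multiplicity. I anticipate no serious obstacle: the whole argument is a one-paragraph verification once the hypotheses are translated into statements about $\pi_2$, and the bookkeeping on the $e_H$-coefficient is the only thing that needs care.
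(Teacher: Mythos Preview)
Your proposal is correct and follows essentially the same approach as the paper's own proof. Both arguments exhibit a relation in $F(\A)$ supported on $e_H$ (using that $H$ is not a separator) and then observe that, since $H$ is generic, no length-three relation can involve $e_H$, so this relation lies outside $\mathrm{Im}(\pi_2)$; the only difference is cosmetic---the paper phrases things in terms of rows of the coefficient matrix $\delta^0_S$ rather than the basis vectors $e_H$, and your version spells out a bit more explicitly why $F(\A_X)=0$ when $|\A_X|=2$.
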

\begin{proof}
Let $H\in\A$ be the generic hyperplane which is not a separator, and write $v_H$ for the corresponding row of $\delta^0_S$.  The condition that $H$ is not a separator means that we can find $r=r(\A)$ linearly independent rows $v_1,\ldots,v_r$ of $\delta^0_S$ where $v_i\neq v_H$ for $i=1,\ldots,r$.  Hence there is a relation $\sum_{i=1}^r c_iv_i+c_Hv_H=0$ (for constants $c_1,\ldots,c_H$).  Since $r\ge 2$ and $H$ is generic, there is no way to write this relation as a linear combination of relations among three hyperplanes (since $v_H$ is not in the support of any such relation).  So $\A$ is not formal.  The final conclusion follows from Corollary~\ref{cor:MultifreeFormal}.
\end{proof}

%

\section{Computing the chain complex}\label{sec:Computations}

In this section we work out concrete presentations for the modules appearing in $\cD^\bullet(\A,\m)$ and illustrate the constructions via examples, with the goal of studying freeness and projective dimension of $D(\A,\m)$.  The following definition, which constructs $\cD^\bullet$ as the cokernel of a map of chain complexes, is analogous to the setup of the Billera-Schenck-Stillman chain complex used in algebraic spline theory~\cite{Homology,LCoho}.  Since there are many details, the reader may find it easiest to read the following constructions while following along with Examples~\ref{ex:PointsP1} and~\ref{ex:X3}.

\begin{defn}\label{defn:FormalityComplex}
For a multi-arrangement $(\A,\m)$, set $S_k(\A_X)=D_k(\A_X,\m_X)_0\otimes_{\kk}S$, the degree zero part of  $D_k(\A_X,\m_X)$ tensored with $S$, and set $\cS^\bullet(\A):=\cD^\bullet(\A,\m)_0\otimes_{\kk}S$, so $\cS^k=\bigoplus_{X\in L_k} S_k(\A_X)$.  These are independent of the choice of multiplicities by Lemma~\ref{lem:DegZeroGen}.

For $Y\in L$, write $\phi^S_k(Y),\tau^S_k$ for the maps $\phi^S_k(Y):S_k(\A)\rightarrow S_k(\A_Y),\tau^S_k:S_k(\A)\rightarrow \bigoplus_{X\in L_k} S_k(\A_X)$ which are obtained from the maps $\phi_k(Y):D_k(\A,\m)\rightarrow D_k(\A_Y,\m),\tau_k:D_k(\A,\m)\rightarrow \bigoplus_{X\in L_k} D(\A_X,\m_X)$ (see Definition~\ref{defn:Dk}) by restricting to degree zero and then tensoring with $S$.  Likewise write $\delta_S^i$ for the differential of $\cS^\bullet$.

Since each of the modules $D_k(\A,\m)$ is generated in degree zero by Lemma~\ref{lem:DegZeroGen}, there is a natural surjective map $S_k(\A_X)\rightarrow D_k(\A_X,\m_X)$ for every $\m$ and $X\in L_k$.  Hence there is a surjective map of complexes $\cS^\bullet(\A)\rightarrow \cD^\bullet(\A,\m)$ for any multiplicity $\m$.
	
For each surjection $S_k(\A_X)\rightarrow D_k(\A_X,\m_X)$, write $J_k(\A_X,\m_X)$ for the kernel of this surjection, and write $\cJ^\bullet(\A,\m)$ for the kernel of the surjection $\cS^\bullet(\A)\rightarrow \cD^\bullet(\A,\m)$, so $\cJ^k(\A,\m)=\bigoplus_{X\in L_k} J_k(\A_X,\m_X)$.  Denote by $\phi^J_i(Y),\tau^J_i,$ and $\delta_J^i$ the maps obtained from restricting $\phi^S_i(Y),\tau^S_i,$ and $\delta_S^i$.  See figure~\ref{fig:FormalityComplexes} which shows the short exact sequence of complexes $0\rightarrow \cJ^\bullet \rightarrow \cS^\bullet\rightarrow \cD^\bullet \rightarrow 0$.
\end{defn}

\begin{figure}
\centering
\begin{tikzcd}
\cJ^\bullet(\A,\m) \cdots \ar{r} & \bigoplus\limits_{X\in L_{k-1}} J_{k-1}(\A_X,\m_X) \ar{r}{\delta_J^{k-1}} \ar{d} & \bigoplus\limits_{Y\in L_k} J_k(\A_Y,\m) \ar{d}\ar{r}{\delta_J^k} &\cdots 	\\	
\cS^\bullet(\A) \cdots \ar{r} & \bigoplus\limits_{X\in L_{k-1}} S_{k-1}(\A_X) \ar{r}{\delta_S^{k-1}} \ar{d} & \bigoplus\limits_{Y\in L_k} S_k(\A_Y) \ar{d}\ar{r}{\delta_S^k} &\cdots \\
\cD^\bullet(\A,\m) \cdots \ar{r} & \bigoplus\limits_{X\in L_{k-1}} D_{k-1}(\A_X,\m_X) \ar{r}{\delta^{k-1}} & \bigoplus\limits_{Y\in L_k} D_k(\A_Y,\m) \ar{r}{\delta^k} &\cdots
\end{tikzcd}
\caption{Short exact sequence of complexes from Definition~\ref{defn:FormalityComplex}}\label{fig:FormalityComplexes}
\end{figure}

\begin{remark}\label{rem:FormalS}
By Corollary~\ref{cor:HomologicalCharFormality}, $\A$ is $k$-formal if and only if $H^i(\cS^\bullet(\A))=0$ for $1\le i\le k-1$.  Furthermore $\A$ is essential if and only if $H^0(\cS^\bullet(\A))=0$.
\end{remark}

\begin{remark}\label{rem:LES}
	The short exact sequence $0\rightarrow \cJ^\bullet\rightarrow \cS^\bullet \rightarrow \cD^\bullet\rightarrow 0$ gives rise to a long exact sequence starting as
	\[
	0\rightarrow H^0(\cS^\bullet) \rightarrow H^0(\cD^\bullet)\cong D(\A,\m)\xrightarrow{\psi} H^1(\cJ^\bullet) \rightarrow H^1(\cS^\bullet)\rightarrow\cdots,
	\]
	where $\psi$ is defined on $\theta\in D(\A,\m)$ as $\psi(\theta)=\sum_{H\in L_1} \theta(\alpha_H)\in \oplus_{H\in L_1} J(H)$.  The map $\psi$ is an isomorphism if (and only if) $\A$ is essential and formal.
\end{remark}

\begin{remark}\label{rem:DJiso}
If $\A$ is essential and $k$-formal for all $k\ge 2$, then the long exact sequence from Remark~\ref{rem:LES} breaks into isomorphisms $H^i(\cD^\bullet(\A,\m))\cong H^i(\cJ^\bullet(\A,\m))$ for $i\ge 0$ (by Remark~\ref{rem:FormalS}).  In particular, if we wish to determine free multiplicities on an arrangement, we may assume by Corollary~\ref{cor:MultifreeFormal} that $\A$ is $k$-formal for all $k\ge 2$, hence the isomorphism $H^i(\cD^\bullet(\A,\m))\cong H^i(\cJ^\bullet(\A,\m))$ holds for $i\ge 0$.
\end{remark}

\begin{lem}\label{lem:ComputingJk}
Let $(\A,\m)$ be a multi-arrangement.  If $H\in L_1$, then set $J(H)=J_1(\A_H,\m(H))=\langle \alpha_H^{\m(H)}\rangle$.  If $X\in L_k$ where $k>1$, then the module $J_k(\A_X,\m_X)$ satisfies
\[
\begin{array}{rl}
J_k(\A_X,\m_X) &=\delta^{k-1}_S\left(\bigoplus\limits_{\substack{Y\in L_{k-1}\\ X<Y}} J_{k-1}(\A_Y,\m_Y) \right)\\[20 pt]
 & =\sum\limits_{\substack{Y\in L_{k-1}\\ X<Y}} \phi^S_{k-1}(X)(J_{k-1}(\A_Y,\m_Y))
\end{array}
\]
with $\delta^{k-1}_S:\cS^{k-1}\rightarrow\cS^k$ and $\phi_S^k(X):S_k(\A_Y)\rightarrow S_k(\A_X)$ the maps from Definition~\ref{defn:FormalityComplex}.
\end{lem}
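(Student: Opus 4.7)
The plan is to proceed by induction on $k$. For $k=1$, the claim is essentially a restatement of Remark~\ref{rem:LowDescription}: the module $D_1(\A_H,\m(H))$ is isomorphic to $S/\langle \alpha_H^{\m(H)}\rangle$, generated in degree zero by the class of $\partial_H$, so the surjection $S_1(\A_H)\cong S \twoheadrightarrow D_1(\A_H,\m(H))$ has kernel $\langle\alpha_H^{\m(H)}\rangle$.

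For the inductive step, fix $X \in L_k$ with $k\ge 2$. Using Definition~\ref{defn:Dk} together with the identification $(\A_X)_Y = \A_Y$ for $Y\supseteq X$, I would first write down the right-exact sequence
\[
D_{k-1}(\A_X, \m_X) \xrightarrow{\tau_{k-1}(\A_X)} \bigoplus_{Y} D_{k-1}(\A_Y, \m_Y) \to D_k(\A_X, \m_X) \to 0,
\]
where $Y$ ranges over flats in $L_{k-1}$ containing $X$. Running the same reasoning for $\A_X$ with trivial multiplicity, restricting to degree zero (where everything is generated by Lemma~\ref{lem:DegZeroGen}), and tensoring with $S$, yields an analogous right-exact sequence
\[
S_{k-1}(\A_X) \xrightarrow{\tau_{k-1}^S(\A_X)} \bigoplus_Y S_{k-1}(\A_Y) \xrightarrow{\pi} S_k(\A_X) \to 0.
\]
These two sequences fit into a commutative diagram with vertical maps the natural surjections $S_i \twoheadrightarrow D_i$ having kernels $J_i$ (using the inductive hypothesis on the middle column).

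A snake-lemma-style diagram chase then identifies $J_k(\A_X,\m_X)$ with $\pi\bigl(\bigoplus_Y J_{k-1}(\A_Y,\m_Y)\bigr)$. The inclusion $\supseteq$ is immediate since $\pi$ kills the image of $\tau_{k-1}^S(\A_X)$ and the vertical surjections commute with the horizontal maps. For $\subseteq$, I would lift $g\in J_k(\A_X,\m_X)$ to some $\tilde g \in \bigoplus_Y S_{k-1}(\A_Y)$, note that its image in $\bigoplus_Y D_{k-1}(\A_Y,\m_Y)$ lies in the image of $\tau_{k-1}(\A_X)$, and then correct $\tilde g$ by the $\tau_{k-1}^S(\A_X)$-image of a lift of this preimage to obtain an element of $\bigoplus_Y J_{k-1}(\A_Y,\m_Y)$ mapping to $g$.

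The main obstacle I anticipate is notational rather than conceptual: matching the map $\pi$ in my diagram with the maps $\delta_S^{k-1}$ and $\phi^S_{k-1}(X)$ as used in the statement. This amounts to verifying that the component of $\delta_S^{k-1}$ from $S_{k-1}(\A_Y)$ to $S_k(\A_X)$ vanishes unless $X\subseteq Y$ and otherwise agrees with the corresponding summand of $\pi$, which follows by unwinding the inductive definition of $\phi_k(Y)$ in parallel for $\cD^\bullet$ and $\cS^\bullet$. Once that identification is made, both forms of the equality in the lemma are manifestly the same.
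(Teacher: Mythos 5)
Your argument is correct and is essentially the paper's proof: the paper reduces to $\A=\A_X$ and reads surjectivity of $\delta^{k-1}_J$ off the long exact sequence of $0\rightarrow\cJ^\bullet\rightarrow\cS^\bullet\rightarrow\cD^\bullet\rightarrow 0$, using that the tail cohomologies of $\cS^\bullet$ and $\cD^\bullet$ vanish by construction, which is exactly the snake-lemma chase you carry out on the two right-exact defining sequences, together with the same unwinding of the maps $\phi_k$ to match your $\pi$ with the components of $\delta^{k-1}_S$. One small remark: the induction on $k$ is not actually needed, since $J_{k-1}(\A_Y,\m_Y)$ is by definition (Definition~\ref{defn:FormalityComplex}) the kernel of the surjection $S_{k-1}(\A_Y)\rightarrow D_{k-1}(\A_Y,\m_Y)$, which is all your chase uses.
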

\begin{proof}
For simplicity we take $\A_X=\A$, so $\A$ has rank $k$ and $X=\cap_{H\in\A} H$.  The tail end of the short exact sequence of complexes $0\rightarrow \cJ^\bullet\rightarrow \cS^\bullet\rightarrow \cD^\bullet \rightarrow 0$ is shown below.
\begin{center}
\begin{tikzcd}
\cJ^{k-2}\ar{r}{\delta^{k-2}_J}\ar{d} & \cJ^{k-1}=\bigoplus\limits_{Y\in L_{k-1}} J_{k-1}(\A_Y,\m_Y)\ar{r}{\delta^{k-1}_J}\ar{d} & \cJ^k=J_k(\A,\m)\ar{d} \\
\cS^{k-2}\ar{r}{\delta^{k-2}_S}\ar{d} & \cS^{k-1}=\bigoplus\limits_{Y\in L_{k-1}} S_{k-1}(\A_Y)\ar{r}{\delta^{k-1}_S}\ar{d} & \cS^k=S_k(\A)\ar{d}\\
\cD^{k-2}\ar{r}{\delta^{k-2}} & \cD^{k-1}=\bigoplus\limits_{Y\in L_{k-1}} D_{k-1}(\A,\m) \ar{r}{\delta^{k-1}} & \cD^k=D_k(\A,\m)\\
\end{tikzcd}
\end{center}
The differentials $\delta_S^{k-2}$ and $\delta^{k-2}$ factor through $S_{k-1}(\A)$ and $D_{k-1}(\A,\m)$, respectively, by Definition~\ref{defn:DerivationComplex}.  It follows that $H^{k-1}(\cS^\bullet)=H^{k-1}(\cD^\bullet)=H^k(\cS^\bullet)=H^k(\cD^\bullet)=0$ by Definition~\ref{defn:Dk}.  Hence the long exact sequence in cohomology yields that $H^k(\cJ^\bullet)=0$, in other words $\delta^{k-1}_J$ is surjective.  The first equality follows from commutativity of the diagram.  By definition, $\delta^k_J=\tau^J_k=\sum_{Y\in L_{k-1}} \phi_{k-1}^J(X)$.  Since $\phi_{k-1}^J(X)$ is the restriction of $\phi_{k-1}^S(X)$, this proves the second equality.
\end{proof}

From Lemma~\ref{lem:ComputingJk}, we see that in order to explicitly determine the complexes $\cJ^\bullet$ and $\cD^\bullet$, it suffices to determining the maps $\phi^S_{k}(Y)$ for $Y\in L_{k}$, or equivalently to determine the differential $\delta^k_S$ of the complex $\cS^\bullet$.  In \S~\ref{sec:Formal}, we saw that $\cS^\bullet\cong (\cR_\bullet^*)\otimes_{\kk} S$, so the differential $\delta^k_S$ is just the transpose of the differential $\delta_k$ in the complex $\cR_\bullet$.  By examining these matrices as they appear in~\cite{BrandtTerao} and~\cite{StefanFormal}, we obtain the following recipe for constructing $\delta^k_S$.

\begin{lem}\label{lem:SkDifferential}
A matrix for $\delta^k_S$ may be inductively defined as follows.  The matrix for $\delta^0_S$ is the coefficient matrix for $\A$, whose rows give coefficients of the linear forms defining $\A$.  Inductively, $\delta^k_S$ may be represented by a matrix whose rows are naturally grouped according to flats $X\in L_k$.  A row corresponding to $X\in L_k$ encodes a relation among rows of $\delta^{k-1}_S$ which correspond to flats $Y\in L_{k-1}$ so that $Y<X$; the set of all rows corresponding to $X\in L_k$ is a choice of basis for all relations among the rows of $\delta^{k-1}_S$ corresponding to flats $Y\in L_k$ so that $Y<X$.
\end{lem}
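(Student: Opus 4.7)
The plan is to reduce the statement to the Brandt-Terao complex $\cR_\bullet$ via the duality $\cS^\bullet \cong \cR_\bullet^* \otimes_\kk S$. Combining Proposition~\ref{prop:Duality} with Lemma~\ref{lem:DegZeroGen} yields $\cS^k \cong \cR_k^* \otimes_\kk S$ as $S$-modules, and each differential of $\cS^\bullet$ is identified with the $\kk$-linear dual of the corresponding Brandt-Terao differential, extended $S$-linearly. Under this identification, a block-structured matrix for a differential of $\cS^\bullet$ whose row-blocks are indexed by flats $X$ amounts to recording, for each such $X$, a chosen $\kk$-basis of some $R_k(\A_X)$ realized inside $\cR_{k-1}$.

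The base case unwinds Definition~\ref{defn:Rk} in rank one: $\cR_0 = V^*$ and $\cR_1 = \bigoplus_{H\in\A} R_1(\A_H) = \bigoplus_{H\in\A} \kk\cdot\alpha_H$, with $\delta_1$ the sum of the natural inclusions $\kk\alpha_H \hookrightarrow V^*$. In the obvious bases, its matrix has rows indexed by $H \in \A$ giving the coefficient vectors of the $\alpha_H$; that is, the coefficient matrix of $\A$.

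For the inductive step, I assume that the matrix built through the preceding rank has, for each $Y \in L_{k-1}$, its $Y$-block of rows realizing a chosen $\kk$-basis of $R_{k-1}(\A_Y) \hookrightarrow \cR_{k-2}$. By Definition~\ref{defn:Rk} applied to $\A_X$ for $X \in L_k$, the space $R_k(\A_X) \subseteq \cR_{k-1}(\A_X) = \bigoplus_{Y<X,\,Y\in L_{k-1}} R_{k-1}(\A_Y)$ is the kernel of the sum-of-inclusions map into $R_{k-1}(\A_X) \subset \cR_{k-2}$. Unwinding, an element of $R_k(\A_X)$ is a tuple $(v_Y)_{Y<X,\,Y\in L_{k-1}}$ of $\kk$-combinations of the chosen basis vectors of the $R_{k-1}(\A_Y)$ satisfying $\sum v_Y = 0$ in $\cR_{k-2}$ — in other words, a $\kk$-linear relation among precisely those previously-built rows indexed by $Y<X$, $Y\in L_{k-1}$. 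Choosing a $\kk$-basis of this relation space and indexing it by $X\in L_k$ produces the new block of rows with exactly the asserted structure.

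The only real difficulty is bookkeeping: one must verify that the embeddings $R_{k-1}(\A_Y) \hookrightarrow \cR_{k-2}$ produced inductively are compatible with the natural inclusions $R_{k-1}(\A_Y) \hookrightarrow R_{k-1}(\A_X)$ appearing in $\pi_{k-1}(\A_X)$, so that the kernel condition defining $R_k(\A_X)$ translates faithfully into the linear-relation condition on rows. This is a naturality statement with respect to the subarrangement inclusions $\A_Y \subseteq \A_X \subseteq \A$, routine in content but the most plausible source of an indexing error in a careful write-up; once in place, the lemma follows by induction on $k$.
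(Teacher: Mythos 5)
Your proof is correct and takes essentially the same route as the paper: identify $\cS^\bullet$ with $\cR_\bullet^*\otimes_\kk S$ so that $\delta^k_S$ is the transpose of the Brandt--Terao differential, and then read the row-block recipe off the recursive Definition~\ref{defn:Rk} of the relation spaces $R_k(\A_X)$ as kernels of sums of natural inclusions. The only difference is that you carry out this last verification (base case plus the naturality of the inclusions into $\cR_{k-2}$) directly, whereas the paper delegates it to the explicit matrices in~\cite{BrandtTerao} and~\cite{StefanFormal}; this is a harmless elaboration rather than a different argument.
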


\begin{exm}[Points in $\mathbb{P}^1$]\label{ex:PointsP1}
Consider the arrangement $\A$ of $k+2$ points in $\mathbb{P}^1$, corresponding to the product $xy(x-a_1y)\ldots(x-a_ky)$.  Let $H_x=V(x),H_y=V(y),$ and $H_i=V(x-a_iy)$ for $i=1,\ldots,k$.  By Lemma~\ref{lem:SkDifferential}, the complex $\cS^\bullet$ is
\[
0\rightarrow S^2 \xrightarrow{\delta^0_S} S^{k+2} \xrightarrow{\delta^1_S} S^k \rightarrow 0,
\]
where
\[
\delta^0=
\begin{bmatrix}
1 & 0 \\
0 & 1 \\
1 & -a_1\\
\vdots & \vdots\\
1 & -a_k
\end{bmatrix}
\qquad \mbox{and} \qquad
\delta^1=
\begin{bmatrix}
-1 & a_1 & 1 & 0 &\cdots & 0\\
-1 & a_2 & 0 & 1 &\cdots & 0\\
\vdots & \vdots & \vdots & \vdots & & \vdots\\
-1 & a_k & 0 & 0 & \cdots & 1
\end{bmatrix}.
\]
Notice that $S_2(\A)\cong S^k$.  Write $m_x,m_y$ for $\m(H_x),\m(H_y)$, respectively, and $m_i$ for $\m(H_i)$, $i=1,\ldots,k$.  By Lemma~\ref{lem:ComputingJk}, $J_2(\A,\m)=\cJ^2(\A,\m)$ is generated by the columns of the matrix
\[
M=
\begin{bmatrix}
-x^{m_x} & a_1y^{m_y} & (x-a_1y)^{m_1} & 0 &\cdots &0\\
-x^{m_x} & a_2y^{m_y} & 0 & (x-a_2y)^{m_2} & \cdots & 0\\
\vdots & \vdots & \vdots & \vdots & & \vdots\\
-x^{m_x} & a_ky^{m_y} & 0 & 0 & \cdots & (x-a_ky)^{m_k}
\end{bmatrix},
\]
so $D^2(\A,\m)\cong \mbox{coker}(M)$.  Notice that $M$ is a matrix for $\delta^1_J$ with the natural choice of basis for $\bigoplus_{H\in L_1} J(H)\cong \bigoplus_{H\in L_1} S(-\m(H))$.  Hence, by Remark~\ref{rem:DJiso}, we may identify $D(\A,\m)$ with $H^1(\cJ^\bullet,\m)$, which is exactly the syzygies on the columns of $M$ (it is also straightforward to see this from the definition of $D(\A,\m)$).  In particular, if $k=1$ so $\A$ is the $A_2$ braid arrangement, then $D(A_2,\m)$ may be identified with the syzygies on the forms $x^{m_x},y^{m_y},$ and $(x-a_1y)^{m_1}$.  This provides an alternative way to identify the generators and exponents of $(A_2,\m)$, which were originally found in~\cite{Wakamiko} (see~\cite{FatPoints},\cite[Example~3.6,Lemma~4.5]{A3MultiBraid} for more details).
\end{exm}

For an arrangement defined by the vanishing of forms $\alpha_1,\ldots,\alpha_n$, we will write $H_i$ for $V(\alpha_i)$ and denote the flat $H_{i_1}\cap\cdots\cap H_{i_k}$ by the list of indices $i_1\cdots i_k$.  Furthermore, we will denote by $\lt$ the set of rank two flats which are the intersection of at least three hyperplanes.

\begin{exm}[$X_3$ arrangement]\label{ex:X3} Consider the arrangement $\A_t$ defined by the vanishing of the six linear forms
\[
\begin{array}{ll}
\alpha_1=x & \alpha_4=x-t y\\
\alpha_2=y & \alpha_5=x+z\\
\alpha_3=z & \alpha_6=y+z.
\end{array}
\]
The intersection lattice of $\A_t$ is constant as long as $t\neq 0,1$, with six double points and three triple points $\lt=\{124,135,236\}$.  Lemma~\ref{lem:SkDifferential} yields
\[
\cS^\bullet=0\rightarrow S^3\xrightarrow{\delta^0_S} S^6\xrightarrow{\delta^1_S} S^3 \rightarrow 0,
\]
where
\[
\delta^0_S=\bordermatrix{ & x & y & z\cr 
	1 & 1 & 0 & 0 \cr
	2 & 0 & 1 & 0 \cr
	3 & 0 & 0 & 1 \cr
	4 & 1 & -t & 0 \cr
	5 & 1 & 0 & 1 \cr
	6 & 0 & 1 & 1}
\qquad
\delta^1_S=\bordermatrix{ & 1 & 2 & 3 & 4 & 5 & 6 \cr
	124 & 1 & -t & 0 & -1 & 0 & 0 \cr
	135 & 1 & 0 & 1 & 0 & -1 & 0 \cr
	236 & 0 & 1 & 1 & 0 & 0 & -1
}.
\]
This complex is always exact, hence $\A_t$ is always formal for $t\neq 0,1$ by Corollary~\ref{cor:HomologicalCharFormality}.  By Remark~\ref{rem:DJiso}, $H^i(\cD^\bullet)\cong H^i(\cJ^\bullet)$.  By Theorem~\ref{thm:Free}, we may check freeness of $D(\A_t,\m)$ by determining vanishing of $H^1(\cJ^\bullet)$.

Now we consider the complex $\cJ^\bullet$.  Write $J(i)$ for $J_1((\A_t)_{H_i},m_i)=\langle \alpha_i^{m_i}\rangle$.  If $ijk\in\lt$, we write $J(ijk)$ for the ideal $J(i)+J(j)+J(k)$, where $ijk\in\lt$.  Then, by Lemma~\ref{lem:ComputingJk}, $J_2(124,\m)=J(1)-tJ(3)-J(4)=J(1)+J(3)+J(4)=J(134)$.  The same holds for any triple point, so $J_2(ijk,\m)=J(ijk)$ for every $ijk\in\lt$.  So $\cJ^2=\oplus_{ijk\in\lt} J(ijk)$ and
\[
\cJ^\bullet= 0\rightarrow \bigoplus\limits_{i=1}^6 J(i)\xrightarrow{\delta^1_J} \bigoplus\limits_{ijk\in\lt} J(ijk),
\]
where $\delta^1_J$ is the restriction of $\delta^1_S$.  A presentation for $H^2(\cJ^\bullet)$ is worked out in~\cite{X3} and is used to prove that $(\A_t,\m)$ is free if and only if the defining equation has the form $\Q(\A,\m)=x^ny^nz^n(x-ty)(x+z)(y+z)$, where $t^n=1$.  We generalize this result in Theorem~\ref{thm:FreeMultNonFreeTF2}.
\end{exm}

\subsection{Graphic arrangements}  
Let $G$ be a simple graph (no loops or multiple edges) on $\ell$ vertices $\{v_1,\ldots,v_\ell\}$ with edge set $E(G)$, $S=\kk[x_1,\ldots,x_\ell]$ (with $x_i$ corresponding to $v_i$), and set $H_{ij}=V(x_i-x_j)$.  The \textit{graphic arrangement} associated to $G$ is the arrangement $\A_G=\cup_{\{v_i,v_j\}\in E(G)} H_{ij}$; $\A_G$ is a sub-arrangement of the $A_{\ell-1}$.  A multiplicity $\m$ on $\A_G$ is determined by the values $m_{ij}=\m(H_{ij})$ corresponding to edges $\{v_i,v_j\}\in E(G)$.

Recall that the \textit{clique complex} (or \textit{flag complex}) of a graph $G$ is the simplicial complex $\Delta=\Delta(G)$ with an $i$-simplex for every complete graph on $(i-1)$ vertices.

\begin{lem}\label{lem:simpcochain}
The chain complex $\cS^\bullet(\A_G)$ may be identified with the simplicial co-chain complex of $\Delta(G)$ with coefficients in $S$.  Hence $\A_G$ is $k$-formal if and only if $H^i(\Delta(G); S)=0$ for $1\le i\le k-1$.
\end{lem}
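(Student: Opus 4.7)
The plan is to identify the cochain complex $\cS^\bullet(\A_G)$ with the simplicial cochain complex $C^\bullet(\Delta(G); S)$ directly; the statement about $k$-formality will then follow immediately from Remark~\ref{rem:FormalS}. Since $\cS^\bullet \cong \cR_\bullet^* \otimes_\kk S$ (as noted in \S\ref{sec:Formal}), it is equivalent and cleaner to show $\cR_\bullet(\A_G) \cong C_\bullet(\Delta(G); \kk)$ as chain complexes of $\kk$-vector spaces, then dualize and tensor with $S$.

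First I would analyze the rank $k$ flats of $\A_G$. A flat $X \in L_k(\A_G)$ is described by a partition $\{B_1,\ldots,B_{\ell-k}\}$ of $V(G)$ with $\sum_i(|B_i|-1) = k$ such that each induced subgraph $G[B_i]$ is connected; the closed subarrangement $(\A_G)_X$ is then the product $\prod_i \A_{G[B_i]}$. By Proposition~\ref{prop:Duality}, analyzing $\cS^k(\A_G)$ reduces to computing $R_k((\A_G)_X)$ for each such flat.

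Next I would show that $R_k((\A_G)_X)$ vanishes unless $X$ has a unique nontrivial block $B$ of size $k+1$ with $G[B] = K_{k+1}$, in which case $(\A_G)_X$ is the braid arrangement $A_k$ and $R_k(A_k) \cong \kk$. The vanishing for nontrivial products is a K\"unneth-type argument: a top-rank element of $R_k$ on a nontrivial product must assemble from top-rank pieces of each factor, but the recursive definition of $R_k$ forces single-factor support that cannot realize the top rank of the entire product. For a single block $B$ with $G[B]$ connected but not complete, one argues inductively using the lemma itself: $\cR_\bullet(\A_{G[B]}) \cong C_\bullet(\Delta(G[B]); \kk)$, and $R_{|B|-1}$ vanishes because $\Delta(G[B])$ is a proper subcomplex of $\Delta(K_{|B|})$ missing its top simplex. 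The computation $R_k(A_k) \cong \kk$ is a small direct calculation, equivalent to the fact that $\Delta(K_{k+1}) = \Delta^k$ has trivial reduced homology away from dimension $k$. This produces a bijection between contributing rank $k$ flats and $k$-simplices of $\Delta(G)$, yielding $\cR_k(\A_G) \cong C_k(\Delta(G); \kk)$ as $\kk$-vector spaces.

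Finally I would verify that the differential $\delta_k : \cR_k \to \cR_{k-1}$ matches the simplicial boundary. The differential is a sum of natural inclusions $R_k((\A_G)_X) \hookrightarrow R_{k-1}((\A_G)_Y)$ for covering pairs $Y < X$; when $X$ corresponds to a $(k+1)$-clique, the covering flats of rank $k-1$ are precisely the $k$ subcliques obtained by deleting one vertex of $X$, and a sign convention consistent with the recipe of Lemma~\ref{lem:SkDifferential} realizes the inclusion as the standard alternating-sum face map. Dualizing and tensoring with $S$ then gives $\cS^\bullet(\A_G) \cong C^\bullet(\Delta(G); S)$, and Remark~\ref{rem:FormalS} completes the second assertion. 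The main obstacle will be the vanishing claim for non-clique flats: the product case in particular requires a careful K\"unneth-style analysis of $R_k$, and separating the inductive argument from the clique computation needs to be done cleanly to avoid circularity.
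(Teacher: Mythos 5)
Your outline is correct, but it takes a genuinely longer route than the paper, whose proof of this lemma is essentially a citation: it invokes \cite[Lemma~3.1]{StefanFormal}, which states precisely that $\cR_\bullet(\A_G)$ may be identified with the simplicial chain complex of $\Delta(G)$ with coefficients in $\kk$, and then applies the isomorphism $\cS^\bullet\cong(\cR_\bullet)^*\otimes_\kk S$ coming from Proposition~\ref{prop:Duality} and Lemma~\ref{lem:DegZeroGen}; the $k$-formality statement then follows exactly as you say from Remark~\ref{rem:FormalS} (equivalently Lemma~\ref{lem:HomologicalCharFormality}).  What you propose is in effect a reconstruction of Toh\v{a}neanu's proof of the cited lemma: flats of $\A_G$ are connected partitions, the closed subarrangement of a flat is the product of the graphic arrangements on its blocks, $R_k$ of a rank-$k$ flat is $\kk$ when the flat is a $(k+1)$-clique and $0$ otherwise, and the differential becomes the simplicial boundary.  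This buys self-containedness at the cost of the bookkeeping you yourself flag.  The one step you assert rather than argue is the vanishing for flats with more than one nontrivial block; here no K\"unneth-type machinery is needed, because everything entering the recursive definition of $R_k(\A_{G_X})$, namely $\cR_{k-1}(\A_{G_X})$ and $R_{k-1}(\A_{G_X})$, is built from proper flats of $\A_{G_X}$, which correspond to subgraphs with strictly fewer internal edges.  So a single induction on the number of edges of the graph treats the product case, the connected non-complete case, and the computation $R_k(\A_{K_{k+1}})\cong\kk$ uniformly, and also dissolves the circularity concern you raise about invoking the lemma for $G[B]$.  If you prefer not to redo this analysis, the efficient proof is the paper's: cite \cite{StefanFormal} and dualize.
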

\begin{proof}
By~\cite[Lemma~3.1]{StefanFormal}, $\cR_\bullet(\A_G)$ may be identified with the simplicial chain complex of $\Delta(G)$ with coefficients in $\kk$.  Now use the isomorphism $\cS^\bullet\cong(\cR_\bullet)^*\otimes_\kk S$.
\end{proof}

\begin{remark}\label{rem:GraphicTotallyFormal}
Using Lemma~\ref{lem:simpcochain} we may easily see how the notions of $k$-formal for various $k$ are distinct; this was part of the intent of~\cite{StefanFormal}.  This lemma also makes it clear that the condition that $\A_G$ is $k$-formal for $2\le k\le r-1$ is distinct from the condition of being totally formal.  A graphic arrangement $\A_G$ is $k$-formal for $2\le k\le r-1$ if and only if its clique complex $\Delta(G)$ is contractible.  On the other hand, $\A_G$ is totally formal if and only if $G$ is chordal; a much stronger condition which coincides with both freeness and supersolvability of $\A_G$~\cite{StanleySupersolvable}.
\end{remark}

If $\sigma\in\Delta(G)_k$ is a complete graph on the $(k+1)$ vertices $\{v_{i_0},\ldots,v_{i_k}\}$ (where $k\ge 1$), then write $J(\sigma)$ for the ideal generated by the forms $\{(x_{i_s}-x_{i_t})^{m_{i_si_t}} : 0\le s<t\le k\}$.  If $\sigma=\{v_i\}$ is a single vertex, then we take $J(\sigma)=0$. 

%

\begin{prop}\label{prop:GraphicDdescription}
If $G$ is a simple graph, then $\cD^\bullet(\A_G,\m)$ has modules 
\[
\cD^i\cong \bigoplus_{\sigma\in\Delta(G)_i} S/J(\sigma)
\]
and differentials $\delta^i$ induced from the simplicial co-chain complex with coefficients in $S$, which may be identified with $\cS^\bullet(\A_G)$.
\end{prop}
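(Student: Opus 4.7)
The plan is to realize $\cD^\bullet(\A_G, \m)$ as a quotient of the simplicial cochain complex via the short exact sequence $0 \to \cJ^\bullet \to \cS^\bullet \to \cD^\bullet \to 0$ from Definition~\ref{defn:FormalityComplex}. Under the identification of Lemma~\ref{lem:simpcochain}, $\cS^i \cong C^i(\Delta(G); S) = \bigoplus_{\sigma \in \Delta(G)_i} S$ and the differentials $\delta^i_S$ become the standard simplicial coboundary maps. It therefore suffices to show that under this identification $\cJ^i = \bigoplus_{\sigma \in \Delta(G)_i} J(\sigma)$; taking the quotient will then produce $\cD^i \cong \bigoplus_\sigma S/J(\sigma)$ with differentials induced from the coboundary, as claimed.

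I would prove the identification of $\cJ^i$ componentwise by induction on $i$. The base case $i = 1$ is immediate: the hyperplanes of $\A_G$ correspond to the edges of $G$ (that is, the $1$-simplices of $\Delta(G)$), and for $\sigma = \{v_s, v_t\}$ one has $J_1((\A_G)_{H_{st}}, m_{st}) = \langle (x_s - x_t)^{m_{st}} \rangle = J(\sigma)$. For the inductive step, Lemma~\ref{lem:ComputingJk} gives that the $\sigma$-component of $\cJ^i$ is the image under $\delta^{i-1}_S$ of the $(i-1)$-face components of $\cJ^{i-1}$. Under the simplicial identification, $\delta^{i-1}_S$ is the simplicial coboundary map, so the $\sigma$-component of $\cJ^i$ equals $\sum_\tau J(\tau)$, where $\tau$ runs over the $(i-1)$-faces of $\sigma$ and by induction the $\tau$-component of $\cJ^{i-1}$ is $J(\tau)$.

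A short combinatorial check closes the induction. For $\sigma = \{v_{i_0}, \ldots, v_{i_i}\}$ with $i \ge 2$, each generator $(x_{i_s} - x_{i_t})^{m_{i_s i_t}}$ of $J(\sigma)$ lies in $J(\sigma \setminus \{v_{i_j}\})$ for any $j \notin \{s, t\}$ (such a $j$ exists since $|\sigma| \ge 3$), and conversely $J(\tau) \subseteq J(\sigma)$ for every face $\tau$ of $\sigma$. Hence $\sum_\tau J(\tau) = J(\sigma)$, giving the desired formula for $\cJ^i$.

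The main subtlety will be reconciling the simplicial description of $\cS^i$, a direct sum indexed by $i$-simplices of $\Delta(G)$, with its definition as a sum over all rank-$i$ flats of $\A_G$: the latter are parametrized by partitions of $V(G)$ into connected blocks rather than single $(i+1)$-cliques. The identification of Lemma~\ref{lem:simpcochain} implicitly forces the non-clique flats to contribute trivially to $\cS^i$, so the induction above matches up componentwise. Sign choices in the simplicial coboundary do not affect the identification of $\cJ^i$, because each component is an ideal of $S$.
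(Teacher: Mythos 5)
Your proposal is correct and follows essentially the same route as the paper: the paper's proof simply invokes the identification of $\cS^\bullet(\A_G)$ with the simplicial co-chain complex from Lemma~\ref{lem:simpcochain} together with the inductive description of the ideals from Lemma~\ref{lem:ComputingJk}, which is exactly what you carry out (with the extra, correct, detail that $\sum_{\tau\subset\sigma}J(\tau)=J(\sigma)$ for $i\ge 2$ and that non-clique flats contribute trivially).
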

\begin{proof}
Use the identification of the differentials $\delta^i$ in Lemma~\ref{lem:simpcochain} as the simplicial co-chain differential for $\Delta(G)$ and the construction of $J_k((\A_G)_X,\m_X)$ from Lemma~\ref{lem:ComputingJk}.
\end{proof}

\begin{remark}
The chain complex in Proposition~\ref{prop:GraphicDdescription} was introduced in~\cite{GSplinesGraphicArr} by analogy with a natural class of chain complexes in the context of multivariate spline theory~\cite{Homology,LCoho}.  Applying Theorem~\ref{thm:Free} yields the homological characterization of freeness obtained in~\cite[Corollary~5.6]{GSplinesGraphicArr}.
\end{remark}

\begin{remark}
The first non-trivial classification of free multiplicities on a graphic arrangement admitting both free and non-free multiplicities was completed in~\cite{AbeDeletedA3}.  Building on work of Abe, Nuida, and Numata~\cite{AbeSignedEliminable}, the classification of free multiplicities on the $A_3$ braid arrangement has been completed in~\cite{A3MultiBraid}.  The key is a detailed analysis of $H^2(\cD^\bullet(A_3,\m))$, where $\cD^\bullet$ is the complex described in Corollary~\ref{prop:GraphicDdescription}.
\end{remark}

\section{$TF_2$ arrangements}\label{sec:TF2}
In this section we introduce a subset of the totally formal arrangements which we shall call $TF_k$ arrangements.  These are totally formal arrangements which additionally satisfy that $\cS^i(\A)=0$ for $i>k$.  For instance, every totally formal arrangement is $TF_k$ for $k\ge r(\A)$.  A graphic arrangement $\A_G$ is $TF_k$ if and only if $G$ is chordal (see Remark~\ref{rem:GraphicTotallyFormal}) and $\dim(\Delta(G))\le k$.  By Theorem~\ref{thm:Free} and Remark~\ref{rem:DJiso}, freeness of $TF_k$ arrangements is determined by the vanishing of $H^i(\cJ^\bullet)$ for $2\le i\le k$.  In the rest of this section we will assume that $\A$ is a $TF_2$ arrangement of rank at least three.


\subsection{Free $TF_2$ arrangements}
Recall that an arrangement $\A$ is \textit{supersolvable} if there is a filtration $\A_1\subset\cdots\subset\A_r=\A$ satisfying the following rank property (RP) and intersection property (IP):
\begin{itemize}
\item[(RP)] $r(\A_i)=i$ for $i=1,\ldots,r(\A)$.
\item[(IP)] For any $H,H'\in\A_i$ there exists some $H''\in\A_{i-1}$ so that $H\cap H'\subset H''$.
\end{itemize}

\begin{prop}\label{prop:FreeTF2Arrangements}
Let $\A$ be an irreducible $TF_2$ arrangement of rank $r=r(\A)$.  Then
\begin{itemize}
\item $|\A|=r-\#\lt+\sum_{X\in\lt}(|\A_X|-1)$
\item $|\A|\le 1+\sum_{X\in\lt}(|\A_X|-1)$
\item $\#\lt\ge r-1$
\end{itemize}
Furthermore, the following are equivalent.
\begin{enumerate}
\item $\A$ is free
\item $|\A|=1+\sum_{X\in\lt} (|\A_X|-1)$
\item $\#\lt=r-1$
\item $\A$ is supersolvable
\end{enumerate}
In particular, if $\A$ is $TF_2$, its freeness may be determined from $L(\A)$.
\end{prop}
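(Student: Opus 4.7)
My plan is to use exactness of $\cS^\bullet(\A)$ to deliver the three bulleted identities and then close the four-way equivalence by combining Theorem~\ref{thm:Free}, a Hilbert-series computation of $D(\A)$, the classical Jambu--Terao theorem, and a tree induction over the bipartite incidence graph of hyperplanes and triple points.

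First I would make $\cS^\bullet(\A)$ explicit. Irreducibility forces $\A$ to be essential, so $\cS^0\cong S^r$; Remark~\ref{rem:LowDescription} gives $\cS^1\cong S^{|\A|}$; and Example~\ref{ex:PointsP1} shows that $S_2(\A_X)$ vanishes at a double point and equals $S^{|\A_X|-2}$ when $X\in\lt$. The $TF_2$ hypothesis kills $\cS^k$ for $k>2$, and total formality plus essentiality (Remark~\ref{rem:FormalS}, Corollary~\ref{cor:HomologicalCharFormality}) makes the three-term complex $0\to S^r\to S^{|\A|}\to \bigoplus_{X\in\lt}S^{|\A_X|-2}\to 0$ exact. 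Taking an alternating sum of ranks yields the first bullet; the second and third bullets are both reformulations of $\#\lt\ge r-1$ given the first. To prove this inequality, form the bipartite incidence graph $B$ on $\A\sqcup\lt$ with an edge $(H,X)$ whenever $H\in\A_X$: if some $H_0\in\A$ lay in no triple point, formality would force every relation in the relation space $F(\A)$ to have zero coefficient on $\alpha_{H_0}$, splitting it off as a direct summand of $V^*$ and contradicting irreducibility; an analogous spanning argument rules out $B$ having more than one component. Connectivity of $B$ on $|\A|+\#\lt$ vertices with $\sum|\A_X|$ edges forces $\sum|\A_X|\ge |\A|+\#\lt-1$, which rearranges via bullet~1 to $\#\lt\ge r-1$.

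For the equivalences, (2)$\Leftrightarrow$(3) is immediate from bullet~1 and (4)$\Rightarrow$(1) is the classical Jambu--Terao theorem. For (1)$\Rightarrow$(3), Theorem~\ref{thm:Free} together with Remark~\ref{rem:DJiso} reduces freeness of $(\A,1)$ to the vanishing of $H^2(\cJ^\bullet(\A,1))$, yielding the short exact sequence $0\to D(\A)\to\cJ^1\to\cJ^2\to 0$. Using the rank-two computation $D(\A_X)\cong S^{r-1}\oplus S(-(|\A_X|-1))$ for each $X\in\lt$, a Hilbert-series comparison gives
\[
\sum_{i=1}^r t^{d_i}\;=\;(r-\#\lt)\,t+\sum_{X\in\lt}t^{|\A_X|-1}.
\]
For an irreducible essential arrangement, the Euler derivation is the only degree-one element of $D(\A)$ up to scalar (any other degree-one derivation would force distinct eigenvalues on a spanning set of $\alpha_H$'s, producing a product decomposition), so the coefficient of $t$ on the left is exactly $1$, forcing $r-\#\lt=1$. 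Finally, for (3)$\Rightarrow$(4), the equality $\#\lt=r-1$ combined with connectivity of $B$ makes $B$ a tree on $|\A|+\#\lt$ vertices; selecting a leaf triple point $X_0$ whose hyperplanes except one appear in no other triple point, peeling off the pendant hyperplanes produces a lower-rank $TF_2$ arrangement satisfying (3), and an induction on $r$ assembles a supersolvable filtration with the pendant layer on top.

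The main obstacle is probably (3)$\Rightarrow$(4): one must verify that the peeling procedure preserves both irreducibility and the $TF_2$ condition of the reduced arrangement, and that the pendant layer integrates cleanly as the top of a modular filtration.
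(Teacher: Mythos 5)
Most of your plan is sound, and in places it genuinely diverges from the paper: the first bullet (Euler characteristic of the exact complex $\cS^\bullet$) matches the paper; your proof of $\#\lt\ge r-1$ via connectivity of the hyperplane/triple-point incidence graph (using formality plus irreducibility to rule out a hyperplane missing all triple points or a disconnection) is a valid alternative to the paper's argument, which instead computes $\dim H^2(\cJ^\bullet)_1=\sum_{X\in\lt}(|\A_X|-1)-|\A|+1\ge 0$ in degree one using that $\ker(\delta^1_J)_1$ is spanned by the Euler derivation; and your $(1)\Rightarrow(3)$ via a Hilbert-series count of exponents is a workable substitute for the paper's observation that $H^2(\cJ^\bullet)$ is generated in degree one. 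One correction there: $D(\A_X)$ for $X\in\lt$ is not $S^{r-1}\oplus S(-(|\A_X|-1))$; its exponents are $(0^{r-2},1,|\A_X|-1)$, and what actually enters the computation is the image of $D(\A_X)$ under $\psi_X$ (isomorphic to $S(-1)\oplus S(-(|\A_X|-1))$), not $D(\A_X)$ itself --- the naive four-term sequence with the full modules $D(\A_X)$ is not left-exact because each contains a rank-$(r-2)$ submodule killed by $\psi_X$. Your displayed identity $\sum_i t^{d_i}=(r-\#\lt)t+\sum_{X\in\lt}t^{|\A_X|-1}$ is nevertheless correct once this is fixed, and the ``unique degree-one derivation'' argument for irreducible essential arrangements then closes $(1)\Rightarrow(3)$.

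The genuine gap is exactly where you suspected: $(3)\Rightarrow(4)$. Your peeling induction needs the deleted arrangement $\A'=\A\setminus(\A_{X_0}\setminus\{H_0\})$ to again satisfy the hypotheses of the proposition, i.e.\ to be (after essentializing) irreducible and $TF_2$ with $\#\lt(\A')=r(\A')-1$. Two-formality of $\A'$ and the count can be salvaged, but total formality and the vanishing of the higher terms $\cS^{i}(\A'_Y)$ for all closed subarrangements $\A'_Y$ is a statement about deletions, and formality-type conditions are notoriously unstable under deletion; nothing in the paper (or in your sketch) supplies this, so the induction hypothesis simply cannot be invoked, and the argument as written does not close. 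You also need, and do not prove, that the rank drops by exactly one and that essentiality/irreducibility survives (these are fixable by a span count along the tree, but they are not free). The paper avoids all of this by never deleting: it builds the filtration bottom-up, setting $\A_2=\A_{X_1}$ and $\A_{i+1}=\A_i\cup\A_{X_i}$ where $X_i\in\lt$ meets $\A_i$, using only $2$-formality of $\A$ itself to produce the connecting chains between triple points, noting that (IP) holds trivially at each step (for $H\in\A_i$ take $H''=H$; for two new hyperplanes use the shared one), and forcing $r(\A_i)=i$ from the length count $\#\lt=r-1$ since each step raises the rank by at most one. If you replace the top-down peeling by this bottom-up construction --- or reformulate the induction so that its hypothesis is only connectivity of the incidence structure and the count, not the full $TF_2$ package --- your proof goes through.
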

\begin{proof}
The first three bullet points are computed from the Euler characteristic of $\cS^\bullet(\A)$ and $\cJ^\bullet(\A)_1$ as follows.  Since $\A$ is $TF_2$, $\cS^\bullet(\A)$ is a short exact sequence of the form:
\[
0\rightarrow S^\ell=S^r\rightarrow S^{|\A|} \rightarrow \bigoplus_{X\in\lt} S^{|\A_X|-2}\rightarrow 0,
\]
so the alternating sum of the ranks yields $|\A|=r+\sum_{X\in\lt}(|\A_X|-2)=r-\#\lt+\sum_{X\in\lt}(|\A_X|-1)$.  For the second bullet point, $\cJ^\bullet(\A)$ has the form
\[
0\rightarrow \bigoplus\limits_{H\in\A} J(H) \xrightarrow{\delta^1_J} \bigoplus\limits_{X\in\lt} J_2(\A_X) \rightarrow 0.
\]
Since $\ker(\delta^1_J)=D(\A)$ and we assumed $\A$ is irreducible, $\ker(\delta^1_J)_1$ is one dimensional, spanned by the Euler derivation.  We may easily compute $\dim J_2(\A_X)_1=|\A_X|-1$ for $X\in\lt$, hence
\[
\dim H^2(\cJ^\bullet)_1=\sum\limits_{X\in\lt}(|\A_X|-1)-|\A|+1
\]
by computing the Euler characteristic of $\cJ^\bullet_1$.  This must be non-negative, yielding $|\A|\le 1+\sum\limits_{X\in\lt}(|\A_X|-1)$.  The third bullet point follows from putting the first two bullet points together.

Now we prove the equivalent conditions for freeness.  The implication $(4)\implies(1)$ is a well known fact.  Since supersolvability is determined from $L(\A)$, the final statement is immediate from $(4)$.  We first prove $(1)\iff (2)$.  From Theorem~\ref{thm:Free} and Remark~\ref{rem:DJiso}, $\A$ is free if and only if $H^2(\cJ^\bullet)=0$.  From the explicit description in Example~\ref{ex:PointsP1}, we see that $J_2(\A_X)$ is generated in degree one for every $X\in\lt$, as is $J(H)\cong\langle \alpha_H\rangle$ for every $H\in\A$.  So $H^2(\cJ^\bullet)$ must also be generated in degree one since it is a quotient of $\sum_{X\in\lt} J_2(\A_X)$.  From our above computation,
\[
\dim H^2(\cJ^\bullet)_1=\sum\limits_{X\in\lt}(|\A_X|-1)-|\A|+1,
\]
hence $\A$ is free if and only if this expression vanishes, i.e. $|\A|=1+\sum\limits_{X\in\lt}(|\A_X|-1)$.  $(3)$ follows immediately from $(2)$ using the expression $|\A|=r-\#\lt+\sum_{X\in\lt}(|\A_X|-1)$ already proved.  Finally, we show $(3)\implies (4)$.  First, for any $X,X'\in\lt$, we prove there is a sequence $X=X_1,H_1,X_2,\ldots,H_{k-1},X_k=X'$ satisfying
	\begin{enumerate}
		\item $H_i\in\A$ for $i=1,\ldots,k-1$
		\item $X_i\in\lt$ for $i=1,\ldots,k$.
		\item $H_i<X_i$ and $H_{i+1}<X_{i+1}$ in $L(\A)$ for $i=1,\ldots,k-1$.
	\end{enumerate}
	To show this, let $H_1,H_2\in\A_X$ and $H'_1,H'_2\in\A_{X'}$ with corresponding linear forms $\alpha_1,\alpha_2,\alpha'_1,\alpha'_2$.  Complete $\alpha_1,\alpha_2,\alpha'_1$ to a basis $B$ of $V^*$ using defining forms of $\A$ (this is possible because $\A$ is essential).  Adding $\alpha'_2$ to $B$, we see there is a relation of length $r+1$ among the forms $B\cup\{\alpha'_2\}$.  Since $\A$ is formal, this relation can be expressed as a linear combination of relations of length three.  We then read off the sequence $X=X_1,H_1,\ldots,X_k=X'$ from this linear combination of relations of length three.
	
	Now we construct a filtration $\mathcal{F}=\mathcal{F}(\A)=\A_1\subseteq\A_2\subseteq\cdots\subseteq\A_{r}=\A$ of $\A$.  Let $\A_1=H$ for any $H\in\A$, and $\A_2=\A_{X_1}$ for some $X_1\in\lt$ so that $H\in\A_{X_1}$ (by Corollary~\ref{cor:genericHyperplane}, every $H\in\A$ passes through some $X\in\lt$).  Build $\A_{i+1}$ from $\A_i$ for $2\le i\le r$ inductively as follows.  By our above claim, there exists $X_i\in\lt$ so that $\A_i\cap\A_{X_i}\neq\emptyset$.  Then set $\A_{i+1}=\A_i\cup\A_{X_i}$.  This process finishes with $\A_{(r-1)+1}=\A_r$, when we have exhausted $\lt$.  Notice that $\mathcal{F}$ satisfies the intersection property (IP) by construction.  Moreover, $r(\A_i)\le r(\A_{i-1})+1$, hence since the filtration has length $r$ with $\A_r=\A$, we must have $r(\A_i)=i$.  Hence $\mathcal{F}(\A)$ is a supersolvable filtration.
\end{proof}

\subsection{Presentation for $H^2(\cJ^\bullet)$}
Assuming $\A$ is a $TF_2$ arrangement, we now obtain an explicit presentation for $H^2(\cJ^\bullet(\A,\m))$.  Consider the diagram in Figure~\ref{fig:H2Pres},  where the chain complex $\cJ^\bullet$ appears on the right hand side ($\cJ^\bullet$ has only two terms since $\A$ is $TF_2$).  For book-keeping purposes we use the formal symbols $[H]$ and $[X,H]$ (or $[\alpha_H],[X,\alpha_H]$), of degree $\m(H)$, to denote the generators $\alpha_H^{\m(H)}$ of the summands $J(H)=\langle \alpha_H^{\m(H)}\rangle$ which appear in $\bigoplus_{H\in\A} J(H)$ and $\bigoplus_{X\in\lt}\bigoplus_{H<X} J(H)$, respectively.  With this notation, the map $\psi_X:D(\A_X,\m_X)\rightarrow\bigoplus_{H<X}J(X)$ in Figure~\ref{fig:H2Pres} is the map $\psi_X(\theta)=\sum_H \dfrac{\theta(\alpha_H)}{\alpha_H^{\m(H)}}[X,H]$ and $\iota:\bigoplus J(H_i)\rightarrow \bigoplus_{X\in\lt}\bigoplus_{X<H_i} J(H_i)$ is the natural inclusion defined by $\iota([H])=\sum_{X\in\lt} \sum_{X<H} [X,H]$ and extended linearly.  The main thing to check for commutativity is that $(\sum (\delta^1_J)_X)\circ\iota=\delta^1_J$, which follows from the definition.

\begin{figure}
	\centering
	\begin{tikzcd}
		& 	\bigoplus\limits_{H\in\A} J(H) \ar{r}{\cong} \ar{d}{\iota} & \bigoplus\limits_{H\in\A} J(H) \ar{d}{\delta^1_J}\\
		\bigoplus\limits_{X\in\lt} D(\A_X,\m_X) \ar{r}{\sum \psi_X} & \bigoplus\limits_{X\in\lt}\left[ \bigoplus\limits_{H<X} J(H)\right] \ar{r}{\sum (\delta^1_J)_X} & \bigoplus\limits_{X\in\lt} J_2(\A_X,\m_X)
	\end{tikzcd}
	\caption{Diagram for Proposition~\ref{prop:H2Pres}}\label{fig:H2Pres}
\end{figure}

\begin{prop}\label{prop:H2Pres}
Suppose $\A$ is an irreducible $TF_2$ arrangement of rank at least three.  Then
\[
H^2(\cJ^\bullet)\cong \mbox{coker}\left(\bigoplus_{X\in\lt} D(\A_X,\m_X)\xrightarrow{\sum \overline{\psi_X}} \mbox{coker}(\iota)\cong S^\kappa\right),
\]
where $\kappa=(\sum_{X\in\lt} |\A_X|)-|\A|$.  Moreover,
\begin{enumerate}
\item $(\A,\m)$ is free if and only if $\sum \overline{\psi_X}$ is surjective.
\item $\kappa>0$, i.e. $|\A|<\sum_{X\in\lt} |\A_X|$.
\item If $|\A|<\sum_{X\in\lt} (|\A_X|-1)$ or equivalently $r<\#\lt$ then $\A$ is totally non-free.  Furthermore in this case every $\A'\in\mathcal{M}(L(\A))$ is totally non-free.
\end{enumerate}
\end{prop}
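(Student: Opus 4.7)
The first task is to establish the isomorphism $H^2(\cJ^\bullet)\cong\mbox{coker}(\sum\overline{\psi_X})$ by a diagram chase on Figure~\ref{fig:H2Pres}. Commutativity is immediate from the defining formulas for $\iota$, $(\delta^1_J)_X$, and the $\psi_X$. The essential local input is that, for each $X\in\lt$, the bottom row
$D(\A_X,\m_X)\xrightarrow{\psi_X}\bigoplus_{H<X}J(H)\xrightarrow{(\delta^1_J)_X}J_2(\A_X,\m_X)\to 0$
is exact: Lemma~\ref{lem:ComputingJk} gives surjectivity on the right, and exactness in the middle is essentially Example~\ref{ex:PointsP1} applied to the rank-two arrangement $\A_X$ (a tuple $(f_H)_{H<X}$ lies in $\ker((\delta^1_J)_X)$ precisely when it equals $(\theta(\alpha_H))_{H<X}$ for some $\theta\in D(\A_X,\m_X)$, i.e., lies in the image of $\psi_X$). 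A snake-lemma style chase then identifies $\mbox{coker}(\delta^1_J)$ with $\mbox{coker}(\sum\overline{\psi_X})$. To identify $\mbox{coker}(\iota)$, I use that $\A$ is formal (since it is $TF_2$) and irreducible, so by Corollary~\ref{cor:genericHyperplane} it has no generic hyperplanes; hence $e_H:=\#\{X\in\lt:H<X\}\ge 1$ for every $H\in\A$. Thus $\iota$ is injective, and breaking it up by hyperplane gives $\mbox{coker}(\iota)\cong\bigoplus_H J(H)^{e_H-1}\cong\bigoplus_H S(-m_H)^{e_H-1}$, a free $S$-module of rank $\sum_H(e_H-1)=\sum_{X\in\lt}|\A_X|-|\A|=\kappa$.

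For (1), Theorem~\ref{thm:Free} combined with the total formality of $\A$ yields (via the long exact sequence of Remark~\ref{rem:LES}, collapsed by Remark~\ref{rem:DJiso} into shifted isomorphisms $H^k(\cD^\bullet)\cong H^{k+1}(\cJ^\bullet)$ for $k\ge 0$) that freeness is equivalent to $H^2(\cJ^\bullet)=\mbox{coker}(\sum\overline{\psi_X})=0$, since $\cJ^k=0$ for $k>2$. For (2), the first bullet of Proposition~\ref{prop:FreeTF2Arrangements} rearranges to $\kappa=2\#\lt-r$, and combined with the third bullet $\#\lt\ge r-1$ this gives $\kappa\ge r-2\ge 1$ since $r\ge 3$.

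For (3), the hypothesis $|\A|<\sum_{X\in\lt}(|\A_X|-1)$ is equivalent (again by the first bullet of Proposition~\ref{prop:FreeTF2Arrangements}) to $\#\lt>r$, a property of $L(\A)$ alone, so the claim about $\mathcal{M}(L(\A))$ will follow once we show $(\A,\m)$ is non-free for every $\m$. By (1) this amounts to showing $\sum\overline{\psi_X}$ is never surjective, and this is the main obstacle. The approach I would take is a Hilbert series comparison: if $(\A,\m)$ were free with exponents $e_1,\ldots,e_\ell$, taking the Euler characteristic of $\cJ^\bullet$ and using Saito's relation $d_{X,1}+d_{X,2}=|\m_X|$ for the rank-two exponents of each $(\A_X,\m_X)$ forces the polynomial identity
\[
\sum_{i=1}^\ell t^{e_i}+\sum_{H\in\A}(e_H-1)t^{m_H}=\sum_{X\in\lt}(t^{d_{X,1}}+t^{d_{X,2}}).
\]
For the simple arrangement $\m\equiv 1$, the rank-two exponents are $(1,|\A_X|-1)$, so the coefficient of $t$ on each side yields $|\{i:e_i=1\}|=r-\#\lt$, which is negative when $\#\lt>r$ and gives an immediate contradiction. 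The remaining difficulty is to upgrade this coefficient-matching argument to arbitrary $\m$, where the Saito exponents $d_{X,1}$ can exceed $1$; my plan is to view the identity as a multiset matching between the $2\#\lt$ Saito exponents on the right and the exponents $e_i$ together with the weighted multiplicities $m_H$ (appearing $e_H-1$ times) on the left, and to use Saito's sum rule together with the lower bound $d_{X,j}\ge 1$ and the gap $\kappa>r$ to rule out any valid matching.
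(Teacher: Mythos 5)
Your diagram chase for the presentation of $H^2(\cJ^\bullet)$, your deduction of (1), and your proof of (2) are sound; in fact your (2) (rewriting the first bullet of Proposition~\ref{prop:FreeTF2Arrangements} as $\kappa=2\#\lt-r$ and invoking $\#\lt\ge r-1$) is a clean alternative to the paper's argument, which instead notes that $\kappa=0$ would make $\A$ totally free and hence reducible by the Abe--Terao--Yoshinaga classification. The genuine gap is in (3), which is the substantive claim, and it is twofold. First, the core assertion is left as a ``plan,'' and the plan as described cannot work: your Hilbert-series identity is correct, but it, together with Saito's sum rule and $d_{X,j}\ge 1$, is \emph{numerically satisfiable} when $\#\lt>r$, so no contradiction can be extracted at that level of generality. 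For instance, take the combinatorics of Example~\ref{ex:ZieglerPair} (nine lines, six triple points, every line on exactly two of them), with multiplicities $3,3,2$ on the three diagonals and $4,5,2,1,2,7$ on the six hexagon edges in cyclic order; the six flat totals are $14,12,9,6,6,11$, and the splits $(3,11),(3,9),(2,7),(2,4),(1,5),(2,9)$ reproduce the nine $\m(H)$'s exactly once each, leaving $\{9,9,11\}$ as putative exponents. So the obstruction is not numerical: one must use the structure of rank-two derivation modules. The paper's route is Lemma~\ref{lem:Boolean}: a generator of $D(\A_X,\m_X)$ whose value on some $\alpha_H$ is a unit times $\alpha_H^{\m(H)}$ cannot vanish on any other hyperplane of $\A_X$, and a degree count then shows at most \emph{one} generator per $X\in\lt$ can map onto a basis element of $\mbox{coker}(\iota)$; surjectivity of $\sum\overline{\psi_X}$ therefore forces $\kappa\le\#\lt$, i.e.\ $\#\lt\le r$, for every $\m$. (Trying instead to salvage the numerics with balancedness theorems for rank-two exponents would also import characteristic-zero hypotheses that the statement and the paper's proof do not need.)

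Second, the reduction of the moduli statement to ``the hypothesis $\#\lt>r$ is a property of $L(\A)$ alone'' is not valid: your non-freeness argument uses that the arrangement is $TF_2$, and formality (hence the $TF_2$ property) is \emph{not} determined by the lattice --- Example~\ref{ex:ZieglerPair} is precisely such a case; note also that total non-freeness itself is not combinatorial (Example~\ref{ex:TotallyNonFree}), so combinatoriality of the hypothesis does not transport the conclusion. One must argue the dichotomy the paper uses: by Lemma~\ref{lem:GenericFormal}, the totally formal arrangements $\A'\in\mathcal{M}(L(\A))$ form a Zariski open set on which the ranks of $\cS^i(\A')$ are lattice-determined, hence $\A'$ is again $TF_2$ and the counting argument applies; any $\A'$ outside this open set fails to be totally formal and is totally non-free by Corollary~\ref{cor:MultifreeFormal}. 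Without these two ingredients --- the Lemma~\ref{lem:Boolean} counting bound and the formal/non-formal dichotomy over the moduli space --- part (3) remains unproved.
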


\begin{remark}
	The presentation in Proposition~\ref{prop:H2Pres} is similar in spirit to a presentation derived in~\cite[Lemma~3.8]{LCoho} for a homology module which governs freeness of bivariate splines on triangulations.
\end{remark}

\begin{proof}
Since the commutative diagram in Figure~\ref{fig:H2Pres} has exact rows, the isomorphism 
\[
H^2(\cJ^\bullet)\cong \mbox{coker}\left(\bigoplus_{X\in\lt} D(\A_X,\m_X)\xrightarrow{\sum \overline{\psi_X}} \mbox{coker}(\iota)\right)
\]
follows from the tail end of the snake lemma.  The statement (1) now follows from the isomorphism $H^1(\cD^\bullet)\cong H^2(\cJ^\bullet)$ and Theorem~\ref{thm:Free}.

The ideals $J(H)\cong \langle \alpha_H^{\m(H)}\rangle$ are principal, so are isomorphic to the polynomial ring $S$ (up to a graded shift).  The rank of $\bigoplus J(H)$ is $|\A|$ and by the definition of the map $\iota$, we see that the kernel is spanned by the basis elements $[H]$ so that $H$ does not pass through any $X\in\lt$.  However, any such hyperplane is a \textit{generic} hyperplane; by Corollary~\ref{cor:genericHyperplane} the existence of such a hyperplane forces $\A$ to be non-formal.  Hence if $\A$ is $TF_2$, $\iota$ is injective.  Since $\bigoplus\limits_{X\in\lt}\left[ \bigoplus\limits_{H_i<X} J(H_i)\right]$ is a free module of rank $\sum_{X\in\lt} |\A_X|$, we have proved that $\mbox{coker}(\iota)\cong S^\kappa$.  The map $\iota$ is surjective if and only if $\kappa=0$, in which case $H^2(\cJ^\bullet)=0$ regardless of the multiplicity $\m$.  In this case $\A$ is totally free; by~\cite{TeraoTotallyFree} $\A$ is a product of one and two dimensional arrangements, violating the assumption that $\A$ is irreducible.  This proves (2).  

For (3), notice that, in order for $D(\A,\m)$ to be free, the image of $\sum \psi_X$ and the image of $\iota$ must span the entire free module $\bigoplus\limits_{X\in\lt}\left[ \bigoplus\limits_{H<X} J(H)\right]$.  Given (1), the image of $\iota$ does not span this entire free module.  This means that there are some basis elements $[X,H]$ of degree $\m(H)$ (for some hyperplane $H$) that remain in $\mbox{coker}(\iota)$.  In order to kill such basis elements, there must be a basis element $\theta_X\in D(\A_X,\m_X)$ of degree $\m(H)$ which does not vanish on $\alpha_H$.  Notice that for a fixed $X\in\lt$, there cannot be two distinct $H,H'\in\A_X$ so that $\deg(\theta_X)=\m(H)$, $\deg(\psi_X)=\m(H')$, with $\theta_X(\alpha_H)\neq 0$ and $\psi_X(\alpha_{H'})\neq 0$ (see Lemma~\ref{lem:Boolean}).  Hence there are at most $\#\lt$ derivations (one per $X\in\lt$) that can have the right form to cancel remaining basis elements of $\mbox{coker}(\iota)$; it follows that if $|\A|+\#\lt<\sum_{X\in\lt}(|\A_X|)$ then $\A$ is totally non-free, proving the first inequality of (3).  The equivalent formulation for the inequality follows from the equation $|\A|=r-\#\lt+\sum_{X\in\lt}(|\A_X|-1)$ from Proposition~\ref{prop:FreeTF2Arrangements}.  For the final statement of (3), it follows from Lemma~\ref{lem:GenericFormal} that $\A'\in\mathcal{M}(\A)$ is $TF_2$ on a Zariski open subset of $\mathcal{M}(L(\A))$.  Hence on this open set, total non-freeness of $\A'$ follows from the same computation.  Moreover, if $\A'$ is in the complement of this open set, $\A'$ is totally non-free by Corollary~\ref{cor:MultifreeFormal}.
\end{proof}

\begin{cor}\label{cor:TF2Restriction}
Suppose $\A$ is a $TF_2$ arrangement with $r(\A)>\#\lt$, and suppose $\mathcal{B}$ is an arrangement of rank four.  If $L(\mathcal{B})$ has two flats $X,Y\in L(\mathcal{B})$ so that $L(\A)\cong [X,Y]$, then $\mathcal{B}$ is not free.
\end{cor}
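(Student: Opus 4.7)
The plan is to exhibit an explicit closed sub-arrangement or hyperplane restriction of $\mathcal{B}$ whose intersection lattice is isomorphic to $L(\A)$, pass non-freeness to that arrangement via Proposition~\ref{prop:H2Pres}(3), and then lift non-freeness back to $\mathcal{B}$ using either Proposition~\ref{prop:pdimLB} or Ziegler's freeness theorem for the Ziegler restriction at a hyperplane.

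The combinatorial input is Proposition~\ref{prop:H2Pres}(3): under the hypothesis on $\A$, every $\A'\in\mathcal{M}(L(\A))$ is totally non-free, so no multi-arrangement structure on any arrangement with lattice $L(\A)$ is free. The standing assumption of \S\ref{sec:TF2} gives $r(\A)\ge 3$, and combined with $r(\A)=r(Y)-r(X)$ and $r(Y)\le r(\mathcal{B})=4$, the possibilities for $(r(X),r(Y))$ are confined to $(0,3)$, $(0,4)$, and $(1,4)$.

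In the cases with $r(X)=0$, i.e.\ $X$ is the ambient space $V=\hat{0}$, the interval $[V,Y]$ is the intersection lattice of the closed sub-arrangement $\mathcal{B}_Y$; hence $\mathcal{B}_Y\in\mathcal{M}(L(\A))$ is not free, and Proposition~\ref{prop:pdimLB} forces $\mbox{pdim}(D(\mathcal{B}))\ge\mbox{pdim}(D(\mathcal{B}_Y))\ge 1$, ruling out freeness of $\mathcal{B}$. When $(r(X),r(Y))=(0,4)$ this collapses to $\mathcal{B}_Y=\mathcal{B}$ directly, so $\mathcal{B}$ itself is the arrangement with lattice $L(\A)$.

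The remaining case $(r(X),r(Y))=(1,4)$ is where I expect the real work to lie. Here $X$ is a hyperplane of $\mathcal{B}$ and $[X,\hat{1}]$ is the intersection lattice of the restriction $\mathcal{B}^X$, so $\mathcal{B}^X$ as a simple arrangement lies in $\mathcal{M}(L(\A))$. The full force of the total non-freeness assertion of Proposition~\ref{prop:H2Pres}(3), rather than merely non-freeness of a single simple arrangement, is precisely what lets us conclude that the Ziegler multi-restriction $(\mathcal{B}^X,\m^X)$ is not free, since no multiplicity on $\mathcal{B}^X$ can be free; by Ziegler's theorem (a free arrangement has free Ziegler restriction at every hyperplane), $\mathcal{B}$ cannot be free. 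This case is the one that genuinely requires the multi-arrangement strength of the hypothesis on $\A$, and it is the main technical obstacle.
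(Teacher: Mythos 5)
Your proposal is correct and follows essentially the same route as the paper: identify the interval $[X,Y]$ with the intersection lattice of a closed sub-arrangement or a restriction of $\mathcal{B}$, apply Proposition~\ref{prop:H2Pres}(3) to conclude that arrangement is totally non-free, and then observe that freeness of $\mathcal{B}$ would force freeness of its closed sub-arrangements (Proposition~\ref{prop:pdimLB}) and of its Ziegler restrictions. The only cosmetic difference is that you cite Ziegler's theorem directly for the restriction case where the paper cites Theorem~\ref{thm:Yoshinaga}; both are valid, and your explicit case split on $(r(X),r(Y))$ just spells out what the paper leaves implicit.
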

\begin{proof}
If $L(\A)$ is isomorphic to an interval in $L(\mathcal{B})$, then $\mathcal{B}$ has either a closed sub-arrangement or a restriction which is in $\mathcal{M}(L(A))$.  In either case, the sub-arrangement or restriction is totally non-free by Proposition~\ref{prop:H2Pres}.  If $\mathcal{B}$ is free, any closed sub-arrangement is also free.  Moreover, the restriction of a free arrangement admits a free multiplicity by Theorem~\ref{thm:Yoshinaga}.  Hence $\mathcal{B}$ cannot be free.
\end{proof}

\begin{exm}[Ziegler's Pair]\label{ex:ZieglerPair}
Consider a central arrangement $\A$ of rank three with nine hyperplanes $\alpha_1,\ldots,\alpha_9$ whose lattice has 18 double points and six triple points, explicitly we assume $\lt=\{145,138,256,289,367,479\}$.  This arrangement can be realized as a line arrangement in $\mathbb{P}\kk^2$ as the lines extending the edges of a hexagon, along with three lines joining opposite vertices (thus the set $\lt$ forms the vertices of the hexagon).  Since there is a non-empty Zariski open space of $\mathcal{M}(L)$ on which $\A$ is $TF_2$ an $\#\lt=6>3=r(\A)$, Proposition~\ref{prop:H2Pres} implies that any $\A\in\mathcal{M}(L)$ is totally non-free.  By Corollary~\ref{cor:TF2Restriction}, no $\A\in\mathcal{M}(L)$ can be the restriction of a free arrangement.

This arrangement appears in~\cite{ZieglerMulti} and~\cite{YuzFormal} as an example of the non-combinatorial behavior of the minimal free resolution of $D(\A)$ and the formality of $\A$, respectively.  More precisely, it is known (due to Yuzvinsky~\cite{YuzFormal}, see also~\cite[Example~13]{SchenckComputationsConjectures}) that $\A$ is formal if and only if the points of $\lt$ do not lie on a conic in $\mathbb{P}^2$.  We may compute that $\cS^\bullet$ has the form $0\rightarrow S^3\xrightarrow{\delta^0_S} S^9 \xrightarrow{\delta^1_S} S^6\rightarrow 0$ if the six points do not lie on a conic and $0\rightarrow S^3 \xrightarrow{\delta^0_S} S^9 \xrightarrow{\delta^1_S} S^5\xrightarrow{\delta^2_S} S\rightarrow 0$ if the six points of $\lt$ do lie on a conic ($\delta^1_S$ drops rank).
\end{exm}

\subsection{A codimension two incidence graph}
The data in the presentation of $H^2(\cJ^\bullet)$ in Proposition~\ref{prop:H2Pres} can be combinatorially encoded using the \textit{codimension two incidence graph} of $\A$, which we denote by $G(\A)$.  The graph $G(\A)=(V,E)$ is a bipartite graph whose vertex set is partitioned as $V=\lt\cup\A$.  There is an edge $[X,H]$ between $X\in\lt$ and $H\in\A$ if and only if $H<X$ in $L(\A)$ (notice that we do not include codimension two flats which are intersections of just two hyperplanes).  Moreover, we define the \textit{reduced} codimension two incidence graph $\overline{G}(\A)$ by removing the vertices $H\in V(G(\A))$ of valence one (i.e. removing vertices corresponding to hyperplanes which only pass through a single flat $X\in\lt$).

Now we describe how $G(\A)$ and $\overline{G}(\A)$ are useful in the context of Proposition~\ref{prop:H2Pres}.  Referring to the diagram in Figure~\ref{fig:H2Pres}, consider the sub-module $N$ of $\bigoplus\limits_{X\in\lt}\bigoplus\limits_{H<X} J(H)$ generated by the image of $\iota$ and the image of $\sum\psi_X$.  Since $D(\A_X,\m_X)$ is a free rank two module for every $X\in\lt$, it is generated by two derivations; call these $\theta_X$ and $\psi_X$.  Then $N$ is generated by the columns of a matrix we denote $M=M(\theta_X,\psi_X\mid X\in\lt)$.  The rows of $M$ are naturally indexed by the formal symbols $[X,H]$ corresponding to basis elements of $\bigoplus_{X\in\lt}\bigoplus_{H<X} J(H)$ - equivalently we may assume the rows are indexed by edges of $G(\A)$.  The columns of $M$ are indexed either by hyperplanes $H'\in\A$ (these represent the image of $\iota$, one for each generator of $\bigoplus_{H\in\A}J(H)$) or pairs $(X',\theta_{X'})$ or $(X',\psi_{X'})$ where $X'\in\lt$ and $\theta_{X'},\psi_{X'}$ are generators of $D(\A_{X'},\m_{X'})$ (each pair represents the inclusion of a generator of $D(\A_{X'},\m_{X'})$).  The entries of $M$ are

\begin{center}
\begin{tabular}{rl}
$M_{[X,H],[H']}$ & $=\left\lbrace
\begin{array}{ll}
1 & H'=H\\
0 & H'\neq H
\end{array}
\right.,$ 
\\
$M_{[X,H],[X',\theta_{X'}]}$ & $=\left\lbrace
\begin{array}{ll}
\overline{\theta}_{X'}(\alpha_H) & X'=X\\
0 & X'\neq X
\end{array}
\right.,$\\
$\mbox{and } 
M_{[X,H],[X',\psi_{X'}]}$ & $=\left\lbrace
\begin{array}{ll}
\overline{\psi}_{X'}(\alpha_H) & X'=X\\
0 & X'\neq X
\end{array}
\right.,$
\end{tabular}
\end{center}
where $\overline{\theta}_{X'}(\alpha_H)=\dfrac{\theta_{X'}(\alpha_H)}{\alpha_H^{\m(H)}}$.  

Moreover we can associate the non-zero entries of $M$ to \textit{oriented} and labeled edges of $G(\A)$; the entry $M_{[X,H],[H]}$ corresponds to the orientation $X\to H$ of $[X,H]$ and the entry $M_{[X,H],[X,\theta_X]}$ corresponds to the orientation $H\to X$ of $[X,H]$, along with the label $\theta_X$ on the edge $[X,H]$.  If a vertex $H\in G(\A)$ has valence one, then the corresponding column of $M$ is a generator of $\bigoplus_{X\in\lt}\bigoplus_{H<X} J(H)$; since we are interested in the cokernel of $M$ we may reduce the matrix $M$ to the matrix $\overline{M}$ whose rows are indexed by pairs $[X,H]$ so that $H$ has valence at least two in $G(\A)$.  Clearly the rows of $\overline{M}$ are in bijection with edges of the reduced incidence graph $\overline{G}(\A)$.  Likewise the non-zero entries of $\overline{M}$ correspond to oriented and labeled edges of $\overline{G}(\A)$.

By Proposition~\ref{prop:H2Pres}, $D(\A,\m)$ is free if and only if the columns of $M(\theta_X,\psi_X\mid X\in\lt)$ generate the free module $\bigoplus_{X\in\lt}\bigoplus_{H<X} J(H)$.  As in the proof of Proposition~\ref{prop:H2Pres}, only one generator for each $D(\A_X,\m_X)$, $X\in\lt$, can map to a generator of $\bigoplus_{X\in\lt}\bigoplus_{H<X} J(H)$.  So we will consider sub-matrices of $\overline{M}$ obtained by choosing only a single generator for each $D(\A_X,\m_X)$.  We write $M'=M'(\theta_X \mid X\in\lt)$ for the sub-matrix of $M$ formed by choosing a single generator $\theta_X$ of each $D(\A_X,\m_X)$, $X\in\lt$.  Notice that the columns of $M'$ are now in bijection with the vertices of $\overline{G}$.  In the two cases we consider, maximal minors of $M'$ will be obtained by deleting at most one column.  Thus the terms of a maximal minor of $M'$ are in bijection with orientations of $\overline{G}$ so that every vertex corresponding to a non-deleted column has exactly one incoming edge.  We will use this observation in the next section.

\subsection{Characterization of free multiplicities on $TF_2$ arrangements}
Using Proposition~\ref{prop:H2Pres} we now characterize free multiplicities on $TF_2$ arrangements.  By Proposition~\ref{prop:H2Pres} and Proposition~\ref{prop:FreeTF2Arrangements} we are restricted to the two cases
\begin{itemize}
\item $|\A|=1+\sum_{X\in\lt} (|\A_X|-1)$ (equivalently $\A$ is a supersolvable $TF_2$ arrangement)
\item $|\A|=\sum_{X\in\lt} (|\A_X|-1)$ 
\end{itemize}

\begin{thm}[Free multiplicities on free $TF_2$ arrangements]\label{thm:FreeMultFreeTF2}
Suppose $\A$ is a free, hence supersolvable $TF_2$ arrangement.  By Proposition~\ref{prop:FreeTF2Arrangements}, $\overline{G}=\overline{G}(\A)$ is a tree.  Then $\m$ is a free multiplicity on $\A$ if and only if there is an orientation of $\overline{G}$ satisfying
\begin{enumerate}
\item Every vertex of $\overline{G}$ has at most one incoming edge.
\item The root vertex (no incoming edges) is some $X\in\lt$.
\item Given a directed edge $H\to X$, $\m(H)$ is an exponent of $D(\A_X,\m_X)$
\end{enumerate}
Equivalently, $\m$ is a free multiplicity if and only if there is an ordering $X_1,\ldots,X_{r-1}$ of $\lt$ and a supersolvable filtration $\A_1\subset\cdots\subset\A_r$ satisfying
\begin{enumerate}
\item $\A_2=\A_{X_1}$ and $\A_i=\A_{i-1}\cup\A_{X_{i-1}}$
\item $\A_{X_i}\cap \A_i=\{H_i\}$ for some $H_i\in\A$ ($H_1,\ldots,H_{r-1}$ not necessarily distinct)
\item $\m(H_i)$ is an exponent of $D(X_i,\m_{X_i})$
\end{enumerate}
\end{thm}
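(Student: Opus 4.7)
By Proposition~\ref{prop:H2Pres}, $(\A, \m)$ is free if and only if the map $\sum \overline{\psi_X}$ surjects onto $\mbox{coker}(\iota)$, or equivalently, the columns of the matrix $\overline{M} = \overline{M}(\theta_X, \psi_X \mid X \in \lt)$ generate the free module $\bigoplus_{[X,H] \in E(\overline{G})} J(H)$. Since this target is free, surjectivity is equivalent to the existence of a maximal minor of $\overline{M}$ which is a unit in $S$. Invoking Lemma~\ref{lem:Boolean} as in the proof of Proposition~\ref{prop:H2Pres}(3), at most one generator of each $D(\A_X, \m_X)$ can participate non-trivially in such a minor, so one restricts to the submatrix $M' = M'(\theta_X \mid X \in \lt)$ obtained by choosing a single generator $\theta_X$ per flat. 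Since $\overline{G}$ is a tree by Proposition~\ref{prop:FreeTF2Arrangements}, $M'$ has exactly one more column than row, and deleting the column indexed by some vertex $v_0 \in V(\overline{G})$ produces a square matrix.

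The central step is a tree-determinant computation. Each row of $M'$, indexed by an edge $[X,H]$ of $\overline{G}$, has nonzero entries only in the columns corresponding to its two endpoints: $M'_{[X,H],H} = 1$ and $M'_{[X,H],X} = \overline{\theta_X}(\alpha_H)$. Thus a nonzero term in the Leibniz expansion corresponds to a perfect matching between $E(\overline{G})$ and $V(\overline{G}) \setminus \{v_0\}$; since $\overline{G}$ is a tree, this matching is unique and arises from rooting at $v_0$, orienting every edge away from $v_0$, and matching each edge to its head. So up to sign,
\[
\det\bigl(M' \text{ with column } v_0 \text{ deleted}\bigr) = \prod_{H \to X} \overline{\theta_X}(\alpha_H),
\]
the product running over those edges whose head in the induced orientation is a flat vertex. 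Each factor $\overline{\theta_X}(\alpha_H)$ is homogeneous of degree $\deg(\theta_X) - \m(H)$, so the product is a unit in $S$ precisely when every factor is a nonzero scalar, i.e., when for every edge $H \to X$ one has $\deg(\theta_X) = \m(H)$ and $\theta_X(\alpha_H) \neq 0$. This is exactly the condition that $\m(H)$ is an exponent of $D(\A_X, \m_X)$ realized by a generator non-vanishing on $\alpha_H$, establishing the equivalence of freeness with conditions (1) and (3). Condition (2) then follows by a re-rooting argument: if $v_0 \in \A$ is a hyperplane vertex, flipping the unique edge from $v_0$ to an adjacent flat $X_1 \in \lt$ produces another valid orientation rooted at $X_1 \in \lt$, and the determinant only loses the (necessarily unit) factor $\overline{\theta_{X_1}}(\alpha_{v_0})$, hence remains a unit.

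For the equivalence with the supersolvable filtration formulation, one reads off a filtration $\A_1 \subset \cdots \subset \A_r = \A$ directly from the rooted tree. Setting $\A_2 = \A_{X_1}$ at the root $X_1 \in \lt$, one adjoins $\A_{X_i}$ in any order consistent with a breadth-first traversal of $\overline{G}$; the hyperplane $H_i \in \A_{X_i} \cap \A_i$ is then the endpoint of the unique edge from $X_i$ toward the already-constructed subtree, and orientation condition (3) translates verbatim into filtration condition (3). The principal obstacle will be tracking signs carefully in the tree-determinant expansion, and verifying the reverse direction — namely constructing a valid rooted orientation of $\overline{G}$ from a given supersolvable filtration, which amounts to checking that the sequence of incidences $\A_{X_i} \cap \A_i = \{H_i\}$ determines a spanning structure on $\overline{G}$.
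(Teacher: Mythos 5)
Your argument tracks the paper's proof almost step for step: the presentation from Proposition~\ref{prop:H2Pres}, the reduction (via Lemma~\ref{lem:Boolean}) to a single generator per flat and the matrix $M'$ whose columns are the vertices of $\overline{G}$, the observation that a tree has one more vertex than edge so maximal minors delete one column, the identification of the unique nonzero term of such a minor with the orientation of $\overline{G}$ away from the root, and the resulting formula $\prod_{H\to X}\overline{\theta}_X(\alpha_H)$ analyzed factor by factor. Your explicit re-rooting argument showing that the deleted column may be taken to correspond to a flat vertex is a nice addition (the paper merely asserts this), and the sign bookkeeping you worry about is immaterial, since only non-vanishing of the determinant up to scalar is at stake.

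There is, however, a genuine gap in the ``if'' direction. What your computation establishes is that $(\A,\m)$ is free if and only if, for every directed edge $H\to X$, there is a \emph{generator} of $D(\A_X,\m_X)$ of degree $\m(H)$ with $\overline{\theta}_X(\alpha_H)$ a nonzero scalar. Condition (3) of the theorem asserts only that $\m(H)$ is an exponent of $D(\A_X,\m_X)$, and the implication ``$\m(H)$ is an exponent $\Rightarrow$ some generator of degree $\m(H)$ does not annihilate $\alpha_H$'' is not automatic and is nowhere addressed in your write-up; a priori the degree-$\m(H)$ generator could satisfy $\theta_X(\alpha_H)=0$. This is exactly the point where the paper invokes Lemma~\ref{lem:Boolean} together with the fact that $\A_X$ is not boolean; concretely, if the degree-$\m(H)$ generator killed $\alpha_H$, its coefficient vector would be divisible by $\Q(\A_X,\m_X)/\alpha_H^{\m(H)}$, and a degree count combined with Saito's criterion (Proposition~\ref{prop:Saito}) rules this out. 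Some such argument is needed before you may replace your condition by condition (3). In addition, the second characterization via supersolvable filtrations is only sketched: you explicitly defer the converse passage from a filtration back to a rooted orientation of $\overline{G}$, whereas the theorem requires both directions; this direction is short (take $X_1$ as the root and orient away from it, checking that the incidences $\A_{X_i}\cap\A_i=\{H_i\}$ are precisely the edges of $\overline{G}$ directed into the flats), but it must be carried out rather than flagged.
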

\begin{proof}
By Proposition~\ref{prop:H2Pres} and the preceding discussion, $D(\A,\m)$ is free if and only if there are derivations $\theta_X\in D(\A_X,\m_X)$ so that the columns of $\overline{M'}=\overline{M'}(\theta_X\mid X\in\lt)$ generate $\bigoplus\limits_{X\in\lt}\bigoplus\limits_{H<X} J(H)$; in other words there should be a maximal minor with determinant equal to a non-zero constant.  By Proposition~\ref{prop:FreeTF2Arrangements}, we have $|\A|=1+\sum_{X\in\lt}(|\A_X|-1)$ or $|\A|+\#\lt=1+\sum_{X\in\lt}|\A_X|$.  It follows that the matrix $\overline{M'}$ has one more column than row; so the maximal minors are obtained by deleting a column of $\overline{M'}$.  We may assume that the deleted column corresponds to some $X\in\lt$.  Since $\overline{G}$ is a tree, an orientation of $\overline{G}$ satisfying that each vertex has at most one incoming edge is equivalent to a choice of root for the tree.  This in turn is equivalent to choosing a maximal minor of $\overline{M}$ (leave out the column corresponding to the root).  The maximal minor chosen in this way has determinant
\[
\prod\limits_{H\to X} \overline{\theta}_X(\alpha_H),
\]
where the product is taken over directed edges $H\to X$ in the directed tree $\overline{G}$.  This expression is a non-zero constant if and only if $\overline{\theta}_X(\alpha_H)$ is a non-zero constant (equivalently $\theta_X(\alpha_H)=\alpha_H^{\m(H)}$ up to constant multiple) for every directed edge $H\to X$.  Since $\A_X$ is not boolean for any $X\in\lt$, we see by Lemma~\ref{lem:Boolean} that $(\A_X,\m_X)$ cannot have an exponent smaller than $\m(H)$, so this is in turn equivalent to $(\A_X,\m_X)$ having an exponent of $\m(H)$ for every directed edge $H\to X$.  This proves the first characterization.  

We now show the second characterization in terms of supersolvable filtrations is equivalent to the first.  Given an orientation of $\overline{G}$, we can build the required filtration by setting $X_1$ equal to the root vertex and inductively selecting $X_{i+1}$ to satisfy 1) $X_i$ and $X_{i+1}$ are both adjacent to some $H\in\overline{G}$ and 2) $X_i\to H\to X_{i+1}$ is a directed path with respect to the chosen orientation on $\overline{G}$.  Conversely, given such a supersolvable filtration, we may orient $\overline{G}$ by taking $X_1$ to be the root.
\end{proof}

\begin{exm}\label{ex:TF2Graphic}
Suppose $\A$ is defined by $xyz(x-y)(y-z)$ (this is the graphic arrangement corresponding to a four-cycle with a chord).  Then $\overline{G}$ consists of two vertices corresponding to the triple points $X_1$ and $X_2$ defined by $xy(x-y)$ and $yz(y-z)$, respectively.  Clearly $\A$ is a supersolvable $TF_2$ arrangement.  By Theorem~\ref{thm:FreeMultFreeTF2}, $(\A,\m)$ is free if and only if either $D(X_1,\m_{X_1})$ or $D(X_2,\m_{X_2})$ has an exponent equal to $\m(y)$.  

If $\kk$ has characteristic zero, this happens if and only if $\m(y)\ge \m(x)+\m(x-y)-1$ or $\m(y)\ge \m(z)+\m(y-z)-1$ (by~\cite{Wakamiko}), which recovers Abe's classification in~\cite{AbeDeletedA3}.  In fact Abe's classification has a natural extension to any graphic $TF_2$ arrangement (these correspond to chordal graphs with two-dimensional clique complex).  For instance, suppose $\A$ is defined by $xyzw(x-y)(y-z)(z-w)$.  Then $\overline{G}(\A)$ has three vertices and Theorem~\ref{thm:FreeMultFreeTF2} combined with the classification in~\cite{Wakamiko} yields that $(\A,\m)$ is free if and only if
\begin{itemize}
\item $\m(y)\ge \m(x)+\m(x-y)-1$ and $\m(z)\ge \m(y)+\m(y-z)-1$ or
\item $\m(y)\ge \m(z)+\m(y-z)-1$ and $\m(z)\ge \m(w)+\m(z-w)-1$ or
\item $\m(y)\ge \m(x)+\m(x-y)-1$ and $\m(z)\ge \m(w)+\m(z-w)-1$.
\end{itemize}
Each of the three possibilities corresponds to a choice of root for $\overline{G}$.

By similar arguments it is not difficult to show that a constant multiplicity of value greater than one is never a free multiplicity on a graphic $TF_2$ arrangement of rank at least three over a field of characteristic zero.  In fact, if the constant multiplicity is free on a graphic arrangement over a field of characteristic zero then it is a product of braid arrangements \cite[Theorem~6.6]{GSplinesGraphicArr}.  In contrast, suppose $\kk$ is a field of characteristic $p$.  Then it is straightforward to check (using Saito's criterion), that 
\[
x^{p^k}\frac{\partial}{\partial x}+y^{p^k}\frac{\partial}{\partial y}\qquad\mbox{and}\qquad
x^{p^{k+1}}\frac{\partial}{\partial x}+y^{p^{k+1}}\frac{\partial}{\partial y}
\]
form a basis for the multi-arrangement defined by $x^{p^k}y^{p^k}(x-y)^{p^k}$ (here $k$ is any positive integer).  It follows from Theorem~\ref{thm:FreeMultFreeTF2} that the constant multiplicity of value $p^k$ is always free on a graphic $TF_2$ arrangement over a field of characteristic $p$.  Ziegler~\cite{ZieglerMatroid} has shown that freeness of simple arrangements may also depend on the characteristic of the field.
\end{exm}

\begin{exm}[Example~\ref{ex:multiplicitiessupersolvable}, continued]\label{ex:multiplicitiessupersolvable0}
Consider the arrangement $\A(\alpha,\beta)$ defined by $xyz(x-\alpha z)(x-\beta z)(y-z)$ where $\alpha,\beta\in\kk$.  This is a $TF_2$ arrangement with two rank two flats in $\lt$: the flat $X_1$ defined by $xz(x-\alpha z)(x-\beta z)$ and the flat $X_2$ defined by $yz(y-z)$.  The reduced graph $\overline{G}(\A)$ consists of the three vertices $H,X_1,X_2$ joined by the two edges $[H,X_1]$ and $[H,X_2]$.  By Theorem~\ref{thm:FreeMultFreeTF2} a multi-arrangement $(\A(\alpha,\beta),\m)$ is free if and only if either $D(\A_{X_1},\m_{X_1})$ or $D(\A_{X_2},\m_{X_2})$ has an exponent of $\m(z)$.  Example~\ref{ex:multiplicitiessupersolvable} continues the analysis for this multi-arrangement.
\end{exm}

\begin{remark}
The characterization in Theorem~\ref{thm:FreeMultFreeTF2} reduces the problem of determining free multiplicities on free $TF_2$ arrangements to the problem of determining when rank two multi-arrangements have an exponent which is equal to the multiplicity of one of its points, which is a difficult problem in general~\cite{DerProjLine}.  Somewhat surprisingly, free multiplicities on non-free $TF_2$ arrangements admit a complete description, at least in characteristic zero.
\end{remark}

Suppose $\A$ is a non-free $TF_2$ arrangement which admits a free multiplicity.  As mentioned earlier, $|\A|=\sum_{X\in\lt}(|\A_X|-1)$ or $|\A|+\#\lt=\sum |\A_X|$.  Since $\overline{G}(\A)$ is connected (see the proof of Proposition~\ref{prop:FreeTF2Arrangements}) and $\overline{G}(\A)$ has as many vertices as edges, there is a unique cycle in $\overline{G}(\A)$.  Write $C={H_0,X_0,H_1,X_1,\ldots,H_{k-1},X_{k-1},H_0}$ for this cycle, and let $\alpha_0,\ldots,\alpha_{k-1}$ be the corresponding linear forms to $H_0,\ldots,H_{k-1}$.  We observe that the linear forms $\alpha_0,\ldots,\alpha_{k-1}$ must be linearly independent.  To see this, define $\A'=\A_{X_0}\cup\A_{X_1}\cdots\cup\A_{X_{k-2}}$.  Then $\A'$ has rank $k$, contains all hyperplanes defined by $\alpha_0,\ldots,\alpha_{k-1}$, and every defining form of $\A'$ is expressible using $\alpha_0,\ldots,\alpha_{k-1}$.

\begin{thm}[Free multiplicities on non-free $TF_2$ arrangements]\label{thm:FreeMultNonFreeTF2}
Suppose $\A$ is a non-free $TF_2$ arrangement (over a field of characteristic zero) which admits a free multiplicity.  As above, let $C={H_0,X_0,H_1,X_1,\ldots,H_{k-1},X_{k-1},H_0}$ be the unique cycle in $\overline{G}=\overline{G}(\A)$.  Then $\m$ is a free multiplicity on $\A$ if and only if the following conditions are satisfied
\begin{enumerate}
\item $\m(H)=1$ for every $H\in\A$ which is not a vertex of $C$
\item There is an integer $n>0$ so that $\m(H)=n$ for every $H\in\A$ which is a vertex of $C$
\item There are $B_1,\ldots,B_k\in\kk$ satisfying
\begin{itemize}
\item $B_1\cdots B_k\neq 1$ and
\item for every $H\in\A_{X_i}\setminus\{H_i,H_{i+1}\}$ (indices taken modulo $k$), $\alpha_H$ can be written (up to scalar multiple) as $\alpha_H=\alpha_i+\beta^H_i\alpha_{i+1}$ (indices taken modulo $k$) for some $\beta^H_i\in\kk$ satisfying $(\beta^H_i)^{n-1}=B_i$
\end{itemize}
\end{enumerate}
\end{thm}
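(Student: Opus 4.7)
The plan is to invoke Proposition~\ref{prop:H2Pres} together with the matrix description developed just above Theorem~\ref{thm:FreeMultFreeTF2}, reducing freeness of $(\A,\m)$ to the existence of generators $\theta_X\in D(\A_X,\m_X)$, one per $X\in\lt$, such that the determinant of the square matrix $\overline{M'}(\theta_X\mid X\in\lt)$ is a non-zero constant of $S$. Since $\A$ is non-free but admits the free multiplicity $(\A,\m)$, Proposition~\ref{prop:FreeTF2Arrangements} and Proposition~\ref{prop:H2Pres}(3) place us in the balanced case $|\A|=\sum_{X\in\lt}(|\A_X|-1)$, so $\overline{G}=\overline{G}(\A)$ has equally many vertices and edges. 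As $\overline{G}$ is connected (by the argument in the proof of Proposition~\ref{prop:FreeTF2Arrangements}), it contains a unique cycle, and the hypothesis identifies this cycle with $C=H_0,X_0,\dots,H_{k-1},X_{k-1},H_0$.

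Next I would expand $\det\overline{M'}$ combinatorially: each non-zero term corresponds to a bijection from edges to vertices of $\overline{G}$ sending each edge to one of its endpoints, equivalently to an orientation of $\overline{G}$ in which every vertex has in-degree one. Because $\overline{G}$ has exactly one cycle, such orientations are parametrized by the two orientations of $C$, with every tree edge forced to point away from $C$. This yields a factorization
\[
\det\overline{M'}=\pm\, T(\theta_X\mid X\in\lt)\cdot\Bigl(\,\prod_{i=0}^{k-1}\overline{\theta}_{X_i}(\alpha_i)\;\pm\;\prod_{i=0}^{k-1}\overline{\theta}_{X_i}(\alpha_{i+1})\Bigr),
\]
where $\overline{\theta}_X(\alpha_H):=\theta_X(\alpha_H)/\alpha_H^{\m(H)}$ and $T$ is the product over the forced tree orientations.

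I would then rule out non-trivial tree structure, i.e.\ show $\overline{G}=C$. Each tree factor $\overline{\theta}_X(\alpha_H)$ must be a non-zero constant, forcing $\deg\theta_X=\m(H)$ and $\theta_X(\alpha_H)\not\equiv0$. Since every $X\in\lt$ satisfies $|\A_X|\geq 3$ and hence $\A_X$ is not Boolean, Lemma~\ref{lem:Boolean} then forces $\m(H)$ to be maximal among the multiplicities of $\A_X$. Iterating this maximality along any tree path rooted at $C$, and matching degrees between neighboring tree flats, yields incompatible inequalities unless the tree is empty. The same argument also forces $\m(H)=1$ for every $H\in\A$ lying outside $C$: both tree hyperplanes and hyperplanes meeting just one flat of $\lt$ appear as extra hyperplanes in some $\A_{X_i}$ and must not obstruct existence of the required basis derivation, yielding condition~(1).

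With $\overline{G}=C$, for each cycle flat $X_i$ I would work in coordinates where $\alpha_i,\alpha_{i+1}$ form a basis of the relevant 2-dimensional quotient; this is legitimate since $\alpha_0,\dots,\alpha_{k-1}$ are linearly independent, as noted just before the theorem statement. For both $\overline{\theta}_{X_i}(\alpha_i)$ and $\overline{\theta}_{X_i}(\alpha_{i+1})$ to be non-zero constants one must have $\deg\theta_{X_i}=\m(H_i)=\m(H_{i+1})$; consistency around $C$ then forces a single common value $n$, giving condition~(2), and the derivation takes the form $\theta_{X_i}=A_i\alpha_i^n\partial_{\alpha_i}+D_i\alpha_{i+1}^n\partial_{\alpha_{i+1}}$ with $A_i,D_i\in\kk^\times$. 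For any $H\in\A_{X_i}\setminus\{H_i,H_{i+1}\}$, writing $\alpha_H=\alpha_i+\beta_i^H\alpha_{i+1}$ and expanding $\theta_{X_i}(\alpha_H)=A_i\alpha_i^n+D_i\beta_i^H\alpha_{i+1}^n$, the condition $\theta_{X_i}(\alpha_H)\in\langle\alpha_H^{\m(H)}\rangle$ becomes, in characteristic zero, precisely $\m(H)=1$ together with $(\beta_i^H)^{n-1}$ equal to a common constant $B_i$ depending only on $D_i/A_i$. This supplies the structural part of~(3) and the remaining multiplicity-1 conclusions of~(1). Finally, the non-vanishing of $\prod A_i\pm\prod D_i$ translates via $B_i=\pm D_i/A_i$ (the sign depending on the parity of $n$ and on the permutation sign of the cycle matching) into $B_1\cdots B_k\neq 1$, the last piece of~(3). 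The converse follows by running the same computation in reverse: given (1)--(3), the explicit $\theta_{X_i}$ above produces a square submatrix of $\overline{M'}$ with non-vanishing constant determinant, so $(\A,\m)$ is free by Proposition~\ref{prop:H2Pres}.

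The principal obstacle is the rigorous elimination of non-trivial trees in $\overline{G}$: one must combine the multiplicity constraints forced along tree paths with the non-Boolean hypothesis on the rank-two arrangements to derive a contradiction. Careful sign-tracking in the cycle sum, needed to convert the non-vanishing condition into the precise form $B_1\cdots B_k\neq 1$, is a secondary but non-trivial technicality.
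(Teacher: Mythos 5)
Your reduction to $\det\overline{M'}$ via Proposition~\ref{prop:H2Pres} and the two-term cycle expansion agrees with the paper, and your computation at the cycle flats (forcing $\theta_{X_i}=A_i\alpha_i^n\partial_{\alpha_i}+D_i\alpha_{i+1}^n\partial_{\alpha_{i+1}}$ and then reading off $\m(H)=1$ and $(\beta_i^H)^{n-1}=B_i$ from divisibility of $A_i\alpha_i^n+D_i\beta_i^H\alpha_{i+1}^n$ by $\alpha_H^{\m(H)}$ in characteristic zero) is a legitimate, more self-contained substitute for the paper's route through balancedness, Abe's bound on exponents of balanced $2$-multi-arrangements, and Lemma~\ref{lem:nn11exp}. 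The genuine gap is the step ``rule out non-trivial tree structure, i.e.\ show $\overline{G}=C$.'' That claim is false, is not asserted by the theorem, and is not what the paper proves: condition (1) explicitly allows hyperplanes outside $C$, and these may be attached to flats of $\lt$ not lying on $C$, so $\overline{G}$ is in general a unicyclic graph with trees hanging off the cycle. The maximality you extract from Lemma~\ref{lem:Boolean} along a tree edge $H\to X$ says only that $\m(H)\ge\m(H')$ for all $H'\in\A_X$, and this is perfectly consistent along tree paths: a tree flat with multiplicities $(n,1,\ldots,1)$, the multiplicity-$n$ hyperplane being the neighbor toward the cycle, yields no contradiction, since in coordinates where that hyperplane is $x=0$ and the others are $x-a_jy$, the derivation $x^n\tfrac{\partial}{\partial x}+x^{n-1}y\tfrac{\partial}{\partial y}$ lies in $D(\A_X,\m_X)$, has degree $n$, and evaluates to $x^n$ on $x$. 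Concretely, the rank-four arrangement $x_1x_2x_3x_4(x_1-x_2)(x_2-x_3)(x_3-tx_1)(x_1-x_4)(x_1+x_4)$ with $t\neq 0,1$ is a non-free $TF_2$ arrangement falling under the theorem's hypotheses whose reduced graph is the six-cycle through $x_1,x_2,x_3$ and the three triple points of its $X_3$-subarrangement together with a pendant edge joining $x_1$ to the flat $x_1=x_4=0$; the multiplicities of the shape prescribed by (1)--(3) are free here, so the ``incompatible inequalities'' you hope to derive do not exist.

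Because of this, your argument does not establish condition (1) for hyperplanes meeting no cycle flat, and your converse is incomplete: when $\overline{G}\neq C$ one must additionally exhibit, for every tree flat $X$, a generator of $D(\A_X,\m_X)$ whose degree equals the multiplicity of the unique neighbor $H$ toward the cycle and which evaluates to a unit times $\alpha_H^{\m(H)}$ on $\alpha_H$, so that all tree factors in equation~\eqref{eq:det} are non-zero constants. The paper keeps these tree factors throughout: in the forward direction it forces each to be a unit and uses Lemma~\ref{lem:Boolean} (together with the analysis at cycle flats) to conclude $\m\equiv 1$ off the cycle; it never shrinks $\overline{G}$ to $C$. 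A secondary point you should also address: a priori only the product of the tree factors with the \emph{difference} of the two cycle products is a non-zero constant, so you need the homogeneity argument (both cycle products have the same degree, every factor has non-negative degree, and identically vanishing factors are excluded by Lemma~\ref{lem:Boolean}) before concluding that each $\overline{\theta}_{X_i}(\alpha_i)$ and $\overline{\theta}_{X_i}(\alpha_{i+1})$ is individually a non-zero constant, which is what launches your degree bookkeeping.
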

\begin{proof}
By Proposition~\ref{prop:H2Pres}, we have $|\A|=\sum_{X\in\lt}(|\A_X|-1)$ or $|\A|+\#\lt=\sum |\A_X|$.  So for any choice of $\theta_X$ for every $X\in\lt$ the matrix $\overline{M'}=\overline{M'}(\theta_X\mid X\in\lt)$ is a square matrix.  We find its determinant.  A term of $\det(\overline{M'})$ corresponds to an orientation of $\overline{G}$ in which every vertex has exactly one incoming edge.  Since $\overline{G}$ has a unique cycle, such an orientation of $\overline{G}$ is determined by an orientation of the cycle (every other edge must be directed `away' from the cycle).  Since there are only two choices of orientation for the cycle $C$ which satisfy that every vertex has exactly one incoming edge, there are only two terms in $\det(\overline{M})$.  In fact, if $C={H_0,X_0,H_1,X_1,\ldots,H_{k-1},X_{k-1},H_0}$,
\begin{equation}\label{eq:det}
\det(\overline{M})=\left(\prod\limits_{i=0}^{k-1} \overline{\theta}_{X_i}(\alpha_i)-\prod\limits_{i=0}^{k-1} \overline{\theta}_{X_i}(\alpha_{i+1})\right)\prod_{(H\to X)\notin C} \overline{\theta}_X(\alpha_H),
\end{equation}
where the index $i+1$ is taken modulo $k$ and the directed edge $H\to X$ is the unique direction `away' from the cycle $C$.  From Proposition~\ref{prop:H2Pres}, $(\A,\m)$ is free if and only if there is a choice of $\theta_{X}$ for every $X\in\lt$ so that the determinant~\eqref{eq:det} is a non-zero constant.  We assume that we have such a choice of $\theta_X, X\in\lt$, and deduce the form for $(\A,\m)$ given in the theorem.  Lemma~\ref{lem:Boolean} guarantees that $\overline{\theta}_{X}(\alpha_H)\neq 0$ for any $X\in\lt$ and $H<X$.  Now, fixing an arbitrary $X_i$ in the cycle $C$, we must have $\theta_{X_i}(\alpha_i)=s_i\alpha_i^{\m(\alpha_i)}$ and $\theta_{X_i}(\alpha_{i+1})=t_i\alpha_{i+1}^{\m(\alpha_{i+1})}$ for some non-zero constants $s_i$ and $t_i$.  Hence $\m(\alpha_i)=\m(\alpha_{i+1})=\deg(\theta_{X_i})$.  Reading around the cycle $C$, we see that $\m(\alpha_0)=\m(\alpha_1)=\cdots=\m(\alpha_{k-1})=n$ for some positive integer $n$, proving (2).  

Next again fix an arbitrary $X_i$ in the cycle $C$ and consider the multi-arrangement $(\A_{X_i},\m_{X_i})$.  Since $X_i$ has rank $2$, we may assume $(\A_{X_i},\m_{X_i})$ is defined by $\Q(\A_{X_i},\m_{X_i})=x^ny^n\prod_{j=1}^k(x-a_jy)^{m_j}$ for some integer $k\ge 1$ (since $X\in\lt$) and some non-zero constants $a_1,\ldots,a_k$ (we are writing $m_j$ for $\m(x-a_jy)$).  Notice that $m_j\le n$ for all $j=1,\ldots,k$ since $\theta_{X_i}$ has degree $n$ (this is easily seen by applying Lemma~\ref{lem:Boolean}).  In particular, $(\A_X,\m_X)$ is \textit{balanced} - i.e. $2n\le |\m_X|=2n + \sum_{i=1}^k m_i$.

Next, a result of Abe~\cite[Theorem~1.6]{AbeFreeness3Arrangements} shows that the exponents of a balanced $2$-multi-arrangement differ by at most $|\A|-2=k$.  Write $d^{X_i}_1\ge d^{X_i}_2$ for the exponents of $(\A_{X_i},\m_{X_i})$, and remember that we are assuming $d^{X_i}_2=\deg(\theta_{X_i})=n$.  From Abe's result we get that $|d^{X_i}_1-d^{X_i}_2|=d^{X_i}_1-n\le k$, so $d^{X_i}_1\le n+k$.  But $|\m_{X_i}|=2n+\sum_{j=1}^k m_j=n+d^{X_i}_2$, so  $d^{X_i}_2=n+\sum_{i=1}^k m_i\le n+k$ (this last inequality follows from the previous sentence).  Since $m_j\ge 1$ for every $j$, we must have $m_j=1$ for each $j=1,\ldots,k$.  Now, applying Lemma~\ref{lem:nn11exp} implies that $a_1^{n-1}=\cdots=a_k^{n-1}$.  This yields the second bullet point under (3).  

As remarked just prior to the statement of Theorem~\ref{thm:FreeMultNonFreeTF2}, $\alpha_0,\cdots,\alpha_{k-1}$ are linearly independent.  Change coordinates so that $\alpha_0=x_0,\ldots,\alpha_{k-1}=x_{k-1}$.  Lemma~\ref{lem:nn11exp} again yields that the derivation $\theta_{X_i}$ has the form $\theta_{X_i}=x_i^n\frac{\partial}{\partial x_i}+B_ix_{i+1}^n\frac{\partial}{\partial x_{i+1}}$.  Plugging this into equation~\eqref{eq:det} yields
\begin{equation}\label{eq:det2}
\det(\overline{M})=\left(1-\prod_{i=0}^{k-1} B_i\right)\prod_{(H\to X)\notin C} \overline{\theta}_X(\alpha_H),
\end{equation}
yielding the first bullet point under (3) since this must be a \textit{non-zero} constant.

Now we prove (1).  If $H\in\A$ is not a vertex of $C$ but there is some $X\in C$ so that $H<X$, then $H\in\A_X$ and $\m(H)=1$ since $H\notin C$.  So suppose $H\in\A$ but $H\nless X$ for any $X\in C$.  Then $H<X$ for some $X\in\lt$, and $X\notin C$.  Then there is a unique $H'$ so that $H'$ is closer to $C$ than $X$ as vertices of $\overline{G}$.  Thus $H'\to X$ is a directed edge in any orientation of $\overline{G}$ satisfying that every vertex has a unique incoming edge.  Thus $\theta_X(\alpha_H)$ appears in the expression of Equation~\eqref{eq:det2} and $\theta_X(\alpha_H)=\alpha_H^{\m(H)}=\alpha_H$ (up to constant multiple, since we assume the right hand side of Equation~\eqref{eq:det} is a non-zero constant).  It follows from Lemma~\ref{lem:Boolean} that $(A_X,\m_X)$ is simple, i.e. $\m_X\equiv 1$.  Hence $\m(H)=1$ as well.

Finally, suppose $\A$ is a non-free $TF_2$ arrangement and $(\A,\m)$ has the form indicated in the statement of the theorem.  Then clearly $\det(\overline{M})$ is a non-zero constant by equation~\eqref{eq:det2}, so $(\A,\m)$ is free by Proposition~\ref{prop:H2Pres}.
\end{proof}

\begin{exm}[Example~\ref{ex:TotallyNonFree}, revisited]\label{ex:TotallyNonfree0}
Consider the arrangement $\A(\alpha,\beta)$ defined by $xyz(x-\alpha y)(x-\beta y)(y-z)(z-x)$, where $\alpha,\beta\in\kk$.  This is a non-free $TF_2$ arrangement with three rank two flats in $\lt$: the flat $X_0$ defined by $xy(x-\alpha y)(x-\beta y)$, the flat $X_1$ defined by $yz(y-z)$, and the flat $X_2$ defined by $xz(x-z)$.  The reduced graph $\overline{G}(\A)$ consists of the cycle $C=\{H_0,X_0,H_1,X_1,H_2,X_2,H_0\}$, where $H_0=V(x),H_1=V(y),$ and $H_2=V(z)$.  By Theorem~\ref{thm:FreeMultFreeTF2} the $(\A(\alpha,\beta),\m)$ is free if and only if $\Q(\A,\m)$ has the form
\[
\Q(\A,\m)=x^ny^nz^n(x-\alpha y)(x-\beta y)(y-z)(z-x),
\]
where $\alpha^{n-1}=\beta^{n-1}\neq 1$.
\end{exm}

\subsection{Further counterexamples to Orlik's conjecture}
In this section we consider the family of arrangements $\A_{r,t}$ with defining polynomial
\[
\Q(\A_{r,t})=x_0\left(\prod\limits_{i=1}^r (x^2_i-x^2_0) \right) (x_1-x_2)\cdots (x_{r-1}-x_r)(x_r-tx_1),
\]
where $t\neq 0\in\kk$.  Write $H_0=V(x_0)$.  The restriction $\A^{H^0}_{r,t}$ has defining polynomial
\[
\Q(\A^{H_0}_{r,t})=\left(\prod\limits_{i=1}^r x_i \right) (x_1-x_2)\cdots (x_{r-1}-x_r)(x_r-tx_1).
\]
Ziegler's multi-restriction has the defining polynomial
\[
\Q(\A^{H_0},\m^{H_0})=\left(\prod\limits_{i=1}^r x_i^2 \right) (x_1-x_2)\cdots (x_{r-1}-x_r)(x_r-tx_1)
\]


\begin{prop}\label{prop:Xr}
If $t\neq 1$ and $\kk$ has characteristic zero, the arrangement $\A_{r,t}$ satisfies
\begin{enumerate}
\item $(\A^{H_0}_{r,t},\m^{H_0})$ is free for $t\neq 0,1$,
\item $\A_{r,t}$ is free if and only if $t=-1$,
\item The minimal free resolution of $D(\A^{H_0}_{r,t})$ is a twisted and truncated Koszul complex, $\reg(D(\A^{H_0}_{r,t}))=3$, and $\mbox{pdim}(D(\A^{H_0}_{r,t}))=r-2$ (the maximum).
\end{enumerate}
\end{prop}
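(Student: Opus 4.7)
The plan is to analyze $\A^{H_0}_{r,t}$ as a $TF_2$ arrangement whose reduced codimension-two incidence graph is a single cycle. Its hyperplanes are $H_i = V(x_i)$ for $1 \le i \le r$, the adjacent differences $D_i = V(x_i - x_{i+1})$ for $1 \le i < r$, and the closing hyperplane $D_r = V(x_r - tx_1)$. A direct enumeration of rank-two intersections shows that the triple points are exactly $X_i = V(x_i, x_{i+1})$ for $i < r$ and $X_r = V(x_1, x_r)$, each supporting the three hyperplanes $\{H_i, H_{i+1}, D_i\}$. Thus $\#\lt = r$; by Proposition~\ref{prop:FreeTF2Arrangements}, $\A^{H_0}_{r,t}$ is not free, and one verifies directly (for instance by computing $\cS^\bullet$) that it is $TF_2$. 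The reduced incidence graph $\overline{G}$ is therefore a single cycle of length $2r$ alternating $H_i$'s and $X_i$'s.

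For Part (1), I would apply Theorem~\ref{thm:FreeMultNonFreeTF2} to this cycle. The Ziegler multiplicity has $\m^{H_0}(H_i) = 2$ (since $V(x_i \pm x_0)$ both restrict to $H_i$) and $\m^{H_0}(D_i) = 1$, matching conditions (1) and (2) with $n = 2$. For condition (3), expanding the third form of each triple gives $\alpha_{D_i} = \alpha_{H_i} - \alpha_{H_{i+1}}$ for $i < r$ and $\alpha_{D_r} = \alpha_{H_r} - t\alpha_{H_1}$, so $\beta_i = -1$ and $\beta_r = -t$; with $n - 1 = 1$, the product $\prod_i B_i = (-1)^r t$, and its failure to equal $1$ encodes the asserted freeness condition.

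For Part (2), I would invoke Yoshinaga's Theorem~\ref{thm:Yoshinaga} with $H = H_0$. Condition (i) is supplied by Part (1). For condition (ii), each $Y \in L_3(\A_{r,t})$ with $H_0 < Y$ corresponds to a codimension-two flat of $\A^{H_0}_{r,t}$: generic double points lift to three-hyperplane Boolean sub-arrangements (automatically free), and the triple points $X_i$ lift to six-hyperplane rank-three sub-arrangements of the form $u(v^2 - u^2)(w^2 - u^2)(v - w)$ for $i < r$ or $u(v^2 - u^2)(w^2 - u^2)(v - tw)$ for $i = r$. The former is supersolvable for every parameter; the latter is free precisely when $t = -1$, which combined with Part (1) forces $t = -1$.

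For Part (3), I work with the simple arrangement $\A^{H_0}_{r,t}$. Since $\A^{H_0}_{r,t}$ is essential and $TF_2$, one has $\cS^i = \cJ^i = \cD^i = 0$ for $i > 2$ and $H^i(\cS^\bullet) = 0$ for all $i$; the long exact sequence of Remark~\ref{rem:LES} then gives $D(\A^{H_0}_{r,t}) = H^0(\cD^\bullet) \cong H^1(\cJ^\bullet)$ and $H^1(\cD^\bullet) \cong H^2(\cJ^\bullet)$, with all other cohomology vanishing. Proposition~\ref{prop:H2Pres} presents $H^2(\cJ^\bullet)$ as the cokernel of the cycle matrix on $\overline{G}$; unwinding this, and using that the cycle-vertex forms $x_1, \ldots, x_r$ are linearly independent and hence a regular sequence, identifies $H^2(\cJ^\bullet)$, up to a degree shift, with $S/(x_1, \ldots, x_r)$, whose minimal free resolution is the Koszul complex. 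Splicing this Koszul resolution with the Cohen--Macaulay resolutions of $\cD^0 \cong S^r$, $\cD^1 \cong \bigoplus_H S/\langle \alpha_H\rangle$, and $\cD^2 \cong \bigoplus_{X \in \lt} S/I(X)$ via the short exact sequences arising from $\cD^\bullet$, and applying Proposition~\ref{prop:BoundingProjectiveDimension}, produces a minimal free resolution of $D(\A^{H_0}_{r,t})$ in the form of a truncated and shifted Koszul complex. The conclusions $\mbox{pdim}(D) = r - 2$ (matching the maximum from Lemma~\ref{lem:globalUB}) and $\reg(D) = 3$ then follow. The principal obstacle is the explicit identification of $H^1(\cD^\bullet)$ with $S/(x_1, \ldots, x_r)$ (shifted): Proposition~\ref{prop:H2Pres} supplies a cokernel with the correct number of generators in the correct degrees, but pinning down the relation module as exactly $(x_1, \ldots, x_r)$ requires carefully tracing the maps $\psi_{X_i}$ around the cycle $\overline{G}$.
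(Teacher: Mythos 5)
Your overall route is the same as the paper's---exhibit $\A^{H_0}_{r,t}$ as a $TF_2$ arrangement whose reduced incidence graph $\overline{G}$ is a $2r$-cycle, apply Theorem~\ref{thm:FreeMultNonFreeTF2} for (1), Yoshinaga's criterion for (2), and the presentation of Proposition~\ref{prop:H2Pres} for (3)---but two of your verifications are off and the heart of (3) is missing. In (1), the quantity you compute, $\prod_i B_i=(-1)^rt$, does not ``encode the asserted freeness condition'': for odd $r$ the condition $(-1)^rt\neq 1$ reads $t\neq -1$, contradicting both the claim and part (2) at $t=-1$. The invariant that actually controls Equation~\eqref{eq:det2} is the product of the coefficients $a_H^{n-1}$ of the degree-$n$ derivations from Lemma~\ref{lem:nn11exp}, i.e.\ one writes $\alpha_H=\alpha_i-a_H\alpha_{i+1}$ (the convention in the statement of Theorem~\ref{thm:FreeMultNonFreeTF2} differs by a sign per flat from the derivation coefficients used in its proof, so the safe check is against the lemma). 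Here $\theta_{Y_i}=x_i^2\frac{\partial}{\partial x_i}+x_{i+1}^2\frac{\partial}{\partial x_{i+1}}$ for $i<r$ and $\theta_{Y_r}=x_r^2\frac{\partial}{\partial x_r}+t\,x_1^2\frac{\partial}{\partial x_1}$, the two cycle products are $1$ and $t$, and the determinant is a nonzero constant exactly when $t\neq 1$, for every $r$. In (2), your list of rank-three flats over $H_0$ is incomplete: a double point $V(x_i,x_j)$ of the restriction lifts to the five-hyperplane arrangement $x_0(x_i^2-x_0^2)(x_j^2-x_0^2)$ (a deleted $A_3$), and $V(x_i,x_j-x_{j+1})$ lifts to four hyperplanes isomorphic to $A_1\times A_2$; only flats of the form $V(x_j-x_{j+1},x_k-x_{k+1})$ give Boolean triples. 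These extra localizations are all free, so the conclusion survives, but the verification as written is false; also, for $t\neq-1$ the non-freeness of $\A_{r,t}$ should be deduced from the non-free localization at $V(x_0,x_1,x_r)$ via Proposition~\ref{prop:pdimLB}, not from the failure of Yoshinaga's two conditions at the particular hyperplane $H_0$.

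For (3) the gap is genuine: the ``principal obstacle'' you name is precisely the content of the paper's proof. The paper applies the snake lemma to Figure~\ref{fig:H2Pres} (using that $\iota$ is injective) to obtain the four-term exact sequence $0\to D(\A^{H_0}_{r,t})\to\bigoplus_{Y\in\lt}D((\A^{H_0}_{r,t})_Y)\to S(-1)^{r}\to H^2(\cJ^\bullet)\to 0$, writes out the cycle matrix in the bases $[Y_i,\alpha_i]$, and uses the Euler derivations to prune it to $0\to D(\A^{H_0}_{r,t})\to S(-1)\oplus S(-2)^r\xrightarrow{T}S(-1)\to S/\mathfrak{m}\to 0$ with $T=[\,0,\;x_1-x_2,\;\ldots,\;x_r-tx_1\,]$. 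Note the relevant ideal is generated by the differences, not by $x_1,\ldots,x_r$; it equals $\mathfrak{m}$ precisely because those $r$ forms are independent when $t\neq 1$, which is where that hypothesis enters (3). From this one reads off $D(\A^{H_0}_{r,t})\cong S(-1)\oplus K_2(\mathfrak{m})(-1)$, and the truncated, shifted Koszul resolution, the $\binom{r}{2}$ generators of degree $3$ besides the Euler derivation, $\mbox{pdim}=r-2$, and $\reg=3$ all follow at once from this isomorphism. Your substitute---splicing resolutions of $\cD^0,\cD^1,\cD^2$ with the Koszul complex and invoking Proposition~\ref{prop:BoundingProjectiveDimension}---only recovers the projective dimension; it does not establish minimality, the generator degrees, or the regularity, so you still need to carry out the explicit trace of the maps $\psi_{Y_i}$ around the cycle that you deferred.
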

\begin{proof}
Write $X_{r,t}$ for $\A_{r,t}^{H_0}$, $\alpha_i$ for $x_i$ ($i=1,\ldots,r$), $\beta_i$ for $x_i-x_{i+1}$ ($i=1,\ldots,r-1$), and $\beta_r$ for $x_r-tx_1$.  The space of all relations on the linear forms of $X_r$ is an $r$-dimensional space.  Write $Y_i$ for the `triple flat' of codimension two given by the vanishing of the forms $\alpha_i,\alpha_{i+1},\beta_i$ for $i=1,\ldots,r-1$, and write $Y_r$ for the flat determined by $\alpha_1,\alpha_r,\beta_r$.  Clearly $\lt=\{Y_1,\ldots,Y_r\}$ and it is not difficult to see that each $Y_i$ contributes one relation to the relation space and they are all linearly independent, hence $X_{r,t}$ is a $TF_2$ arrangement.  Since $\#\lt=r$, the rank of $X_{r,t}$, it follows from Theorem~\ref{thm:FreeMultNonFreeTF2} that $\m^{H_0}$ is a free multiplicity on $X_{r,t}$, proving (1).

For (2), we use Theorem~\ref{thm:Yoshinaga}.  We already have $(\A^{H_0}_{r,t},\m^{H_0})$ free by (1), so we consider local freeness of $\A_{r,t}$ along $H_0$.  If $t\neq -1$, then the closed sub-arrangement with defining equation
\[
(x_1^2-x_0^2)(x_r^2-x_0^2)(x_r-tx_1)x_0
\]
is not free, so neither is $\A_{r,t}$.  So we need to prove local freeness when $t=-1$.  The closed sub-arrangements of $\A_{r,-1}$ along $H_0$ are isomorphic to $A_1\times A_1\times A_1,A_1\times A_2,A_3$ with a hyperplane removed (the \textit{deleted} $A_3$ arrangement), or $A_3$.  Since these are all free, $\A_{r,-1}$ is free by Theorem~\ref{thm:Yoshinaga}.

For (3), we use the presentation from Proposition~\ref{prop:H2Pres}.  We consider only the case $\m\equiv 1$.  As in Proposition~\ref{prop:H2Pres}, write formal symbols $[H]$ (or $[\alpha_H]$) for the generator of $J(H)=\langle \alpha_H \rangle$ and $[X,H]$ (or $[X,\alpha_H]$) for the generator of $J(H)$ inside the direct sum $\bigoplus_{X\in\lt}\bigoplus_{H<X} J(H)$.  In the case of $X_{r,t}$, the map $\iota:\bigoplus J(H)\rightarrow\bigoplus_{X,H} J(H)$ has the form $\iota([\alpha_i])=[Y_i,\alpha_i]+[Y_{i+1},\alpha_i]$ for $i=1,\ldots,r-1$, $\iota([\alpha_r])=[Y_r,\alpha_r]+[Y_r,\alpha_1]$, and $\iota([\beta_i])=[Y_i,\beta_i]$.  Hence in $\mbox{coker}(\iota)$, we may disregard the generators corresponding to $[Y_i,\beta_i]$ and we can choose generators $[Y_1,\alpha_1],\cdots,[Y_r,\alpha_r]$ with $[Y_2,\alpha_1]=-[Y_1,\alpha_1]$, etc.  With this choice of basis, we determine that the map $\sum\overline{\psi_X}:\oplus D(\A_X,\m_X)\rightarrow \mbox{coker}(\iota)$ is given on $\theta\in D(\A_{Y_1},\m_{Y_1})$ by $\theta\rightarrow \overline{\theta}(\alpha_1)[Y_1,\alpha_1]+\overline{\theta}(\alpha_2)[Y_1,\alpha_2]=\overline{\theta}(\alpha_1)[Y_1,\alpha_1]-\overline{\theta}(\alpha_2)[Y_2,\alpha_2]$, where $\overline{\theta}(\alpha_i)=\theta(\alpha_i)/\alpha_i$ (and similarly for $\theta\in D(\A_{Y_i},\m_{Y_i}),i>1$).  Thus we may represent the map $\sum \overline{\psi}_X$ by the matrix
\[
\bordermatrix{&\theta_1  &\upsilon_1  & \theta_2 & \upsilon_2& \cdots &\theta_r & \upsilon_r \cr
[Y_1,\alpha_1]  &\overline{\theta}_1(x_1) & \overline{\upsilon}_1(x_1)  &  0 & 0 &\cdots & -\overline{\theta}_r(x_1) & -\overline{\upsilon}_r(x_1) \cr 
[Y_2,\alpha_2]  & -\overline{\theta}_1(x_2) & -\overline{\upsilon}_1(x_2) & \overline{\theta}_2(x_2) & \overline{\upsilon}_2(x_2) & \cdots & 0 & 0\cr 
[Y_3,\alpha_3]  & 0 & 0 & -\overline{\theta}_2(x_3) & -\overline{\upsilon}_2(x_3) & \cdots & 0 & 0 \cr
	\vdots 			   & \vdots & \vdots & \vdots & \vdots & \ddots &\vdots & \vdots \cr
[Y_r,\alpha_r] & 0 & 0 & 0 & 0 & \cdots & \overline{\theta}_{r}(x_r) & \overline{\upsilon}_r(x_r)
}
\]
Now, for $i=1,\ldots,r$, $D(\A_{Y_i})$ is generated by the derivations
\[
\begin{array}{l}
\theta_i=x_i\dfrac{\partial}{\partial x_i}+x_{i+1}\dfrac{\partial}{\partial x_{i+1}}\\
\upsilon_i=x_i^2\dfrac{\partial}{\partial x_i}+x_{i+1}^2\dfrac{\partial}{\partial x_{i+1}}
\end{array}
\]
for $i=1,\ldots,r-1$ and $D(Y_r)$ is generated by
\[
\begin{array}{l}
\theta_r=x_r\dfrac{\partial}{\partial x_r}+x_{1}\dfrac{\partial}{\partial x_{1}}\\
\upsilon_r=x_r^2\dfrac{\partial}{\partial x_r}+tx_1^2\dfrac{\partial}{\partial x_{1}}
\end{array}
\]
So the above matrix simplifies to
\[
M=
\bordermatrix{&\theta_1  &\upsilon_1  & \theta_2 & \upsilon_2& \cdots &\theta_r & \upsilon_r \cr
	[Y_1,\alpha_1]  &1 & x_1  &  0 & 0 &\cdots & -1 & -tx_1 \cr 
	[Y_2,\alpha_2]  & -1 & -x_2 & 1 & x_2 & \cdots & 0 & 0\cr 
	[Y_3,\alpha_3]  & 0 & 0 & -1 & -x_3 & \cdots & 0 & 0 \cr
	\vdots 			   & \vdots & \vdots & \vdots & \vdots & \ddots &\vdots & \vdots \cr
	[Y_r,\alpha_r] & 0 & 0 & 0 & 0 & \cdots & 1 & x_r
}
\]
Notice that in $\mbox{coker}(M)$, the Euler derivations $\theta_1,\ldots,\theta_r$ identify all basis elements $[Y_1,\alpha_1],\cdots,[Y_r,\alpha_1]$ to a single basis element.  Hence
\[
\mbox{coker}(M)\cong H^2(\cJ^\bullet)\cong \frac{S(-1)}{\langle x_1-x_2,x_2-x_3,\ldots,x_{r-1}-x_r,x_r-tx_1\rangle},
\]
where the $S(-1)$ encodes the fact that the degrees of $[Y_i,\alpha_i]$ are all one.  Since $t\neq 0,1$, $H^2(\cJ^\bullet)\cong S/\mathfrak{m}$, where $\mathfrak{m}$ is the maximal ideal of $S$.

Now, applying the snake lemma to the diagram in Figure~\ref{fig:H2Pres} and using the fact that $\iota$ is injective (see the proof of Proposition~\ref{prop:H2Pres}), we get the four-term exact sequence
\[
0\rightarrow D(X_{r,t}) \rightarrow \bigoplus\limits_{Y\in\lt} D((X_{r,t})_Y,\m_Y) \xrightarrow{M} S(-1)^\kappa \rightarrow H^2(\cJ(X_{r,t}))\rightarrow 0,
\]
where $S(-1)^{\kappa}=\mbox{coker}(\iota)$.  Above we noticed this prunes down to
\[
0\rightarrow D(X_{r,t}) \rightarrow S(-1)\oplus S(-2)^{r} \xrightarrow{T} S(-1) \rightarrow \dfrac{S}{\mathfrak{m}}\rightarrow 0,
\]
where $T=\begin{bmatrix} 0 & x_1-x_2 & \cdots & x_r-tx_1\end{bmatrix}$.  It follows that
\[
D(X_{r,t})\cong S(-1) \oplus K_2(\mathfrak{m})(-1),
\]
where $K_2(\mathfrak{m})(-1)$ is the module of second syzygies of $\mathfrak{m}$, twisted by $-1$.  It is well-known that $K_2(\mathfrak{m})$ has $\binom{r}{2}$ generators of degree $2$, so $D(X_{r,t})$ is generated by the Euler derivation along with $\dbinom{r}{2}$ generators of degree $3$.  Its minimal free resolution is given by truncating the Koszul complex at $K_2(\mathfrak{m})$, so it is linear of length $r-2$, the maximum possible.  Since the resolution is linear, $\reg(D(X_{r,t}))=3$, where $\reg$ denotes Castelnuovo-Mumford regularity.  This completes the proof of (3).
\end{proof}

\begin{remark}\label{rem:GeneralizedX3}
If $t\neq 1$, then the only non-boolean generic flats of $X_{r,t}$ are the obvious ones of rank two corresponding to the closed circuits of length three.  Hence the bound on $\mbox{pdim}(X_{r,t})$ given by Corollary~\ref{cor:pdimcircuit} is zero, while $\mbox{pdim}(X_{r,t})=r-2$.  If $t=1$ then we can see that $\beta_1,\ldots,\beta_r$ forms a  closed circuit of length $r$, in which case $\mbox{pdim}(D(X_{r,1},\m))\ge r-3$ by Corollary~\ref{cor:pdimcircuit}.  In fact, if we introduce the extra variable $x_0$ and change coordinates by the rule $x_i\to x_i-x_0$, we see that $X_{r,1}$ is the graphic arrangement corresponding to a wheel with $r$ spokes.  From~\cite[Example~7.1]{GSplinesGraphicArr}, $\mbox{pdim}(D(X_{r,1},\m))=r-3$ for any multiplicity $\m$.
\end{remark}

\section{The case of line arrangements}\label{sec:SyzygiesTeraoConj}

It is well-known that $D(\A)$ may be identified with the module of syzygies on the Jacobian ideal $\mbox{Jac}(\A)$ of the defining polynomial of $\A$; hence $\A$ is free if and only if $\mbox{Jac}(\A)$ is codimension two and Cohen-Macaulay.  In this section we show that, for rank three arrangements, $D(\A,\m)$ may be identified with potentially higher syzygies of a less geometric object.  We use this to give another formulation of Terao's conjecture for lines in $\mathbb{P}^2$.

First, suppose $\A$ is a $TF_2$ arrangement and consider the diagram in Figure~\ref{fig:H2Pres}.  Since $\iota$ is injective (see the proof of Proposition~\ref{prop:H2Pres}) and $H^1(\cJ^\bullet)\cong D(\A,\m)$, the full snake lemma applied to this diagram yields the exact sequence
\[
0\rightarrow D(\A,\m) \rightarrow \bigoplus\limits_{X\in\lt} D(\A_X,\m_X) \rightarrow S^{\kappa} \rightarrow H^2(\cJ^\bullet) \rightarrow 0,
\]
where the inclusion $D(\A,\m)\rightarrow \bigoplus D(\A_X,\m_X)$ is the sum of the restriction maps $D(\A,\m)\rightarrow D(\A_X,\m_X)$ (recall that the isomorphism $D(\A,\m)\cong H^1(\cJ)$ is given by the map $\psi(\theta)=\sum_{H\in L} \theta(\alpha_H)$).  By Theorem~\ref{thm:Free}, $D(\A,\m)$ is free if and only if
\[
0\rightarrow D(\A,\m) \rightarrow \bigoplus\limits_{X\in\lt} D(\A_X,\m_X) \xrightarrow{\sum\overline{\psi_X}} S^{\kappa}\rightarrow 0
\]
is a short exact sequence.  Hence if $D(\A,\m)$ is free we may identify it with the syzygies on a (necessarily non-minimal) set of generators for the free module $S^\kappa$.

Now suppose $\A$ is rank three, irreducible and totally formal but not $TF_2$, so $\cS^3(\A)=S_3(\A)\neq 0$.  We can set up (see Figure~\ref{fig:H2PresC}) a very similar diagram to the one in Figure~\ref{fig:H2Pres}.  All maps in the top two rows of Figure~\ref{fig:H2PresC} are the same as in Figure~\ref{fig:H2Pres}; in particular $\kappa=\sum_{X\in\lt}|\A_X|-|\A|$ just as in Proposition~\ref{prop:H2Pres}.  The chain complex $\cJ^\bullet(\A,\m)$ appears as the right-most column.  The map labeled $q$ is the quotient map.  The existence of the bottom right horizontal map $\Delta:\mbox{coker}(\iota)\rightarrow J_3(\A,\m)$ follows from the commutativity of the upper right square; furthermore $\Delta$ is surjective since $\delta^1_J$ and $\sum (\delta^1_J)_X$ are both surjective.  The lower left map $i:\ker(\Delta)\rightarrow S^\kappa$ is the inclusion and the map $\hat{q}$ is lifted from $q$ in the obvious way.

\begin{figure}
\centering
\begin{tikzcd}
& 	\bigoplus\limits_{i=1}^n J(H_i) \ar{r}{\cong} \ar{d}{\iota} & \bigoplus\limits_{i=1}^n J(H_i) \ar{d}{\delta^1_J}\\
\bigoplus\limits_{X\in\lt} D(\A_X,\m_X) \ar{d}{\hat{q}} \ar{r}{\sum \psi_X} & \bigoplus\limits_{X\in\lt}\left[ \bigoplus\limits_{H_i<X} J(H_i)\right]\ar{d}{q} \ar{r}{\sum (\delta^1_J)_X} & \bigoplus\limits_{X\in\lt} J_2(\A_X,\m_X)\ar{d}{\delta^2_J}\\
\ker(\Delta)\ar{r}{i} & \mbox{coker}(\iota)\cong S^\kappa \ar{r}{\Delta} & J_3(\A,\m)
\end{tikzcd}
\caption{Diagram for Proposition~\ref{prop:H2PresC}}\label{fig:H2PresC}
\end{figure}

\begin{prop}\label{prop:H2PresC}
Let $\A$ be an essential, irreducible, formal arrangement of rank $3$ which is not $TF_2$.  Then
\[
H^2(\cJ)\cong \mbox{coker}\left( \bigoplus\limits_{X\in\lt} D(\A_X,\m_X)\xrightarrow{\hat{q}} \ker(\Delta)\right).
\]
and $D(\A,\m)$ is free if and only if $\hat{q}$ is surjective.  Moreover, $D(\A,\m)$ is free if and only if
\[
0\rightarrow D(\A,\m) \rightarrow \bigoplus\limits_{X\in\lt} D(\A_X,\m_X) \xrightarrow{i\circ\hat{q}} S^\kappa 
\]
is exact in the first two positions and $\mbox{coker}(i\circ\hat{q})=J_2(\A,\m)$; i.e. the above sequence is a free resolution for $J_2(\A,\m)$.  Moreover, the left-most inclusion of $D(\A,\m)$ into $\bigoplus D(\A_X,\m_X)$ is given by the sum of natural restriction maps.
\end{prop}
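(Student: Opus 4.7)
The plan is to derive all three claims of Proposition~\ref{prop:H2PresC} from a single application of the snake lemma to the two lower rows of Figure~\ref{fig:H2PresC}, combined with the identification $H^1(\cJ^\bullet)\cong D(\A,\m)$ from Remark~\ref{rem:LES}. Since $\A$ is essential, formal, and of rank $3$, Remark~\ref{rem:DJiso} applies and Theorem~\ref{thm:Free} reduces freeness of $D(\A,\m)$ to the vanishing of $H^2(\cJ^\bullet)$.

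Concretely, I would apply the snake lemma to the commutative diagram formed by the middle and bottom rows of Figure~\ref{fig:H2PresC}, with vertical maps $\hat{q}$, $q$, $\delta^2_J$. Surjectivity of $q$ (a quotient onto $\mbox{coker}(\iota)\cong S^\kappa$) and of $\delta^2_J$ (Lemma~\ref{lem:ComputingJk}), together with injectivity of $\iota$ and of each $\psi_X$ (with $D(\A_X,\m_X)$ taken in the essential rank-$2$ incarnation used elsewhere in the paper; cf.\ the proof of Proposition~\ref{prop:Xr}), collapse the snake sequence to
\[
0\to\ker(\hat{q})\to\ker(q)\to\ker(\delta^2_J)\to\mbox{coker}(\hat{q})\to 0.
\]
Identifying $\ker(q)=\mbox{image}(\iota)\cong\bigoplus_{H\in\A}J(H)$ via $\iota$ makes the middle map equal to $\delta^1_J$, so I can read off the outer terms as $\ker(\hat{q})\cong\ker(\delta^1_J)=H^1(\cJ^\bullet)\cong D(\A,\m)$ and $\mbox{coker}(\hat{q})\cong H^2(\cJ^\bullet)$. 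This simultaneously yields the first isomorphism of the proposition and, via Theorem~\ref{thm:Free}, the equivalence of freeness of $D(\A,\m)$ with surjectivity of $\hat{q}$ onto $\ker(\Delta)$.

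For the ``moreover'', assuming $\hat{q}$ surjects onto $\ker(\Delta)$, the previous four-term sequence delivers exactness of
\[
0\to D(\A,\m)\to\bigoplus\limits_{X\in\lt}D(\A_X,\m_X)\xrightarrow{i\circ\hat{q}}S^\kappa\to J_3(\A,\m)\to 0,
\]
with the cokernel identified as $S^\kappa/\ker(\Delta)\cong\mbox{image}(\Delta)=J_3(\A,\m)$ using surjectivity of $\Delta$ (I read the ``$J_2(\A,\m)$'' in the statement as a slip for $J_3(\A,\m)$). The converse is immediate, since exactness at the middle position with $\mbox{coker}(\hat{q})=H^2(\cJ^\bullet)=0$ forces freeness by Theorem~\ref{thm:Free}. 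To identify the leftmost inclusion with the sum of natural restrictions, I would trace the snake isomorphism: $\theta\in D(\A,\m)$ maps under Remark~\ref{rem:LES} to $(\theta(\alpha_H))_H\in\ker(\delta^1_J)$; this lifts through $\iota$ to $\sum_{X,\,H<X}\theta(\alpha_H)[X,H]$, which is precisely $\sum_X\psi_X$ evaluated on the diagonal tuple $(\theta,\ldots,\theta)\in\bigoplus D(\A_X,\m_X)$, showing that the diagonal lies in $\ker(\hat{q})$ and, by the snake isomorphism, exhausts it. The main obstacle I anticipate is keeping the rank conventions for $D(\A_X,\m_X)$ straight: the identification of $\ker(\hat{q})$ with just $D(\A,\m)$ (as opposed to a larger module containing tangent derivations along each $X\in\lt$) requires taking $D(\A_X,\m_X)$ as its essential rank-$2$ quotient, rather than as the full ambient derivation module.
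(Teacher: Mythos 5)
Your proposal is essentially the paper's own proof: the paper packages the same computation as the long exact sequence attached to the columns of Figure~\ref{fig:H2PresC} (the middle column being exact since $\iota$ is injective and $q$ surjective), which is precisely your four-term snake sequence and yields the same identifications $\ker(\hat{q})\cong H^1(\cJ^\bullet)\cong D(\A,\m)$ and $\mbox{coker}(\hat{q})\cong H^2(\cJ^\bullet)$, together with $H^3(\cJ^\bullet)=0$, Theorem~\ref{thm:Free}, and the same diagram chase for the restriction-map description of the left-hand inclusion. Your two caveats are also correct and consistent with the paper: the proof (and the discussion following the proposition) works with $J_3(\A,\m)$, so the ``$J_2(\A,\m)$'' in the statement is a slip, and the identification of $\ker(\hat{q})$ with $D(\A,\m)$ does require taking $D(\A_X,\m_X)$ in its essential rank-two incarnation so that each $\psi_X$ is injective---a hypothesis the paper uses implicitly when it declares the columns a short exact sequence of complexes (and again in the examples, where only the two essential generators of each $D(\A_X,\m_X)$ appear).
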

\begin{proof}
The identification of $H^2(\cJ)$ with $\mbox{coker}(\hat{q})$ follows from a long exact sequence in homology.  More precisely, the rows of the diagram in Figure~\ref{fig:H2PresC} are all exact.  Hence we may view this diagram as a short exact sequence of chain complexes; the chain complexes are the columns of the diagram.  As we saw in the proof of Proposition~\ref{prop:H2Pres}, the map $\iota$ is injective so the middle column is exact.  Thus the long exact sequence in homology splits into three isomorphisms.  The first isomorphism yields $H^1(\cJ)\cong \ker(\hat{q})$; which we may read as $D(\A,\m)\cong \ker(\hat{q})$ ($H^1(\cJ)\cong D(\A,\m)$ since $\A$ is essential).  The second isomorphisms yields $H^2(\cJ)\cong \mbox{coker}(\hat{q})$, which is the first statement of the proposition.  The third isomorphism yields $H^3(\cJ)=0$.  Hence by Theorem~\ref{thm:Free}, $D(\A,\m)$ is free if and only if $H^2(\cJ)=0$, if and only if $\mbox{coker}(\hat{q})=0$.

If $\hat{q}$ is surjective (if and only if $D(\A,\m)$ is free), then $\mbox{im}(\hat{q})=\ker(\Delta)$; by our previous identification of $D(\A,\m)$ with $\ker(\hat{q})$ we have freeness of $D(\A,\m)$ if and only if the sequence
\[
0\rightarrow D(\A,\m) \rightarrow \bigoplus\limits_{X\in\lt} D(\A_X,\m) \xrightarrow{i\circ \hat{q}} S^\kappa \xrightarrow{\Delta} J_3(\A,\m) \rightarrow 0
\]
is exact.  Chasing the diagram in Figure~\ref{fig:H2PresC}, and using that the map $D(\A,\m)\rightarrow \bigoplus J(H)$ is given by $\psi(\theta)=\sum \theta(\alpha_H)$, yields that the left-most inclusion is given by the sum of natural restriction maps, so we are done.
\end{proof}

Given a matrix for $\Delta$ in the natural choice of basis, we can identify the columns of $\Delta$ with a (often non-minimal) set of generators for $J_3(\A,\m)$.  Thus $\mbox{ker}(\Delta)$ can be identified with syzygies on this set of generators, which we denote by $\syz(\Delta)$.  In this language, we have the following corollary.

\begin{cor}\label{cor:syzygeticCriterion}
$D(\A,\m)$ is free if and only if $\sum_{X\in\lt} (i\circ\hat{q})(D(A_X,\m_X))$ generates $\syz(\Delta)$.
\end{cor}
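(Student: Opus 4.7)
The plan is to derive Corollary~\ref{cor:syzygeticCriterion} as an essentially immediate consequence of Proposition~\ref{prop:H2PresC}. That proposition already establishes that freeness of $D(\A,\m)$ is equivalent to surjectivity of $\hat{q}:\bigoplus_{X\in\lt} D(\A_X,\m_X)\to\ker(\Delta)$. All that remains is to translate this surjectivity condition into the syzygetic language of the corollary.

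First, I would fix a matrix representation for $\Delta:S^\kappa\to J_3(\A,\m)$ with respect to the natural basis of $S^\kappa$ indexed by the generators remaining in $\mathrm{coker}(\iota)$ after the identifications made in constructing $\cJ^\bullet$. The columns of this matrix furnish a (possibly non-minimal) generating set for $J_3(\A,\m)$, and by definition the kernel $\ker(\Delta)$ coincides with the module of syzygies $\syz(\Delta)$ on this generating set, viewed as an $S$-submodule of $S^\kappa$. Since $i$ is just the inclusion $\ker(\Delta)\hookrightarrow S^\kappa$, the image $(i\circ\hat{q})(D(\A_X,\m_X))$ is canonically identified with $\hat{q}(D(\A_X,\m_X))\subseteq \syz(\Delta)$ for each $X\in\lt$. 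Because $\hat{q}$ is $S$-linear and $D(\A_X,\m_X)$ is an $S$-module, each of these images is an $S$-submodule of $\syz(\Delta)$, and their sum $\sum_{X\in\lt}(i\circ\hat{q})(D(\A_X,\m_X))$ is again an $S$-submodule of $\syz(\Delta)$.

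Surjectivity of $\hat{q}$ onto $\ker(\Delta)=\syz(\Delta)$ therefore translates precisely to the assertion that $\sum_{X\in\lt}(i\circ\hat{q})(D(\A_X,\m_X))$ equals, equivalently generates, $\syz(\Delta)$. Combining this with Proposition~\ref{prop:H2PresC} yields the corollary. There is no genuine obstacle here: the statement is essentially a repackaging of Proposition~\ref{prop:H2PresC} in the language of syzygies on the chosen presentation of $J_3(\A,\m)$. The only point that warrants care is the identification of $\ker(\Delta)$ with $\syz(\Delta)$ as $S$-modules, which is transparent once a basis of $S^\kappa\cong\mathrm{coker}(\iota)$ has been fixed.
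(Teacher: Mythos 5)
Your proposal is correct and follows exactly the route the paper intends: Corollary~\ref{cor:syzygeticCriterion} is stated as an immediate translation of Proposition~\ref{prop:H2PresC} once $\ker(\Delta)$ is identified with $\syz(\Delta)$ via the columns of a matrix for $\Delta$, which is precisely the identification you make. Your added remarks on $S$-linearity of $\hat{q}$ and $i$ being the inclusion are exactly the (implicit) content of the paper's one-line justification, so there is nothing to flag.
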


\begin{remark}
Proposition~\ref{prop:H2PresC} and Corollary~\ref{cor:syzygeticCriterion} generalize Theorem~3.16 and Corollary~6.3 of~\cite{A3MultiBraid}, where the corresponding statements are worked out for $A_3$ multi-braid arrangements.
\end{remark}

Now consider the case $\m\equiv 1$, which is the setting of Terao's question of whether freeness of $\A$ is combinatorial.  In this case a special role is again played by the Euler derivations in $D(\A_X)$.  In terms of corollary~\ref{cor:syzygeticCriterion}, Euler derivations represent syzygies of degree one, which in turn express redundant generators of $J_3(\A)$ (just like $J_2(\A)$, $J_3(\A)$ is generated in degree one).  Write $\overline{D}(\A)$ for $D(\A)$ modulo the summand generated by the Euler derivation.  Then, for $X\in\lt$, $\overline{D}(\A_X)\cong S(-|\A_X|+1)$, as a graded $S$-module.  Also write $e$ for the rank of the free module spanned by the image of the Euler derivations of $D(\A_X,\m_X)$ inside of $S^\kappa$.  Once we have pruned away the Euler derivations, the chain complex from proposition~\ref{prop:H2PresC} (written as a graded complex of $S$-modules) becomes
\begin{equation}\label{eq:1}
0\rightarrow \overline{D}(\A) \rightarrow \bigoplus\limits_{X\in\lt} S(-|\A_X|+1) \rightarrow S(-1)^{\kappa-e}\rightarrow J_3(\A)\rightarrow 0,
\end{equation}
and the first two maps are now \textit{minimal} (matrices for these maps will have no constants other than $0$).  Since it is shown in Proposition~\ref{prop:FreeTF2Arrangements} that freeness of $TF_2$ arrangements is combinatorial, Terao's question for line arrangements reduces to:

\begin{ques}[Terao's question for line arrangements]\label{ques}
If $\A$ is a line arrangement in $\mathbb{P}^2$ which is not $TF_2$, is exactness of the chain complex~\eqref{eq:1} combinatorial?
\end{ques}

\begin{exm}[$A_3$ braid arrangement]
For $\A=A_3$ braid arrangement defined by the forms $x,y,z,x-y,x-z,y-z$, $J_3(A_3)=\langle x,y,z,x-y,x-z,y-z \rangle$.  The $A_3$ arrangement has four triple points.  The image of the Euler derivations $D(\A_X)$, $X\in\lt$ inside of $S^\kappa=S^{12-6}=S^6$ has rank $3$, corresponding to the three redundant generators of $J_3(\A)$. Pruning off the Euler derivations yields the chain complex
\[
0\rightarrow \overline{D}(\A) \rightarrow S(-2)^4 \rightarrow S(-1)^3 \rightarrow J_3(\A)\rightarrow 0,
\]
which is exact since the Koszul syzygies among $x,y,z$ are obtained from the non-Euler derivations on $D(\A_X)$, $X\in\lt$.  This is not minimal since $D(\A)$ has a generator of degree $2$ which expresses a relation among the four non-Euler derivations around triple points.  Once this generator of degree $2$ is pruned off we obtain the Koszul complex resolving $J_3(\A)$, 
\[
0\rightarrow S(-3) \rightarrow S(-2)^3\rightarrow S(-1)^3 \rightarrow J_3(\A) \rightarrow 0.
\]
As expected, $D(\A)$ is free with exponents $1,2,3$ (the generators of degree $1,2$ were pruned off to produce the minimal resolution).
\end{exm}

\section{Concluding remarks}\label{sec:CR}
We have implemented construction of the chain complexes $\cJ^\bullet,\cS^\bullet,\cD^\bullet$ in Macaulay2.  Instructions for loading the functions and detailed examples may be found at \href{http://math.okstate.edu/people/mdipasq/}{http://math.okstate.edu/people/mdipasq/} under the Research tab.

So far, we have not studied the behavior of the chain complex $\cD^\bullet(\A,\m)$ under deletion and restriction. In particular, we have the following question.

\begin{ques}\label{ques:1}
Is there a short exact sequence of complexes $0\rightarrow\cD^\bullet(\A',\m')\rightarrow\cD^\bullet(\A,\m)\rightarrow \cD^\bullet(\A'',\m^*)\rightarrow 0$ corresponding to a triple $(\A',\A,\A'')$ of arrangements (in the sense of~\cite[Definition~1.14]{Arrangements}), where $\m^*$ is the Euler multiplicity~\cite{EulerMult}?
\end{ques}

The main difficulty here is to construct the maps between these chain complexes.  Constructing such maps would provide a tight relationship to the addition-deletion theorem of~\cite{EulerMult}.  We also are not aware of any relationships between the chain complex $\cD^\bullet(\A,\m)$ and the characteristic polynomial of $(\A,\m)$ or a supersolvable filtration of $\A$.


\appendix

\section{The moduli space of an arrangement}\label{app:Moduli}
In this appendix we briefly summarize the construction of the moduli space of a lattice over an algebraically closed field $\kk$.  Given the intersection lattice $L$ of some central arrangement $\A\subset V\cong\kk^\ell$ with $n$ hyperplanes,we obtain the \textit{moduli space} of $L$ in the following steps:
\begin{enumerate}
\item Fix an ordering $H_1,\ldots,H_n$ of the hyperplanes of $\A$.  Then each flat $X\in L$ can be identified with the tuple of integers $i_1,\ldots,i_j$ where $H_{i_s}<X$ for every $s=1,\ldots,j$.
\item Let $M$ be an $n\times\ell$ coefficient matrix of variables and $\kk[M]$ the polynomial ring in these variables.  The rows of $M$ correspond to the hyperplanes $H_1,\ldots,H_n$, in order.
\item Suppose the flat $X\in L_k$ is defined by hyperplanes $H_{i_1},\ldots,H_{i_j}$, with $j>k$.  Then the $(k+1)\times(k+1)$ minors of the submatrix of $M$ formed by the rows $i_1,\ldots,i_j$ must all vanish.  Let $I\subset\kk[M]$ be the radical of the ideal generated by all of these minors for all flats $X\in L$.
\item Now let $\mathcal{B}$ be the set of all possible tuples of $\ell$ hyperplanes which intersect in only the origin.  Each tuple in $\mathcal{B}$ gives rise to an $\ell\times\ell$ sub-matrix of $M$ whose determinant must not vanish.  Let $J$ be the principal ideal generated by the product of all of these determinants.
\item The quasi-affine variety $\mathcal{V}=\mathcal{V}(L)=V(I)\setminus V(J)\subset \mathcal{M}$, endowed with the Zariski topology, corresponds to coefficient matrices of hyperplane arrangements with intersection lattice $L$.
\item Since the correspondence between a coefficient matrix and a hyperplane arrangement is not one-to-one, the moduli space $\mathcal{M}(L)$ of $L$ is obtained from $\mathcal{V}(L)$ by quotienting out by the action of scaling rows of $M$ and a changing coordinates in $V$.
\end{enumerate}

A property of an arrangement $\A$ is \textit{combinatorial} if it can be determined from its lattice; equivalently if the property holds for all $\A'\in\mathcal{M}(L(\A))$.  One of the key open questions in the theory of arrangements (posed by Terao), is whether freeness of arrangements is combinatorial.  Yuzvinsky~\cite{YuzModuli} has shown that free arrangements with intersection lattice $L$ form a Zariski open subset of $\mathcal{M}(L)$.  It is not difficult to show that a similar condition holds for totally formal arrangements.

\begin{lem}\label{lem:GenericFormal}
If $\A$ is an essential and totally formal arrangement then $\mbox{rank}(\cS^i(\A))$ is determined by $L$ for every $i$.  Moreover, the set of essential totally formal arrangements with intersection lattice $L$ is a Zariski open set in $\mathcal{M}(L)$.
\end{lem}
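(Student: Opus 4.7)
The plan is to prove both claims in turn. For the rank claim, by Proposition~\ref{prop:Duality}, $\dim_\kk D_i(\A_X)_0 = \dim_\kk R_i(\A_X)$, so it suffices to show $\dim_\kk R_i(\A_X)$ is determined by $L$ for each $X\in L_i$; this is the content of Remark~\ref{rem:CombFreeObstFromFormality}. The argument proceeds by induction on $n = r(X)$. Observing that $\A_X$ is itself totally formal (since $(\A_X)_Y = \A_Y$ for $X \le Y$), Lemma~\ref{lem:HomologicalCharFormality} gives $H_i(\cR_\bullet(\A_X)) = 0$ for $1 \le i \le n-1$. One checks directly that $H_0(\cR_\bullet(\A_X)) \cong X^*$ has dimension $\ell - n$, and that $H_n(\cR_\bullet(\A_X)) = 0$ because the top term $\cR_n(\A_X) = R_n(\A_X)$ injects into $\cR_{n-1}(\A_X)$ as a kernel. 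Equating the Euler characteristic of $\cR_\bullet(\A_X)$ with the single nonzero homology dimension, separating the top term, and solving yields $\dim R_n(\A_X)$ in terms of $\dim R_j(\A_Y)$ for $r(Y) = j < n$, each of which is combinatorial by induction.

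For the openness claim, use Lemma~\ref{lem:SkDifferential} to present the differentials $\delta^k_S$ of $\cS^\bullet(\A)$ inductively, so that the matrix for $\delta^k_S$ arises as a basis for the relations among specified rows of $\delta^{k-1}_S$. The matrix for $\delta^0_S$ is just the coefficient matrix of $\A$, whose entries are polynomial functions on $\mathcal{M}(L)$. Inductively, the dimension of the relation space controlling the rows of $\delta^k_S$ is upper semicontinuous in $\A$, so the locus $U \subset \mathcal{M}(L)$ where every such dimension simultaneously achieves its (combinatorial) minimum is Zariski open. By the first part, $U$ contains every totally formal arrangement. On $U$ the terms of $\cS^\bullet(\A_X)_0 \cong \cR_\bullet(\A_X)^*$ have locally constant dimension and the differentials vary polynomially, so we obtain an honest family of chain complexes of $\kk$-vector spaces over $U$. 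Upper semicontinuity of cohomology dimensions then shows that each condition $H^i(\cS^\bullet(\A_X)_0) = 0$ is Zariski open in $U$; by Corollary~\ref{cor:HomologicalCharFormality}, total formality is the intersection of these conditions over the finitely many $X \in L$ and $1 \le i \le r(X)-1$, hence Zariski open in $U$ and therefore in $\mathcal{M}(L)$.

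The main technical subtlety is the polynomial variation of the $\delta^k_S$'s on $U$: a priori one must choose, at each inductive step, a basis for the space of relations among the appropriate rows of $\delta^{k-1}_S$, and such bases need not be globally polynomial in $\A$. This is resolved by the standard fact that the kernel of a family of matrices of locally constant rank is a locally trivial sub-bundle, so on a small enough Zariski open subset of $U$ one can choose polynomial bases. Since the condition of cohomology vanishing is invariant under change of basis, the local result glues to the global openness statement on $\mathcal{M}(L)$.
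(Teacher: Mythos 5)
Your first part is essentially the paper's argument: Proposition~\ref{prop:Duality} together with the Euler-characteristic induction of Remark~\ref{rem:CombFreeObstFromFormality}; your explicit handling of $H_0$ and of the top homology of $\cR_\bullet(\A_X)$ is a welcome bit of extra care, since $\A_X$ need not be essential.

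For the openness claim you take a semicontinuity/stratification route where the paper works at the generic point: it decomposes $\V(L)$ into irreducible components, views the inductively built differentials of Lemma~\ref{lem:SkDifferential} as matrices over the fraction field of each component, uses part one to see that the formality conditions are finitely many maximal-rank conditions (non-vanishing of finitely many minors, hence of finitely many rational functions), and then descends to $\mathcal{M}(L)$ using invariance under row scaling and coordinate change. Your version can be made to work, but two points need tightening. First, the step ``by the first part, $U$ contains every totally formal arrangement'' does not follow from part one alone: part one shows all totally formal arrangements share the same dimensions, not that these dimensions are minimal. The missing observation is rank--nullity: on the stratum where the level-$k$ data has the combinatorial dimensions, $\dim R_{k+1}(\A_X)=\dim\cR_k(\A_X)-\rk\pi_k(\A_X)\ge\dim\cR_k(\A_X)-\dim R_k(\A_X)$, with equality exactly when $\pi_k(\A_X)$ is surjective, i.e.\ exactly when the formality condition at $X$ holds. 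This shows the combinatorial value is the minimum on each stratum, that each successive stratum is open, and in fact that your $U$ is precisely the totally formal locus, so the final upper-semicontinuity-of-cohomology step is redundant (though harmless). Second, the entries of the coefficient matrix are regular functions on $\V(L)$, not on $\mathcal{M}(L)$; since total formality is invariant under row scaling and linear changes of coordinates, openness on $\V(L)$ does descend, but this needs to be said (the paper devotes a sentence to exactly this point).
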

\begin{proof}
The arrangement $\A$ is essential and totally formal if and only if $\cS^\bullet$ is exact (see Corollary~\ref{cor:HomologicalCharFormality}).  Since $\cS^k(\A)=\bigoplus_{X\in L_k} S^k(\A_X)$, it suffices to show inductively that $\rk(S^k(\A_X))$ is determined from $L(\A_X)$ for $k=\rk(X)$.  If $X\in L(\A)$ has rank one, then $\rk S^1(\A_X)=1$.  Now the result follows inductively on the rank of $\A_X$, using the Euler characteristic of $\cS^\bullet(\A_X)$.  See also Remark~\ref{rem:CombFreeObstFromFormality}.

Now decompose $\V(L)$ into its irreducible components $\V(L)=\V_1\cup\cdots\cup\V_k$;  algebraically, this corresponds to a prime decomposition $I=P_1\cap P_2\cdots \cap P_k$ (recall $I$ is radical) where $\V_i=V(P_i)\setminus V(J)$.  Fix a component $\V_i$ of $\V(L)$ and work in its coordinate ring $R=\kk[M]/P_i$.  In other words, we consider an arrangement $\A$ whose coefficient matrix has entries in the integral domain $R$.  By Lemma~\ref{lem:SkDifferential} the differentials of the chain complex $\cS^\bullet(\A)$ (equivalently the differentials of $\cR_\bullet$) are elements of the rational function field $K=\mbox{frac}(R)$.  By the first statement, we see that the conditions for $\A_X$ to be $k$-formal for every $2\le i\le r(X)-1$ and every $X\in L$ are finitely many maximal rank conditions on the differentials $\delta^i_{S,X}$ for $\cS^\bullet(\A_X)$.  Since maximal rank conditions are given by the non-vanishing of certain minors, this shows that there are finitely many rational functions in $K$ that should not vanish if $\A$ and all its closed sub-arrangements are to be $k$-formal for every $k$.  Lifting this back to $R$ by considering numerators and denominators gives the result for $\V(L)$.  Since the determinants in question are multi-homogeneous in the row variables and quotienting by coordinate changes amounts to determining a scalar value for certain variables, this descends to the moduli space $\mathcal{M}(L)$.
\end{proof}

\begin{remark}
For a rank three arrangement, the condition to be formal is expressed by the non-vanishing of a maximal rank minor of the $\delta^2_S$ differential.  Example~\ref{ex:ZieglerPair} shows that the ranks of the free modules in $\cS^\bullet(\A)$ are not combinatorial, and that the condition to be totally formal can be non-trivial.  For Example~\ref{ex:ZieglerPair}, it can be shown that, aside from the polynomials determining the lattice structure, there is a single irreducible quadratic in the coefficients of the forms of $\A$ whose non-vanishing determines formality.
\end{remark}

\section{Two lemmas for multi-arrangements of points in $\mathbb{P}^1$}\label{app:MultiPoints}
In this appendix we collect two simple lemmas for multi-arrangements of points in $\mathbb{P}^1$.  Lemma~\ref{lem:nn11exp} may also be found in~\cite{DerProjLine}, in slightly less generality.

\begin{lem}\label{lem:Boolean}
Suppose $(\A,\m)$ is a multi-arrangement of $k+2$ points in $\mathbb{P}^1$ defined by forms $\alpha_1,\ldots,\alpha_{k+2}$.  Suppose that, for some $1\le j\le k+2$, $\theta\in D(\A,\m)$ satisfies that $\theta(\alpha_j)=\alpha_j^{\m(\alpha_j)}$ (up to multiplication by a constant).  If $\A$ is not boolean, then $\theta(\alpha_i)\neq 0$ for all $i=1,\ldots,k+2$.
\end{lem}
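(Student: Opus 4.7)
The plan is to argue by contradiction, exploiting the fact that derivations in two variables which vanish on a linear form must be divisible by the ``perpendicular'' constant-coefficient derivation to that linear form.

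First I would fix coordinates $x,y$ on $V^*$ so that $S=\kk[x,y]$, and for each linear form $\alpha_i = a_ix + b_iy$ introduce the constant-coefficient derivation $\partial_i^\perp := -b_i\dx + a_i\dy$, which satisfies $\partial_i^\perp(\alpha_i)=0$ and $\partial_i^\perp(\alpha_m) = a_ib_m - a_mb_i =: c_{i,m}$, a nonzero scalar whenever $\alpha_i,\alpha_m$ are linearly independent (which is the case whenever they define distinct points of $\mathbb{P}^1$). Suppose now for contradiction that there is some $i\neq j$ with $\theta(\alpha_i)=0$. Writing $\theta = f\dx + g\dy$, the equation $\theta(\alpha_i)=a_if + b_ig = 0$ forces $(f,g) = h\cdot(-b_i,a_i)$ for a polynomial $h\in S$, i.e.\ $\theta = h\,\partial_i^\perp$.

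Next I would extract $h$ using the hypothesis on $\alpha_j$. Evaluating $\theta$ on $\alpha_j$ gives $\theta(\alpha_j) = c_{i,j}\,h$ with $c_{i,j}\neq 0$ (since $\alpha_i,\alpha_j$ are linearly independent), so the assumption $\theta(\alpha_j) = c\,\alpha_j^{\m(\alpha_j)}$ with $c\neq 0$ forces $h = c'\alpha_j^{\m(\alpha_j)}$ for a nonzero scalar $c'$. This is the key reduction: $\theta$ is pinned down up to scalar as $c'\alpha_j^{\m(\alpha_j)}\,\partial_i^\perp$.

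The hypothesis that $\A$ is not boolean finally enters: since $k+2\ge 3$, there exists an index $m$ distinct from both $i$ and $j$. For this $m$ we compute $\theta(\alpha_m) = c_{i,m}\,h = c_{i,m}c'\,\alpha_j^{\m(\alpha_j)}$, a nonzero scalar multiple of a pure power of $\alpha_j$. But membership $\theta(\alpha_m)\in\langle \alpha_m^{\m(\alpha_m)}\rangle$ would require $\alpha_m$ to divide $\alpha_j^{\m(\alpha_j)}$, which is impossible because $\alpha_j$ and $\alpha_m$ define distinct points in $\mathbb{P}^1$. This contradiction completes the argument.

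There is no serious obstacle here; the only subtle point is being careful that the $(k+2)\ge 3$ hypothesis is used precisely to produce a third form $\alpha_m$ whose ideal rules out a power of $\alpha_j$. In the boolean case $k=0$ this third form is unavailable and, indeed, derivations of the shape $\alpha_j^{\m(\alpha_j)}\partial_i^\perp$ genuinely lie in $D(\A,\m)$ and vanish on $\alpha_i$.
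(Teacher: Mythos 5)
Your argument is correct and is essentially the paper's proof in coordinate-free form: the paper changes coordinates so that $\alpha_j=x$ and $\alpha_i=y$, deduces $\theta=x^{d}\dx$, and then gets the same divisibility contradiction from a third form, which is exactly your factorization $\theta=c'\alpha_j^{\m(\alpha_j)}\partial_i^{\perp}$ specialized to those coordinates. Nothing further is needed.
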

\begin{proof}
Without loss of generality, suppose $\theta(\alpha_2)=0$ and $\theta(\alpha_1)=\alpha_1^{\m(\alpha_1)}$.  Changing coordinates, we may assume $\alpha_1=x$ and $\alpha_2=y$.  Write $\theta=f\dx+g\dy$ and let $d=\deg(\theta)$.  Since $\theta(x)=x^{d}$ and $\theta(y)=0$, $f=x^{d}$ and $g=0$.  Any other $\alpha_j$ has the form $x+a_jy$ for some non-zero constant $a_j$; thus we have $\theta(\alpha_j)=x^d$.  Since $\theta\in D(\A,\m)$, we must have $\alpha_j\mid x^d$, a contradiction unless $\A$ is boolean.
\end{proof}

\begin{lem}\label{lem:nn11exp}
	Let $n$ be a positive integer and $(\A,\m)$ a muli-arrangement of $k+2\le n+2$ points in $\mathbb{P}^1$ with $Q(\A,\m)=x^{n}y^{n}\prod_{i=1}^k(x-a_iy)$.  Then $(\A,\m)$ has exponents $(n,n+k)$ if and only if $a_1,\ldots,a_k$ are distinct $(n-1)st$ roots of a non-zero constant $\beta$.  In this case, the derivation of degree $n$ has the formula $\theta=x^n\dx+\beta y^n\dy$.
\end{lem}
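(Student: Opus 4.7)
The plan is to apply Saito's criterion (Proposition~\ref{prop:Saito}) to an explicit candidate basis. Since the degrees of any basis must sum to $|\m|=2n+k$, asserting that the exponents are $(n,n+k)$ is equivalent to the existence of a nonzero homogeneous derivation in $D(\A,\m)$ of degree $n$, together with the existence of a companion derivation of degree $n+k$ whose Saito determinant is $Q(\A,\m)$.

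For the ``only if'' direction, I would start with an arbitrary homogeneous $\theta=f\dx+g\dy\in D(\A,\m)$ with $\deg f=\deg g=n$. The conditions $\theta(x)\in\langle x^n\rangle$ and $\theta(y)\in\langle y^n\rangle$ combined with $\deg f=\deg g=n$ force $f=c_1 x^n$ and $g=c_2 y^n$ for scalars $c_1,c_2\in\kk$. The remaining conditions $\theta(x-a_iy)=c_1 x^n-c_2 a_i y^n\in\langle x-a_iy\rangle$ become, after substituting $x=a_iy$, the system of scalar equations $c_1 a_i^{n-1}=c_2$ for $i=1,\ldots,k$. Since the $k+2$ points are distinct we have $a_i\ne 0$, so $c_1=0$ forces $c_2=0$ and hence $\theta=0$; therefore $c_1\ne 0$, and setting $\beta:=c_2/c_1\ne 0$ (and rescaling) yields $\theta=x^n\dx+\beta y^n\dy$ with $a_i^{n-1}=\beta$ for all $i$. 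Distinctness of the $a_i$ is built in from the hypothesis on the points.

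For the ``if'' direction, given that the $a_i$ are distinct $(n-1)$-st roots of a nonzero $\beta$, I would propose the explicit pair
\[
\theta_1=x^n\dx+\beta y^n\dy,\qquad \theta_2=y^n\prod_{i=1}^k(x-a_iy)\,\dy.
\]
Membership $\theta_1\in D(\A,\m)$ is the reverse of the computation above: the key check $\theta_1(x-a_iy)=x^n-a_i\beta y^n$ vanishes modulo $x-a_iy$ precisely because $a_i^{n-1}=\beta$. Membership $\theta_2\in D(\A,\m)$ is immediate because $\theta_2(x)=0$, $\theta_2(y)$ is divisible by $y^n$, and $\theta_2(x-a_jy)=-a_j y^n\prod_i(x-a_iy)$ is divisible by $x-a_jy$. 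The Saito determinant of the coefficient matrix of $(\theta_1,\theta_2)$ is $x^n\cdot y^n\prod_i(x-a_iy)=Q(\A,\m)$, so Proposition~\ref{prop:Saito} gives freeness with exponents $(n,n+k)$, and the formula for the degree-$n$ derivation is exactly $\theta_1$.

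There is no serious obstacle here; the argument is a direct application of Saito's criterion once the degree bookkeeping pins down the form $f=c_1 x^n$, $g=c_2 y^n$. The only mildly nonobvious step is exhibiting the right companion $\theta_2$, but setting its $\dx$-coefficient to zero and letting the $\dy$-coefficient absorb all of $Q(\A,\m)/(x^n\cdot \text{constant})$ is forced by the shape of $\theta_1$.
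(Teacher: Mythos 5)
Your proposal is correct. The ``only if'' half is essentially the paper's argument: degree bookkeeping forces a degree-$n$ derivation to have the form $c_1x^n\dx+c_2y^n\dy$, and divisibility by $x-a_iy$ then gives $a_i^{n-1}=c_2/c_1$ for all $i$, with nonvanishing of $\beta$ coming from $a_i\neq 0$ (you assert $\beta\neq 0$ a little quickly --- the one-line justification is that $c_2=0$ would force $c_1a_i^{n-1}=0$, hence $c_1=0$ since $a_i\neq 0$ --- but the paper is equally terse on this point). Where you genuinely diverge is the ``if'' direction: the paper verifies only that $\theta=x^n\dx+\beta y^n\dy$ lies in $D(\A,\m)$, observes that no derivation of degree less than $n$ can exist, and then invokes the automatic freeness of rank-two multi-arrangements together with the fact that the exponents sum to $|\m|=2n+k$ to conclude the exponents are $(n,n+k)$; you instead exhibit the explicit companion $\theta_2=y^n\prod_i(x-a_iy)\,\dy$ and apply Saito's criterion (Proposition~\ref{prop:Saito}) to the pair. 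Your route is more self-contained, producing an explicit basis and not relying on Ziegler's rank-two freeness or the exponent-sum identity, at the modest cost of having to guess the second basis element; the paper's route is shorter given that this machinery is already set up in \S~\ref{sec:Preliminaries}. Either argument is complete.
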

\begin{proof}
	Suppose $a_1,\ldots,a_k$ are distinct $(n-1)st$ roots of a non-zero constant $\beta$.  Then it is straightforward to check that the derivation $\theta$ indicated in the statement of the lemma is in $D(\A,\m)$.  Clearly there can be no derivation of smaller degree, so $\theta$ is a generator of $D(\A,\m)$.  Since the exponents sum to $|\m|=2n+k$, $(\A,\m)$ has exponents $(n,n+k)$.  Now suppose that $(\A,\m)$ has exponents $(n,n+k)$.  Then there is a derivation $\theta=f\dx+g\dy\in D(\A,\m)$ of degree $n$, the smallest possible degree.  Up to constant multiple, we must have $f=x^n$ and $g=y^n$.  Dividing through by the coefficient on $x^n$, if necessary, we may assume $\theta=x^n\dx+\beta y^n\dy$, where $\beta$ is a constant.  Then $\theta(x-a_iy)=x^n-a_i\beta y^n$ is divisible by $x-a_iy$ if and only if $a_i^n-a_i\beta=0$, or $a_i^{n-1}=\beta$.  Since this holds for $i=1,\ldots,k$, we are done.
\end{proof}

\end{document}